\let\origsection=\section \def\section{\@ifstar{\origsection*}{\mysection}} 
\def\mysection{\@startsection{section}{1}\z@{.7\linespacing\@plus\linespacing}{.5\linespacing}{\normalfont\scshape\centering\S}}
\renewcommand{\PrintDOI}[1]{\doi{#1}}
\numberwithin{equation}{section}
\numberwithin{figure}{section}
\let\polishlcross=\l
\def\l{\ifmmode\ell\else\polishlcross\fi}
\def\paragraph#1{%
  \noindent\textbf{#1.}\enspace}
\let\emptyset=\varnothing
\let\setminus=\smallsetminus
\def\moverlay{\mathpalette\mov@rlay}
\def\mov@rlay#1#2{\leavevmode\vtop{   \baselineskip\z@skip \lineskiplimit-\maxdimen
   \ialign{\hfil$\m@th#1##$\hfil\cr#2\crcr}}}
\newcommand{\charfusion}[3][\mathord]{
    #1{\ifx#1\mathop\vphantom{#2}\fi
        \mathpalette\mov@rlay{#2\cr#3}
      }
    \ifx#1\mathop\expandafter\displaylimits\fi}
\DeclareFontFamily{U}  {MnSymbolC}{}
\DeclareSymbolFont{MnSyC}         {U}  {MnSymbolC}{m}{n}
\DeclareFontShape{U}{MnSymbolC}{m}{n}{
    <-6>  MnSymbolC5
   <6-7>  MnSymbolC6
   <7-8>  MnSymbolC7
   <8-9>  MnSymbolC8
   <9-10> MnSymbolC9
  <10-12> MnSymbolC10
  <12->   MnSymbolC12}{}
\DeclareMathSymbol{\powerset}{\mathord}{MnSyC}{180}
\theoremstyle{plain}
\newtheorem{thm}{Theorem}[section]
\newtheorem{theorem}[thm]{Theorem}
\newtheorem{lemma}[thm]{Lemma}
\newtheorem{corollary}[thm]{Corollary}
\newtheorem{proposition}[thm]{Proposition}
\newtheorem{problem}[thm]{Problem}
\newtheorem{conjecture}[thm]{Conjecture}
\newtheorem{question}[thm]{Question}
\newtheorem{thm-intro}{Theorem}[]
\newtheorem*{claim*}{Claim}
\newtheorem{claim}{Claim}[]
\theoremstyle{definition}
\newtheorem{definition}[thm]{Definition}
\newtheorem{observation}[thm]{Observation}
\newcommand{\no}[1]{}
\newcommand*{\Bidirected}[1]{\overset{\text{\tiny$\bm\leftrightarrow$}}{#1}}
\newcommand{\GB}{$\mathsf{GB}$}
\newcommand*{\Cut}[2]{\Fkt{\partial_{#1}}{#2}}
\newcommand*{\Fkt}[2]{#1\!\left( #2 \right)}
\newcommand*{\BigO}[1]{\mathcal{O}(#1)}
\author[Heuer, Steiner, Wiederrecht]{Karl Heuer \and Raphael Steiner \and Sebastian Wiederrecht}
\address[Heuer, Wiederrecht]{Technische Universit\"{a}t Berlin,
Institute of Software Engineering and Theoretical Computer Science,
Ernst-Reuter-Platz 7, 10587 Berlin, Germany}
\address[Steiner]{Technische Universit\"{a}t Berlin, Institute of Mathematics, Straße des 17. Juni 136, 10623 Berlin, Germany}
\email{\tt karl.heuer@tu-berlin.de}
\email{\tt steiner@math.tu-berlin.de}
\email{\tt sebastian.wiederrecht@tu-berlin.de}
\title[Even circuits in oriented matroids]{Even circuits in oriented matroids}
\subjclass[2010]{05B35, 05C20, 05C70, 05C75, 05C83, 05C85, 52C40}
\keywords{digraph, non-even, odd dijoin, oriented matroid, even directed circuit}
\begin{document}

\begin{abstract}
In this paper we generalise the even directed cycle problem, which asks whether a given digraph contains a directed cycle of even length, to orientations of regular matroids.
We define non-even oriented matroids generalising non-even digraphs, which played a central role in resolving the computational complexity of the even dicycle problem.
Then we show that the problem of detecting an even directed circuit in a regular matroid is polynomially equivalent to the recognition of non-even oriented matroids.
Our main result is a precise characterisation of the class of non-even oriented bond matroids in terms of forbidden minors, which complements an existing characterisation of non-even oriented graphic matroids by Seymour and Thomassen.
\end{abstract}

\maketitle








\section{Introduction}
\label{sec:intro}

Deciding whether a given digraph contains a directed cycle, briefly \textit{dicycle}, of even length is a fundamental problem for digraphs and often referred to as the \textit{even dicycle problem}.
The computational complexity of this problem was unknown for a long time and several polynomial time equivalent problems have been found~\cites{klee1984signsolvability, manber1986digraphs, thomassen1986sign, mccuaig2004polya}.
The question about the computational complexity was resolved by Robertson, Seymour and Thomas~\cite{robertson1999permanents} and independently by McCuaig~\cite{mccuaig2004polya} who stated polynomial time algorithms for one of the polynomially equivalent problems, and hence also for the even dicycle problem.

One of these polynomially equivalent problems makes use of the following definition.

\begin{definition}[\cite{seymour1987characterization}]
	Let $D$ be a digraph.
	We call $D$ \emph{non-even}, if there exists a set $J$ of directed edges in $D$ such that every directed cycle $C$ in $D$ intersects $J$ in an odd number of edges.
	If such a set does not exist, we call $D$ \emph{even}.
\end{definition}

Seymour and Thomassen proved that the decision problem whether a given digraph is non-even, is polynomially equivalent to the even dicycle problem.

\begin{theorem}[\cite{seymour1987characterization}]\label{seymthomequivalence}
	The problem of deciding whether a given digraph contains an even directed cycle, and the problem of deciding whether a given digraph is non-even, are polynomially equivalent.
\end{theorem}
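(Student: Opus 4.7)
The claim asserts the existence of polynomial-time Turing reductions in both directions.

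\emph{Easy halves.} A useful starting observation is that if $D$ has no even directed cycle, then $D$ is non-even, with certificate $J := E(D)$: every dicycle $C$ of $D$ then satisfies $|C \cap J| = |C|$, which is odd by assumption. Equivalently, every digraph that is not non-even must contain an even directed cycle. Thus a single oracle call in either direction already handles some instances for free: a non-even oracle returning ``no'' on $D$ certifies an even dicycle in $D$, and an even-dicycle oracle returning ``no'' on $D$ certifies that $D$ is non-even. The remaining task for each direction is to distinguish, inside the class of non-even digraphs, those that contain an even dicycle from those that do not.

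\emph{Completing the reductions.} To reduce non-even recognition to even-dicycle detection, I would iteratively decide whether each edge $e$ belongs to a candidate certifying set $J \subseteq E(D)$, fixing an edge ordering and using polynomially many auxiliary digraphs to test, via the even-dicycle oracle, whether each partial commitment can be extended consistently. A detected parity obstruction in some auxiliary digraph certifies that no valid $J$ exists, i.e.\ $D$ is not non-even, whereas a successful completion produces $J$ explicitly. Conversely, to reduce even-dicycle detection to non-even recognition, I would build from $D$ a polynomial-sized auxiliary digraph $\widehat D$ via a local gadget replacement at each edge, designed so that $\widehat D$ is non-even iff every dicycle of $D$ is odd; equivalently, $\widehat D$ fails to be non-even precisely when $D$ contains an even dicycle. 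A single oracle query on $\widehat D$ then settles the even-dicycle problem for $D$.

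\emph{Main obstacle.} The technical heart of the proof lies in designing the auxiliary digraphs so that parity conditions on their dicycles faithfully encode either the consistency of partial certifying sets (first reduction) or the all-dicycles-odd property (second reduction). Verifying that in each such construction the correspondence between non-even certificates, dicycle parities of $D$, and even dicycles of the auxiliary digraph is exact --- and that the constructions remain polynomial in size --- is the principal difficulty. Once the gadgets are specified, each reduction is a routine polynomial-time procedure invoking the opposing oracle.
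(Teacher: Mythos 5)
Your two ``easy halves'' are correct, but the substantive content of both reductions is missing, and the sketches you give for completing them point in the wrong direction. The engine behind the actual equivalence (the paper cites Seymour--Thomassen for the digraph statement and proves the matroid generalisation, \Cref{thm:oddjoinsandevencuts}, by the same method) is linear algebra over $\mathbb{F}_2$ on a \emph{directed cycle basis}: restricting to the edges that lie on directed cycles, the cycle space has a basis consisting of directed cycles (the digraph instance of \Cref{lemma:circuitspace}), one can always compute a set $J$ meeting every basis cycle oddly (\Cref{cor:matroidweighting}), and the parity lemma (\Cref{lemma:oddandeven}, leading to \Cref{proposition:matroidequivalence}) shows that the parity of $|C\cap J|$ is then controlled for \emph{all} directed cycles $C$ at once, because every directed cycle is an $\mathbb{F}_2$-sum of basis cycles and the relevant sums have odd length. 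None of this appears in your plan; you explicitly defer it as ``the main obstacle'', but it is precisely the proof.

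Concretely, your constructions are misplaced. For detecting an even dicycle with a non-even oracle you propose a local gadget $\widehat D$ with ``$\widehat D$ non-even iff every dicycle of $D$ is odd''; you give no such construction, the natural candidate $\widehat D=D$ fails (a single even dicycle is non-even), and it is unclear that any edge-local replacement can force every certificate to behave like $J=E(D)$, since non-evenness is a global condition on the cycle space. The known reduction needs no gadget: compute a directed cycle basis; if some basis cycle is even, answer yes; otherwise $J=E(D)$ meets every basis cycle oddly, and by \Cref{proposition:matroidequivalence} $D$ has an even dicycle iff $D$ is \emph{not} non-even, so a single oracle call on $D$ itself suffices. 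In the converse direction, the certificate $J$ is not found by oracle-guided bit-by-bit search over edges (you would have to argue such a search is sound and needs only polynomially many calls, which you do not); it is computed outright by solving the linear system $|B\cap J|\equiv 1\pmod 2$ over the basis cycles $B$, and the single oracle call is made on the digraph obtained from $D$ by subdividing every edge outside $J$, so that the length parity of each dicycle equals the parity of $|C\cap J|$; then $D$ is non-even iff that digraph has no even dicycle. So the subdivision-type gadget you invoke belongs to this direction, and it only does its job after $J$ has been produced by the cycle-basis computation.
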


Furthermore, Seymour and Thomassen~\cite{seymour1987characterization} characterised being non-even in terms of forbidden subgraphs.
Their result can be stated more compactly by formulating it in terms of forbidden butterfly minors, which is a commonly used notion in directed graph structure theory~\cites{johnson2001directed, packingdicycles, digridthm}, instead of forbidden subgraphs.
Before we state their result, let us define the notion of butterfly minors and fix another notation.

Given a digraph $D$, an edge $e \in E(D)$ is called \emph{butterfly-contractible} if it is not a loop and if it is either the unique edge emanating from its tail or the unique edge entering its head.
A \emph{butterfly minor} (sometimes also called \emph{digraph minor} or just \emph{minor}) of a digraph $D$ is any digraph obtained from $D$ by a finite sequence of edge-deletions, vertex-deletions and contractions of butterfly-contractible edges.

Note that the main idea behind the concept of a butterfly-contractible edge $e$ within a digraph $D$ is that every directed cycle in $D/e$ either equals one in $D$ or induces one in $D$ by incorporating~$e$.
This property does not necessarily hold if arbitrary edges are contracted.

For every $k \ge 3 $ let $\Bidirected{C}_k$ denote the symmetrically oriented cycle of length $k$ (also called \emph{bicycle}), i.\@e.\@ the digraph obtained from $C_k$ be replacing every edge by a pair of anti-parallel directed edges.

Now we can state the result of Seymour and Thomassen as follows.

\begin{theorem}[\cite{seymour1987characterization}]\label{thm:seymthomtheorem}
	A digraph $D$ is non-even if and only if no butterfly minor of $D$ is isomorphic to $\Bidirected{C}_k$ for some odd $k$.
\end{theorem}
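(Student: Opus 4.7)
The plan is to verify two facts: (a) for odd $k$, the bidirected cycle $\Bidirected{C}_k$ is itself even, and (b) non-evenness is preserved under butterfly minors. For (a), an odd-intersection witness $J$ must meet each of the $k$ antiparallel $2$-cycles in exactly one edge, forcing $|J| = k$; but the two length-$k$ Hamilton dicycles $C^+, C^-$ partition $E(\Bidirected{C}_k)$, so the sum of their (odd) $J$-intersections would equal $k$, contradicting $k$ odd. For (b), deletion is immediate. Butterfly contraction of $e = (u,v)$ (say with $u$ of out-degree one, the other case being symmetric) is the only subtle case: given a witness $J$ in $D$ with $e \in J$, I would replace it by $J' := J \triangle \{e\} \triangle \delta^-(u)$. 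This preserves the parity of $|C \cap J|$ for every dicycle $C$ of $D$ (dicycles through $u$ use $e$ and exactly one edge of $\delta^-(u)$, dicycles avoiding $u$ use neither) and satisfies $e \notin J'$, so $J'$ descends to a witness for $D/e$.

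\textbf{Backward direction, setup.} I would prove the contrapositive by induction on $|V(D)| + |E(D)|$: if $D$ is even, exhibit $\Bidirected{C}_k$ for some odd $k$ as a butterfly minor. First reduce to $D$ strongly connected, since non-evenness is a per-strong-component property, and delete edges lying on no dicycle. If $D$ has a butterfly-contractible edge $e$, then the same lifting argument used in (b) shows that $D/e$ is also even, induction yields the minor in $D/e$, and it lifts to $D$. Hence we may assume $D$ is strongly connected and every edge $(u,v)$ satisfies $\deg^+(u) \ge 2$ and $\deg^-(v) \ge 2$.

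\textbf{Main case and principal obstacle.} For this irreducible case, the plan is to fix an ear decomposition of $D$ beginning with a dicycle $C_0$, choose $J \cap C_0$ to be an arbitrary odd subset, and try to extend $J$ along the ears. Adding an ear $P$ closes new dicycles with the previous structure, each imposing a parity constraint on $J \cap P$; if all these constraints were consistent ear after ear, we would obtain a valid witness, contradicting evenness. Thus some ear must produce an inconsistency, witnessed by two dicycles $C, C'$ in $D$ whose parity constraints on $P$ disagree. The target is then to collapse the subdigraph $C \cup C'$, via deletions and butterfly contractions, to a bidirected cycle whose underlying cycle length equals $|C \triangle C'|/2$; the parity obstruction should force this length to be odd. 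The principal obstacle is to carry out this collapse entirely through \emph{butterfly}-contractions rather than arbitrary contractions, since a non-butterfly contraction can create spurious dicycles and destroy the extracted structure. One therefore has to choose the ear decomposition together with the pair $(C, C')$ minimally---for instance minimizing $|C \triangle C'|$---so that at every intermediate stage the edge to be contracted is indeed the unique out- or in-edge at one of its endpoints. This delicate book-keeping is where I expect the bulk of the work to lie, and it mirrors the ear-decomposition analysis in the original Seymour--Thomassen argument.
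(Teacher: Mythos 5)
First, a remark on scope: the paper does not prove this statement at all --- it is quoted from Seymour and Thomassen --- so there is no in-paper argument to compare against, and your proposal has to stand on its own. Your forward direction does: the parity count on $\Bidirected{C}_k$ (each digon forces exactly one edge of $J$, so $|J|=k$, while the two Hamiltonian dicycles partition the edge set into two sets of odd $J$-intersection) is correct, and the preservation of non-evenness under deletion and butterfly contraction via $J':=J\mathbin{\triangle}\{e\}\mathbin{\triangle}\delta^-(u)$ is sound; this is exactly the digraph special case of the paper's \Cref{prop:GBminorcontainment}. The preliminary reductions in the backward direction (per-strong-component, discarding edges on no dicycle, contracting butterfly-contractible edges, noting that evenness passes to $D/e$ because a witness for $D/e$ lifts to one for $D$) are also fine.

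The genuine gap is in the main case of the backward direction, and it is precisely the step you flag as the ``principal obstacle''. From an inconsistency in the ear-by-ear parity propagation you do not in general obtain merely \emph{two} dicycles with conflicting constraints: what an inconsistency yields (this is Prop.~3.2 of Seymour--Thomassen, the paper's \Cref{lemma:oddandeven}) is an odd-cardinality family of dicycles whose symmetric difference is empty, and reducing such a family to a pair $C,C'$ whose union has the shape of an odd bidirected cycle is not justified. Even granting two such dicycles, $C\cup C'$ may share many segments, and in your irreducible situation every edge has tail of out-degree at least $2$ and head of in-degree at least $2$, so \emph{no} edge of $D$ is butterfly-contractible to begin with; contractions only become available after suitable deletions, and which deletions keep the conflicting parity structure intact while creating butterfly-contractible edges is exactly the content of the theorem. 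Choosing the ear decomposition or the pair minimizing $|C\mathbin{\triangle} C'|$ is a plausible heuristic, but no argument is given that at each intermediate stage the edge to be contracted is the unique out-edge of its tail or the unique in-edge of its head, nor that the surviving bidirected cycle has odd length. Since this structural extraction is where essentially all of the difficulty of the Seymour--Thomassen theorem lies, the proposal proves the easy direction but not the hard one.
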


The main purpose of this work is to lift the even dicycle problem to oriented matroids, and to extend \Cref{seymthomequivalence} and partially \Cref{thm:seymthomtheorem} to oriented matroids as well.

\subsection{The Even Directed Circuit Problem in Oriented Matroids}\label{subsec:evendirectedcircuitproblem}
${}$
\vspace{6pt} \newline
\indent In this paper we view a matroid as a tuple $M=(E,\mathcal{C})$ consisting of a finite \emph{ground set} $E(M):=  E$ containing the \emph{elements} of $M$ and the family $\mathcal{C}$ of \emph{circuits} of $M$. 

In what follows we introduce a generalisation of the graph theoretic notion of being non-even to oriented matroids and state the main results of this work.
For our purposes, the most important examples of matroids are \emph{graphical matroids} and \emph{bond matroids}. 

Let $G=(V,E)$ be a graph.
The \emph{graphical matroid of $G$}, denoted by $M(G)$, is the matroid $(E,\mathcal{C})$ where the set $\mathcal{C}$ of circuits consists of all edge-sets of the cycles of $G$.
Analogously, the \emph{bond matroid of $G$} is $M^\ast(G)=(E,\mathcal{S})$ where $\mathcal{S}$ is the set of bonds (or minimal non-empty edge cuts) of $G$.
Note that $M(G)$ and $M^\ast(G)$ are the dual matroids of each another.

A matroid is called a \emph{graphic matroid}, resp.~a \emph{bond matroid} (also called \emph{cographic matroid}) if it is, respectively, isomorphic to the graphical or the bond matroid of some graph.

Digraphs can be seen as a special case of oriented matroids\footnote{For a formal and in-depth introduction of terms and notation used here please see \Cref{subsec:prelim}.} in the sense that every digraph $D$ has an associated oriented graphic matroid $M(D)$ whose signed circuits resemble the oriented cycles in the digraph $D$.
In this spirit, it is natural to lift questions concerning cycles in directed graphs to more general problems on circuits in oriented matroids.
The following algorithmic problem is the straight forward generalisation of the even dicycle problem to oriented matroids, and the main motivation of the paper at hand.
\begin{problem}\label{evencircuit1}
	Given an oriented matroid $\vec{M}$, decide whether there exists a directed circuit of even size in $\vec{M}$.
\end{problem}

Our first contribution is to generalise the definition of non-even digraphs to oriented regular matroids in the following sense.

\begin{definition}
	Let $\vec{M}$ be an oriented matroid.
	We call $\vec{M}$ \emph{non-even} if its underlying matroid is regular and there exists a set $J \subseteq E(\vec{M})$ of elements such that every directed circuit in $\vec{M}$ intersects $J$ in an odd number of elements. If such a set does not exist, we call $\vec{M}$ even.
\end{definition}

The reader might wonder why the preceding definition concerns only regular matroids.
This has several reasons.
The main reason is a classical result by Bland and Las Vergnas~\cite{orientability} which states that a binary matroid is orientable if and only if it is regular.
Hence, if we were to extend the analysis of non-even oriented matroids beyond the regular case, we would have to deal with orientations of matroids which are not representable over $\mathbb{F}_2$.
This has several disadvantages, most importantly that cycles bases, which constitute an important tool in all of our results, are not guaranteed to exist any more.
Furthermore, some of our proofs make use of the strong orthogonality property of oriented regular matroids\footnote{For a definition we refer to \Cref{subsec:prelim}}, which fails for non-binary oriented matroids.
Lastly, since \Cref{evencircuit1} is an algorithmic questions, oriented regular matroids have the additional advantage that they allow for a compact encoding in terms of totally unimodular matrices, which is not a given for general oriented matroids. 

The first result of this article is a generalisation of \Cref{seymthomequivalence} to oriented matroids as follows:

\begin{theorem}
\label{thm:oddjoinsandevencuts}
The problems of deciding whether an oriented regular matroid represented by a totally unimodular matrix contains an even directed circuit, and the problem of recognising whether an oriented regular matroid given by a totally unimodular matrix is non-even, are polynomially equivalent.
\end{theorem}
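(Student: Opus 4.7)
The plan is to establish polynomial-time equivalence via two reductions that pass through an auxiliary parametrised problem. Define problem~$(\star)$: given an oriented regular matroid $\vec{M}$ (presented as a TU matrix) together with $J \subseteq E(\vec{M})$, decide whether some directed circuit $C$ of $\vec{M}$ satisfies $|C \cap J| \equiv 0 \pmod 2$. Note that $(\star)$ subsumes the even directed circuit problem~$(A)$ by choosing $J = E(\vec{M})$.

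First I would reduce $(\star)$ to $(A)$ via a matroidal analogue of the edge-subdivision trick from \cite{seymour1987characterization}. For every element $e \in E(\vec{M}) \setminus J$, perform a series extension, realised at the matrix level by inserting a new row and a new column in a way that preserves total unimodularity and the orientation. In the extended oriented regular matroid $\vec{M}'$, each directed circuit $C$ of $\vec{M}$ lifts uniquely to a directed circuit of size $|C \cap J| + 2|C \setminus J|$, whose parity agrees with that of $|C \cap J|$. Consequently $\vec{M}'$ contains an even directed circuit if and only if $(\vec{M}, J)$ is a \textsc{yes}-instance of~$(\star)$, establishing that $(\star)$ and $(A)$ are polynomially equivalent.

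Next I would reduce non-even recognition~$(B)$ to~$(\star)$. Non-evenness asserts the existence of some $J$ making $(\vec{M}, J)$ a \textsc{no}-instance of~$(\star)$. To eliminate the existential quantifier over $J$, the plan is to construct from $\vec{M}$ a single oriented regular matroid $\widehat{\vec{M}}$, with TU representation of polynomial size, such that $\widehat{\vec{M}}$ has an even directed circuit if and only if $\vec{M}$ is even. The key algebraic input is the $\mathbb{F}_2$-Farkas characterisation: $\vec{M}$ is even if and only if some odd-size family of its directed circuits has characteristic vectors summing to zero in $\mathbb{F}_2^{E(\vec{M})}$. The matroid $\widehat{\vec{M}}$ would be constructed by combining $\vec{M}$ with a small parity-tracking gadget through a matroid sum that preserves regularity, using a fundamental cycle basis obtained from the TU representation as a polynomial-size generating set for the cycle space, so as to avoid enumerating the exponentially many directed circuits. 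For the converse reduction $(A) \leq (B)$, note that $\vec{M}$ has no even directed circuit precisely when $J = E(\vec{M})$ itself witnesses non-evenness; a symmetric use of the gadget above then extracts the general $(A) \leq (B)$ reduction.

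The main obstacle is the construction of $\widehat{\vec{M}}$. Encoding an existential statement over all candidate odd dijoins $J$ as a single even-circuit query, while keeping the matroid regular and the TU matrix of polynomial size, is delicate. I anticipate the argument leaning critically on the strong orthogonality property of oriented regular matroids to ensure that even directed circuits of $\widehat{\vec{M}}$ correspond exactly to odd-weight $\mathbb{F}_2$-dependencies among directed circuits of $\vec{M}$; once this correspondence is in place, the polynomial equivalence follows by composing the reductions above.
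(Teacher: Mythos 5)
Your first reduction (series-extending every element outside $J$ so that circuit parity in the new matroid equals the parity of $|C\cap J|$) is exactly the construction the paper uses for its matroid $\vec{M}'$, and that part is sound. The genuine gap is in the other half: you leave the elimination of the existential quantifier over $J$ to an unspecified ``parity-tracking gadget'' $\widehat{\vec{M}}$, and this is precisely where all the work lies. The paper does not build any such gadget. Instead it first restricts to the totally cyclic part $TC(\vec{M})$ (computable via a Farkas-type subroutine, \Cref{lemma:farkas-compute,cor:totallycyclic}), proves that a totally cyclic oriented regular matroid has a \emph{directed} circuit basis $\mathcal{B}$ together with a set $J$ meeting every basis circuit oddly, both computable in polynomial time (\Cref{lemma:circuitspace,lemma:compute_base_and_cover}), and then shows (\Cref{proposition:matroidequivalence}, via \Cref{lemma:oddandeven}) that $\vec{M}$ is non-even \emph{if and only if this one canonical $J$} hits every directed circuit oddly. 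That equivalence is what replaces your gadget: non-evenness becomes a single instance of your problem~$(\star)$ with a concrete, computable $J$, which your subdivision trick then converts into one even-circuit query. Note also that your appeal to ``a fundamental cycle basis obtained from the TU representation'' does not suffice: such a basis need not consist of directed circuits, and the existence of a directed circuit basis is a nontrivial fact requiring total cyclicity.

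Your sketch of the converse direction $(A)\le(B)$ has a second gap of the same nature. It is true that $\vec{M}$ has no even directed circuit exactly when $J=E(\vec{M})$ witnesses non-evenness, but the oracle for $(B)$ only reports whether \emph{some} $J$ exists; in general ``non-even'' is strictly weaker than ``no even directed circuit,'' so a single oracle call proves nothing. The paper bridges this by first computing the directed circuit basis of $TC(\vec{M})$ and checking whether some basis circuit is already even (then answer yes); only in the remaining case, where all basis circuits are odd, does \Cref{proposition:matroidequivalence} (with $J=E(TC(\vec{M}))$) guarantee that ``non-even'' and ``no even directed circuit'' coincide, so the oracle answer can be used. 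Without the directed-circuit-basis machinery, neither of your two reductions goes through as written.
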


\Cref{thm:oddjoinsandevencuts} motivates a structural study of the class of non-even oriented matroids, as in many cases the design of a recognition algorithm for a class of objects is based on a good structural understanding of the class.
In order to state our main result, which is a generalisation of \Cref{thm:seymthomtheorem} to graphic and cographic oriented matroids, we have to introduce a new minor concept.
We naturally generalise the concept of butterfly minors to regular oriented matroids, in the form of so-called \emph{generalised butterfly minors}.
\begin{definition}
	Let $\vec{M}$ be an orientation of a regular matroid $M$.
	An element $e \in E(\vec{M})$ is called \emph{butterfly-contractible} if there exists a cocircuit $S$ in $M$ such that $(S\setminus\{e\},\{e\})$ forms a signed cocircuit of $\vec{M}$.\footnote{For a definition of a signed (co)circuits see~\Cref{subsec:prelim}.}
	A \emph{generalised butterfly minor (\GB-minor for short)} of $\vec{M}$ is any oriented matroid obtained from $\vec{M}$ by a finite sequence of element deletions and contractions of butterfly-contractible elements (in arbitrary order).
\end{definition}

Note that the generalised butterfly-contraction captures the same fundamental idea as the initial one for digraphs while being more general:
Given a butterfly-contractible element $e$ of a regular oriented matroid~$\vec{M}$, we cannot have a directed circuit $C$ of $\vec{M} / e$ such that $(C, \{e\})$ is a signed circuit of $\vec{M}$ \footnote{In this case, $(C, \{e\})$ together with a signed cocircuit $(S\setminus\{e\},\{e\})$ would contradict the orthogonality property (see \Cref{subsec:prelim}, (\ref{equ:ortho})) for oriented matroids.}, and hence either $C$ or $C \cup \{e\}$ must form a directed circuit of $\vec{M}$.

Replacing the notion of butterfly minors by \GB-minors allows us to translate \Cref{thm:seymthomtheorem} to the setting of oriented matroids in the following way:

\begin{proposition}
\label{thm:forbiddengraphicminors}
An oriented graphic matroid $\vec{M}$ is non-even if and only if none of its \GB-minors is isomorphic to $M(\Bidirected{C}_k)$ for some odd $k \ge 3$.
\end{proposition}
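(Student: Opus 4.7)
The strategy is to reduce the proposition to the Seymour--Thomassen theorem (\Cref{thm:seymthomtheorem}) via the dictionary between oriented graphic matroids and digraphs. Write $\vec{M} = M(D)$ for a digraph $D$; since the directed circuits of $M(D)$ are precisely the edge sets of the directed cycles of $D$, a subset $J \subseteq E(D)$ witnesses non-evenness of $D$ if and only if it witnesses non-evenness of $\vec{M}$, so $\vec{M}$ is non-even if and only if $D$ is non-even. A short parity count also establishes that $M(\Bidirected{C}_k)$ is even for every odd $k \ge 3$: any candidate witness $J$ would have to contain exactly one edge from each of the $k$ digons, but then its intersection sizes with the two big $k$-cycles would sum to $k$, contradicting the requirement that both be odd.

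For the forward implication I would show that non-evenness is preserved under \GB-minors, from which the conclusion follows by the base fact just mentioned. Deletion is immediate: if $J$ witnesses non-evenness of $\vec{M}$, then $J \setminus \{e\}$ witnesses that of $\vec{M} \setminus e$, since the directed circuits of $\vec{M} \setminus e$ are precisely those of $\vec{M}$ avoiding $e$. For contraction of a butterfly-contractible element $e$ certified by a cocircuit $S$, the footnote following the definition implies that every directed circuit $C$ of $\vec{M}/e$ arises from a directed circuit of $\vec{M}$ equal either to $C$ or to $C \cup \{e\}$. When $e \notin J$, the parity of $|C \cap J|$ matches that of its parent, so $J$ itself works. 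When $e \in J$, I would first exploit the binary-matroid identity $|C \cap J| \equiv |C \cap (J \triangle S)| \pmod{2}$ (resting on $|C \cap S|$ being even for every circuit $C$ and cocircuit $S$) to replace $J$ by $J \triangle S$, a witness that now excludes $e$ because $e \in J \cap S$; the previous case then applies.

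For the converse, suppose $\vec{M} = M(D)$ is even. Then $D$ is even, so \Cref{thm:seymthomtheorem} furnishes a butterfly minor of $D$ isomorphic to $\Bidirected{C}_k$ for some odd $k$. I would translate the underlying sequence of butterfly operations into a sequence of \GB-operations on $M(D)$ yielding $M(\Bidirected{C}_k)$: edge and vertex deletions correspond directly to element deletions in the graphic matroid, so the crux is to verify that every digraph-butterfly-contractible edge is also generalised-butterfly-contractible in the graphic matroid. Concretely, if $e = (u,v)$ is the unique edge leaving $u$ in a (without loss of generality connected) digraph and $C$ is the component of $G - u$ containing $v$ in the underlying graph $G$, then $\partial_G(C)$ is a bond: $G[C]$ is connected by the choice of $C$, and $G[V \setminus C]$ is connected because $u$ is adjacent in $G$ to every component of $G - u$. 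Among the edges of $\partial_G(C)$, all of which are incident to $u$, only $e$ points from $V \setminus C$ to $C$ (as $e$ is $u$'s sole out-edge), while all others are in-edges of $u$ originating in $C$; hence $(\partial_G(C) \setminus \{e\}, \{e\})$ is the required signed cocircuit. The case of a unique incoming edge at $v$ is symmetric.

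The principal subtleties are the contraction step of the forward direction, where the naive choice $J \setminus \{e\}$ fails and one must invoke binary orthogonality via the $J \triangle S$ swap, and the connectivity check for the bond $\partial_G(C)$ that underpins the digraph-to-matroid translation of butterfly-contractibility.
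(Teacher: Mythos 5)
Your proposal is correct and follows essentially the same route as the paper: closure of non-evenness under \GB-minors using the witness $J$ (replaced by $J+S$ when $e\in J$), evenness of $M(\Bidirected{C}_k)$ for odd $k$, and, for the converse, a translation of butterfly minors of $D$ into \GB-minors of $M(D)$ by showing that butterfly-contractible edges yield butterfly-contractible matroid elements via an explicit directed bond at the tail (or head) of $e$. The only cosmetic differences are that you verify the contraction step by swapping to $J\triangle S$ up front using even circuit--cocircuit intersections in binary matroids, where the paper instead argues case-by-case with the strong orthogonality property of regular oriented matroids, and that you prove the base case for $\Bidirected{C}_k$ by a direct parity count rather than citing the Seymour--Thomassen characterisation.
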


As our main result, we complement \Cref{thm:forbiddengraphicminors} by determining the list of forbidden \GB-minors for cographic non-even oriented matroids.
We need the following notation:
For integers $m, n \ge 1$ we denote by $\vec{K}_{m,n}$ the digraph obtained from the complete bipartite graph $K_{m,n}$ by orienting all edges from the partition set of size $m$ towards the partition set of size $n$.

\begin{theorem}
\label{thm:forbiddencographicminors}
An oriented bond matroid $\vec{M}$ is non-even if and only if none of its \GB-minors is isomorphic to $M^\ast(\vec{K}_{m,n})$ for any $m, n \ge 2$ such that $m+n$ is odd.
\end{theorem}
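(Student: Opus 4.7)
The plan is to prove the equivalence in two directions. For the easy ``if'' direction, I would first verify that $M^*(\vec{K}_{m,n})$ is even whenever $m+n$ is odd. The directed circuits of $M^*(\vec{K}_{m,n})$ are precisely the vertex-stars $\delta(v)$, since any bond of $K_{m,n}$ both of whose sides meet both bipartition classes non-trivially contains edges going in both directions of $\vec{K}_{m,n}$. For a hypothetical odd dijoin $J$ one then obtains $|J|\equiv m\pmod 2$ by summing $|J\cap\delta(u)|\equiv 1$ over the $U$-side and $|J|\equiv n\pmod 2$ by summing over the $W$-side, contradicting oddness of $m+n$. Combined with the fact that non-evenness is preserved under GB-minors — deletion is clear by restricting $J$; for contraction at a butterfly-contractible $e$ witnessed by a cocircuit $S\ni e$, apply $\mathbb{F}_2$-orthogonality of circuits and cocircuits in the binary matroid to replace $J$ by $J\triangle S$ if necessary so that $e\notin J$, and then invoke the footnoted lifting property to conclude that $J$ remains an odd dijoin of $\vec{M}/e$ — this gives one direction of the theorem.

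For the converse, I would take an edge-minimum counterexample $\vec{G}$, so that every proper GB-minor of $\vec{M}^*(\vec{G})$ is non-even. Under the translation that deletion in $\vec{M}^*(\vec{G})$ is unconditional contraction in $\vec{G}$, and that contraction in $\vec{M}^*(\vec{G})$ is deletion in $\vec{G}$ restricted to edges lying on a dicycle — the latter coming from unfolding what it means for $(S\setminus\{e\},\{e\})$ to be a signed cocircuit when $S$ is a cycle of $G$ — I first reduce $\vec{G}$ to a directed acyclic multigraph. If $e=v_1v_2$ lies on a dicycle then the return-dipath of the dicycle forces $v_1$ and $v_2$ onto the same side of every dicut of $\vec{G}$, so contracting $e$ in $\vec{G}$ (a GB-deletion in $\vec{M}^*$) preserves the family of dicuts and hence the odd-dijoin property, producing a proper GB-minor that is still even, violating minimality. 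Hence $\vec{G}$ is a DAG. Over $\mathbb{F}_2$, the evenness of $\vec{M}^*(\vec{G})$ translates to the existence of an odd-sized family of dicuts $\delta(X_1),\dots,\delta(X_k)$ with $\triangle_i\delta(X_i)=\emptyset$. Using the identity $\delta(A)\triangle\delta(B)=\delta(A\triangle B)$ on the relevant connected component and possibly replacing $X_i$ by $V\setminus X_i$, I may arrange $\triangle_i X_i=V(\vec{G})$: each vertex lies in an odd number of the $X_i$ and each edge in an even number of the $\delta(X_i)$. Minimality then forces every edge to lie in an even number at least $2$ of these dicuts, since an edge in $0$ of them would allow the witness to descend to $\vec{G}/e$, contradicting that $\vec{G}/e$ has an odd dijoin.

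The structural step — which I expect to be the main obstacle of the proof — is to deduce from these constraints that $\vec{G}\cong\vec{K}_{m,n}$ for some $m+n$ odd with $m,n\ge 2$. The intended plan is to refine the witness in the minimal counterexample so that each $X_i$ is a singleton: when $|X_i|\ge 2$, acyclicity combined with the dicut property at $\delta(X_i)$ should produce an edge inside $X_i$ whose contraction preserves a suitably modified witness, contradicting minimality. Once each $X_i=\{v_i\}$, the ``even and $\ge 2$'' constraint forces both endpoints of every edge of $\vec{G}$ to belong to $\{v_1,\dots,v_k\}$, so $V(\vec{G})=\{v_1,\dots,v_k\}$; the dicut property at $\{v_i\}$ forces $v_i$ to be either a source or a sink, partitioning the vertex set into sources $U$ and sinks $W$ with $|U|+|W|=k$ odd; a final minimality argument rules out any missing edge between a source and a sink; and the cases $|U|=1$ or $|W|=1$ are excluded because $\vec{K}_{1,n}$ admits the trivial odd dijoin $J=E(\vec{K}_{1,n})$. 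I anticipate that the singleton-refinement step requires careful combinatorial case analysis combining the minimality, the acyclicity, and the parity structure of the witness, analogously to but more intricately than the graphic case of Seymour and Thomassen.
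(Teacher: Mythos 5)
Your first direction is fine and essentially matches the paper: the vertex-star parity argument correctly shows $\vec{K}_{m,n}$ has no odd dijoin when $m+n$ is odd, and your sketch of closure under \GB-minors (replacing $J$ by $J+S$ using binary orthogonality, then lifting circuits of $\vec{M}/e$) is a legitimate variant of \Cref{prop:GBminorcontainment}. The problem lies in the converse, at exactly the step you flag as the ``main obstacle''. Your plan is to show that a minimal counterexample digraph $\vec{G}$ is literally isomorphic to $\vec{K}_{m,n}$, by refining the witness family so that every $X_i$ is a singleton and concluding that every vertex is a source or a sink. This refinement cannot be forced by minimality alone: the \emph{odd diamonds} $\mathcal{D}(n_1,2,n_3)$ (two ``hub'' vertices joined to $n_1$ sources and $n_3$ sinks, $n_1+n_3$ odd) satisfy every constraint you have available at that point --- they are acyclic, transitively reduced, have no odd dijoin, admit a witness in which each edge lies in exactly two of the cuts, and every single contraction or admissible deletion produces a digraph \emph{with} an odd dijoin (\Cref{lemma:verifyobstructions}), so no further descent is possible. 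Yet their hubs are neither sources nor sinks, some witness sets $X_i$ necessarily have size at least two, and the digraph is not a one-direction. In other words, the digraph-level minimal obstructions are the three-layer digraphs of \Cref{prop:threenumberreduction}, not only two-layer ones; pinning down which of these lack odd dijoins requires the $T$-join parity analysis of \Cref{proposition:threenumbers}, and the diamonds only disappear from the final matroid statement because of the non-obvious isomorphism $M^\ast(\mathcal{D}(n_1,2,n_3)) \simeq M^\ast(\vec{K}_{2,n_1+n_3})$ (equivalently, planar duality with the odd bicycle $\Bidirected{C}_{n_1+n_3}$). Your outline never invokes this identification, so for a diamond-shaped candidate your intended contradiction with the no-forbidden-\GB-minor hypothesis never materialises; the paper needs precisely this extra family and the matroid isomorphism to pass from its digraph theorem (\Cref{thm:forbiddenminors}, which forbids odd diamonds \emph{and} odd one-directions as cut minors) to the stated list of forbidden \GB-minors.

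Two smaller points. First, your dictionary for \GB-minors of $M^\ast(\vec{G})$ is off: contraction in $M^\ast(\vec{G})$ corresponds to deleting an edge $e=(x,y)$ that admits a parallel directed $x$--$y$ path avoiding $e$ (a ``transitively reducible'' edge), not an edge lying on a directed cycle; unfolding $(S\setminus\{e\},\{e\})$ being a signed cocircuit puts $e$ \emph{against} the directed path, not on a dicycle with it. Your DAG reduction survives this slip because it only uses matroid deletion (unrestricted contraction in $\vec{G}$), but the deletion moves you would need later (e.g.\ to enforce completeness between layers, as in the paper's proof of \Cref{thm:forbiddenminors}) are exactly the transitive reductions. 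Second, when manipulating the witness you should keep the members directed \emph{bonds} rather than arbitrary directed cuts: the evenness criterion of \Cref{proposition:equivalence} is stated for bonds, and decomposing cuts into bonds can change the parity of the family, so this needs the same care the paper takes in \Cref{lemma:acycliccontract}.
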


To prove \Cref{thm:forbiddencographicminors} we study those digraphs whose oriented bond matroids are non-even.
Equivalently, these are the digraphs admitting an \emph{odd dijoin}, which is an edge set hitting every directed bond an odd number of times.
After translating \GB-minors into a corresponding minor concept on directed graphs, which we call \emph{cut minors}\footnote{See the beginning of \Cref{sec:dijoin} for a precise definition.}, we show that the class of digraphs with an odd dijoin is described by two infinite families of forbidden cut minors (\Cref{thm:forbiddenminors}).
Finally, we translate this result to oriented bond matroids in order to obtain a proof of \Cref{thm:forbiddencographicminors}.

The structure of this paper is as follows.
In \Cref{subsec:prelim} we introduce the needed notation and basic facts about digraphs, matroids and oriented matroids for this paper.
Furthermore, we prove that non-even oriented matroids are closed under \GB-minors (\Cref{prop:GBminorcontainment}), which is then used to prove \Cref{thm:forbiddengraphicminors} in the same section.
We start \Cref{sec:even-circuit complexity} by showing that the even directed circuit problem for general oriented matroids cannot be solved using only polynomially many calls to a signed circuit oracle~(\Cref{prop:oracleisexponential}).
The remainder of the section is devoted to the proof of \Cref{thm:oddjoinsandevencuts}. We also note that \emph{odd} directed circuits can be detected in polynomial time in orientations of regular matroids~(\Cref{prop:oddcycleproblem}).
In \Cref{sec:dijoin} we characterise those digraphs that admit an odd dijoin (\Cref{thm:forbiddenminors}) and use this to deduce our main result, \Cref{thm:forbiddencographicminors}.

\section{Background}\label{subsec:prelim}

This section is dedicated to a formal introduction of basic terms and notation used throughout this paper.
However, we assume basic familiarity with digraphs and matroid theory.
For basic notation and facts about digraphs we refer the reader to \cite{bang-jensen}.
For missing terminology and basic facts from matroid theory not mentioned or mentioned without proof in the following, please consult the standard reading \cites{oxley, welsh}.

For two sets $X, Y$ we denote by $X+Y:= (X \cup Y)\setminus (X \cap Y)$ their \emph{symmetric difference}. For $n \in \mathbb{N}$ we denote $[n]:=\{1,2,\ldots,n\}$.

\subsection*{(Di)graphs}

Graphs considered in this paper are multi-graphs and may include loops. 
Digraphs may have loops and multiple (parallel and anti-parallel) directed edges (sometimes called \emph{edges}).
Given a digraph $D$, we denote by $V(D)$ its vertex set and by $E(D)$ the set of directed edges.
A directed edge with tail $u \in V(D)$ and head $v \in V(D)$ is denoted by $(u,v)$ if this does not lead to confusion with potential parallel edges.
By $U(D)$ we denote the \emph{underlying multi-graph} of $D$, which is the undirected multi-graph obtained from $D$ by forgetting the orientations of the edges.
Given a digraph $D$ and a partition $(X,Y)$ of its vertex set, the set $D[X,Y]$ of edges with one endpoint in $X$ and one endpoint in $Y$, if it is non-empty, is referred to as a \emph{cut}. 
A cut of $D$ is called \emph{minimal} or a \emph{bond}, if there is no other cut of $D$ properly contained in it. 
If there is no edge of $D$ with head in $X$ and tail in $Y$, the cut $D[X,Y]$ is called \emph{directed} and denoted by $\Cut{}{X}$ (the set of edges leaving $X$).
A \emph{dijoin} in a digraph is a set of edges intersecting every directed cut (resp.~every directed bond).

\subsection*{Matroids}

Matroids can be used to represent several algebraic and combinatorial structures of dependencies.
The so-called \emph{linear or representable} matroids are induced by vector configurations in linear spaces.
Let $V=\mathbb{F}^n$ be a vector-space over a field $\mathbb{F}$ and let $X=\{x_1,\ldots,x_k\} \subseteq V$ for some $k \in \mathbb{N}$.
Let $A$ be the $n\times k$-matrix over $\mathbb{F}$ whose columns are $x_1,\ldots,x_k$.
Then we define the \emph{column matroid induced by $A$} as  $M[A]:= (\{x_1,\ldots,x_k\},\mathcal{C}_A)$, where its set of circuits $\mathcal{C}_A$ consists of the inclusion-wise minimal collections of linearly dependent vectors from $\{x_1,\ldots,x_k\}$.
It is a well-known fact that $M[A]$ is indeed a matroid for any choice of a matrix $A$.
A matroid $M$ is called \emph{$\mathbb{F}$-linear} or \emph{representable over the field $\mathbb{F}$} if there is a matrix $A$ with entries in $\mathbb{F}$ such that $M \simeq M[A]$.
Graphic matroids and bond matroids, as introduced in \Cref{subsec:evendirectedcircuitproblem}, form part of a larger class, the so-called \emph{regular matroids}.
A matroid $M$ is called regular if it is $\mathbb{F}$-linear for every field $\mathbb{F}$.
The following equivalent characterisation of regular matroids is useful for encoding purposes.
A matrix with entries in $\mathbb{R}$ is called \emph{totally unimodular} if every square submatrix has determinant $-1, 0$ or $1$.

\begin{theorem}[\cite{tutteregularchar}]\label{tutteregularity}
Let $M$ be a matroid. Then $M$ is regular if and only if $M \simeq M[A]$ for a totally unimodular real-valued matrix $A$. Furthermore, for any field $\mathbb{F}$, reinterpreting the $\{-1,0,1\}$-entries of $A$ as elements of $\mathbb{F}$, we obtain an $\mathbb{F}$-linear representation of $M$.
\end{theorem}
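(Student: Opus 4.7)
The plan is to prove the two directions separately, noting that one is routine and the other carries essentially all the content.

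For the direction ``totally unimodular representation $\Rightarrow$ regular'': Suppose $M \simeq M[A]$ for a totally unimodular real matrix $A$. A subset of the columns of $A$ is independent over $\mathbb{R}$ iff some square submatrix on those columns has nonzero determinant. Since every square submatrix of $A$ has determinant in $\{-1,0,1\}$, the non-vanishing pattern of minors is preserved under reduction modulo any prime $p$, and the integer entries already make sense over any characteristic-zero field. Hence the family of independent subsets of $M[A]$ is the same whether read over $\mathbb{R}$ or over any other field $\mathbb{F}$, giving representability of $M$ over every $\mathbb{F}$ and establishing the ``furthermore'' clause simultaneously.

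For the direction ``regular $\Rightarrow$ totally unimodular representation'': I would start from an $\mathbb{F}_2$-representation of $M$, which exists since regular matroids are in particular binary, and put it in standard form $[I \mid B]$ with $B \in \{0,1\}^{r \times (n-r)}$. The goal is to replace the $1$'s of $B$ by $\pm 1$'s to obtain a real matrix $A = [I \mid B']$ that is totally unimodular and whose column matroid still equals $M$. Signs would be chosen by picking an arbitrary first sign in each column and propagating through the fundamental circuits of the standard basis: the binary representation tells us which $\{0,1\}$-pattern each fundamental circuit has, and we orient the nonzero entries so that each fundamental circuit sums to zero over $\mathbb{R}$. A matrix obtained by such a consistent ``fundamental-circuit signing'' of a binary representation is well known to be totally unimodular, and conversely the row-span of a totally unimodular $[I \mid B']$ reduces mod $2$ to the original binary representation, so the matroid is preserved.

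The hard part, and in my view the main obstacle, is showing that such a consistent signing actually exists whenever $M$ is regular. The classical route is via Tutte's excluded-minor theorem characterising regular matroids as the binary matroids with no $F_7$ or $F_7^*$ minor: one argues that any obstruction to consistent signing would localise to a minor in which $F_7$ or $F_7^*$ arises, contradicting regularity. A more modern alternative is to invoke Seymour's decomposition theorem, which expresses every regular matroid as a $1$-, $2$-, or $3$-sum of graphic matroids, cographic matroids, and copies of $R_{10}$; each atom has a canonical totally unimodular representation (network matrices in the graphic case, their transposes in the cographic case, and an explicit matrix for $R_{10}$), and the sums glue these representations into a single totally unimodular matrix. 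Either route ultimately reduces the theorem to a deep structural classification of regular matroids, which is where the real work is concentrated.
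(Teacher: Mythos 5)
The paper offers no proof of this statement at all: it is quoted as a classical theorem of Tutte \cite{tutteregularchar} and used as a black box, so there is no in-paper argument to compare yours against. Your first direction (totally unimodular representation $\Rightarrow$ regular, including the ``furthermore'' clause) is complete and correct: every relevant subdeterminant lies in $\{-1,0,1\}$, so its image under the canonical map $\mathbb{Z}\to\mathbb{F}$ is zero over $\mathbb{F}$ exactly when it is zero over $\mathbb{R}$, hence the independent column sets of $A$ are the same over every field.

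For the converse, your plan is the standard one, but the signing condition as you state it has a genuine soft spot: requiring that ``each fundamental circuit sums to zero over $\mathbb{R}$'' is vacuous, because for \emph{any} signing $B'$ the matrix $[I\mid B']$ automatically has a linear dependency supported on each fundamental circuit (the column of a non-basis element is always a signed combination of the unit vectors indexed by its circuit). For instance, signing a $2\times 2$ all-ones block of $B$ as $\bigl(\begin{smallmatrix}1&1\\ 1&-1\end{smallmatrix}\bigr)$ passes your test, yet it destroys a two-element circuit of the binary matroid and is not totally unimodular. The correct formulation is Camion's signing: fix the signs on a spanning forest of the bipartite support graph of $B$, which forces all remaining signs, and the substantive claim is that for a regular $M$ the forced signing is totally unimodular and still represents $M$. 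That claim is precisely the deep content of Tutte's theorem, provable via the homotopy theorem, via Gerards' short excluded-minor argument with $F_7$ and $F_7^\ast$ (note that regular $\Rightarrow$ binary without these minors is the easy half, since both are representable only in characteristic $2$), or, with heavy machinery, via Seymour's decomposition together with the fact that $1$-, $2$- and $3$-sums preserve totally unimodular representability. So, like the paper, your proposal ultimately rests on citing this classical result rather than proving it --- which is reasonable here --- but the signing step should be stated correctly before the citation is invoked, since as written it does not constrain the signs at all.
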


Every graphic and every bond matroid is regular, but not vice-versa. 
Regular matroids are in turn generalised by the \emph{binary matroids}, which are the $\mathbb{F}_2$-linear matroids.

We conclude this paragraph with the important notion of \emph{matroid minors}, which generalises the concept of minors in graph theory.
Given a matroid $M$ and an element $e \in E(M)$, we denote by $M-e$ and $M/e$ the matroids obtained from $M$ by \emph{deleting} and \emph{contracting}~$e$.

These operations are consistent with deletions and contractions in graph theory in the following sense:
If $G$ is a graph and $e \in E(G)$, let us denote by $G/e$ the graph obtained by contracting the edge $e$ and by $G-e$ the graph obtained by deleting $e$.
Then it holds that ${M(G/e) \simeq M(G)/e}$, $M(G-e) \simeq M(G)-e, M^\ast(G-e)=M^\ast(G)/e$, and finally $M^\ast(G/e) \simeq M^\ast(G)-e$.

\subsection*{Oriented Matroids}

For missing terminology and basic facts from the theory of oriented matroids not mentioned or mentioned without proof in the following, please consult the standard reading~\cite{bibel}. 

An \emph{oriented matroid} $\vec{M}$ is a tuple $(E,\mathcal{C})$ consisting of a ground set $E$ of elements and a collection $\mathcal{C}$ of \emph{signed subsets} of $E$, i.\@e.\@ ordered partitions $(C^+,C^-)$ of subsets $C$ of $E$ into \emph{positive} and \emph{negative} parts such that the following axioms are satisfied:
\begin{itemize}
\item $(\emptyset,\emptyset) \notin \mathcal{C}$
\item If $(C^+,C^-) \in \mathcal{C}$, then $(C^-,C^+) \in \mathcal{C}$.
\item If $(C_1^+,C_1^-), (C_2^+,C_2^-) \in \mathcal{C}$ such that $C_1^+ \cup C_1^- \subseteq C_2^+ \cup C_2^-$, then one of the equations ${(C_1^+,C_1^-) = (C_2^+,C_2^-)}$ or ${(C_1^+,C_1^-) = (C_2^-,C_2^+)}$ holds.
    \item Let $(C_1^+,C_1^-), (C_2^+,C_2^-) \in \mathcal{C}$ such that $(C_1^+,C_1^-) \neq (C_2^-,C_2^+)$, and let ${e \in C_1^+ \cap C_2^-}$.
    Then there is some $(C^+,C^-) \in \mathcal{C}$ such that $C^+ \subseteq (C_1^+ \cup C_2^+) \setminus \{e\}$ and ${C^- \subseteq (C_1^- \cup C_2^-) \setminus \{e\}}$.
\end{itemize}
In case these axioms are satisfied, the elements of $\mathcal{C}$ are called \emph{signed circuits}.

Two oriented matroids $\vec{M}_1=(E_1,\mathcal{C}_1)$ and $\vec{M}_2=(E_2,\mathcal{C}_2)$ are called \emph{isomorphic} if there exists a bijection $\sigma:E_1 \rightarrow E_2$ such that $\{(\sigma(C^+),\sigma(C^-)) \; | \; (C^+,C^-) \in \mathcal{C}_1\}=\mathcal{C}_2$. 
For every oriented matroid $\vec{M}=(E,\mathcal{C})$ and a signed circuit $X=(C^+,C^-) \in \mathcal{C}$, we denote by $\underline{X}:= C^+ \cup C^-$ the so-called \emph{support} of $X$.
From the axioms for signed circuits it follows that the set family $\underline{\mathcal{C}}:= \{\underline{X} \; | \; X \in \mathcal{C}\}$ over the ground set $E$ defines a matroid $M=(E,\underline{\mathcal{C}})$, which we refer to as the \emph{underlying matroid} of $\vec{M}$, and vice versa, $\vec{M}$ is called an \emph{orientation} of $M$.
A matroid is called \emph{orientable} if it admits at least one orientation. 
A signed circuit $(C^+,C^-)$ is called \emph{directed} if either $C^+=\emptyset$ or $C^-=\emptyset$.
We use this definition also for the circuits of the underlying matroid $M$, i.\@e.\@, a circuit of $M$ is \emph{directed} in $\vec{M}$ if $(C,\emptyset)$ (or equivalently $(\emptyset,C)$) is a directed signed circuit of $\vec{M}$.
We say that $\vec{M}$ is \emph{totally cyclic} if every element of $M$ is contained in a directed circuit, and \emph{acyclic} if there exists no directed circuit. 

Classical examples of oriented matroids can be derived from vector configurations in real-valued vector spaces and, most importantly for the investigations in this paper, from directed graphs. 

Given a configuration $\{x_1,x_2,\ldots,x_k\} \in \mathbb{R}^n$ of vectors for some $k \in \mathbb{N}$, consider the matroid $M[A]$ with $A=(x_1,x_2,\ldots,x_k) \in \mathbb{R}^{n \times k}$.
Given a circuit $C=\{x_{i_1},x_{i_2},\ldots,x_{i_\ell}\} \in \mathcal{C}$, then there are scalars $\alpha_1,\ldots,\alpha_k \in \mathbb{R}^k\setminus \{\mathbf{0}\}$ such that $\sum_{\ell=1}^{k}{\alpha_ix_{i_\ell}}=0$, and the coefficients $\alpha_i$ are determined up to multiplication with a common scalar.
It is therefore natural to assign two signed sets to the circuit as follows: $X(C):= (C^+,C^-)$ and $-X(C):= (C^-,C^+)$, where $C^+:= \{x_{i_\ell} \; | \; \alpha_{i_\ell}>0\}$ and $C^-:= \{x_{i_\ell} \; | \; \alpha_{i_\ell}<0\}$.
The \emph{oriented matroid induced by $A$} is then defined as $\vec{M}[A]=(\{x_1,\ldots,x_k\},\{X(C),-X(C) \; | \; C \in \mathcal{C}_A\})$.

Given a digraph $D$ we can, as in the undirected case, associate with it two different kinds of oriented matroids with ground set $E(D)$.
Unsurprisingly, their underlying matroids are exactly the graphical respectively the bond matroid of $U(D)$.

\begin{definition}
Let $D$ be a digraph. 
\begin{itemize}
    \item For every cycle $C$ in $D$, let $(C^+,C^-),(C^-,C^+)$ be the two tuples describing a partition of $E(C)$ into sets of forward and backward edges, according to some choice of cyclical traversal of $C$. Then $\{(C^+,C^-),(C^-,C^+) \; | \; C \text{ cycle in }D\}$ forms the set of signed circuits of an orientation $M(D)$ of $M(U(D))$, called the \emph{oriented graphic matroid induced by $D$}.
    \item For every bond $S=D[X,Y]$ in $D$, let $S^+$ be the set of edges in $S$ with tail in $X$ and head in $Y$, and let $S^-$ contain those edges on $S$ with tail in $Y$ and head in $X$. Then $\{(S^+,S^-), (S^-, S^+) \; | \; S\text{ is a bond in }D\}$ forms the set of signed circuits of an orientation $M^\ast(D)$ of $M^\ast(U(D))$, called the \emph{oriented bond matroid induced by $D$}.
\end{itemize}
\end{definition}

Note that the directed circuits of an oriented graphic matroid are exactly the edge-sets of the directed cycles of the corresponding digraph $D$.
Similarly, the directed circuits in an oriented bond-matroid are the edge-sets of the directed bonds in the corresponding digraph.

Given an oriented matroid $\vec{M}=(E,\mathcal{C})$ and an element $e \in E(\vec{M})$, we denote by $\vec{M}-e$ and $\vec{M}/e$ the matroids obtained from $\vec{M}$ by \emph{deleting} and \emph{contracting} $e$, respectively.
The signed circuits of these matroids are defined as follows:
\begin{align*}
\mathcal{C}(M-e)&:= \{(C^+,C^-) \in \mathcal{C} \; | \; e \notin C^+ \cup C^-\},
\end{align*}
\begin{align*}
\mathcal{C}(M/e)&:= \{(C^+\setminus \{e\},C^-\setminus \{e\}) \; | \; (C^+,C^-) \in \mathcal{C}, \not\exists X \in \mathcal{C}: \underline{X}-e \subsetneq (C^+ \cup C^-)-e\}\setminus \{(\emptyset,\emptyset)\}.
\end{align*}
These definitions generalize to subsets $Z \subseteq E(\vec{M})$, here we denote by $\vec{M}-Z$ resp.~$\vec{M}/Z$ the oriented matroids obtained from $\vec{M}$ by successively deleting (resp.~contracting) all elements of $Z$ (in arbitrary order\footnote{It is well/known that the order in which elements are deleted resp.~contracted does not affect the outcome of the process.}). We also use the notation $\vec{M}[Z]=\vec{M}-(E(\vec{M}-Z))$.

Again, in the case of graphic and cographic oriented matroids, the deletion and contraction operations resemble the same operations in directed graphs:
Given a digraph $D$ and $e \in E(D)$, denote by $D/e$ the digraph obtained by deleting $e$ and identifying the endpoints of $e$.
We then have $M(D)-e \simeq M(D-e), M(D)/e \simeq M(D/e)$ and $M^\ast(D)-e \simeq M^\ast(D)/e, M^\ast(D)/e \simeq M^\ast(D-e)$.

For an oriented matroid $\vec{M}$ with a collection $\mathcal{C}$ of signed circuits, let $\mathcal{S}$ be defined as the set of signed vectors $(S^+,S^-)$ satisfying the following \emph{orthogonality property} for every signed circuit $C=(C^+,C^-) \in \mathcal{C}$:
\begin{equation*}\label{equ:ortho}
    (S^+ \cap C^+) \cup (S^- \cap C^-) \neq \emptyset \Longleftrightarrow (S^+ \cap C^-) \cup (S^- \cap C^+) \neq \emptyset. \tag{$\ast$}
\end{equation*}
Then $\mathcal{S}$ is called the set of \emph{signed cocircuits} of $\vec{M}$.
The supports of the signed cocircuits form exactly the cocircuits of the underlying matroid $M$.
A signed cocircuit $(S^+,S^-)$ is called \emph{directed} if $S^+=\emptyset$ or $S^-=\emptyset$.
If the underlying matroid $M$ of $\vec{M}$ is regular, then the following stronger orthogonality holds for every singed circuit $(C^+,C^-) \in \mathcal{C}$, and every signed cocircuit $(S^+,S^-) \in \mathcal{S}$:
\begin{equation*}\label{equ:strongortho}
    |C^+ \cap S^+|+|C^- \cap S^-| = |C^+ \cap S^-| + |C^- \cap S^+|. \tag{$\ast \ast$}
\end{equation*}
For any digraph $D$ the signed cocircuits of $M(D)$ are the same as the signed circuits of $M^\ast(D)$, while the signed cocircuits of $M^\ast(D)$ are exactly the signed circuits of $M(D)$.

We conclude this first part of the preliminary section by stating a couple of important facts concerning orientations of (regular) matroids from the literature.
\begin{theorem}[\cite{bibel}]\label{regularityOM}
Let $\vec{M}$ be an orientation of a regular matroid $M$. Then there exists a totally unimodular matrix $A$ such that $\vec{M} \simeq \vec{M}[A]$ and $M \simeq M[A]$.
\end{theorem}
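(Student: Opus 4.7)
The plan is to combine \Cref{tutteregularity} with the classical uniqueness theorem for orientations of binary matroids. Since the underlying matroid $M$ is regular, \Cref{tutteregularity} furnishes a totally unimodular matrix $A_0$ with $M[A_0] \simeq M$, and this matrix induces an oriented matroid $\vec{M}[A_0]$ whose underlying matroid is $M$, but which a priori may differ from the given orientation $\vec{M}$. The remaining task is to modify $A_0$ so as to realise precisely the signed circuits of $\vec{M}$, not just its unoriented circuits.

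Next, I would invoke the fundamental structural fact that any two orientations of a binary matroid (and every regular matroid is binary) are \emph{reorientation-equivalent}: there exists a subset $R \subseteq E(M)$ such that $\vec{M}$ is obtained from $\vec{M}[A_0]$ by flipping the sign of every element in $R$ inside each signed circuit. Identifying $E(\vec{M})$ with $E(\vec{M}[A_0])$ via the matroid isomorphism, this would be applied to produce the set $R$. The heart of this statement is that, in a binary matroid, once the signs on the fundamental circuits with respect to any fixed basis have been prescribed, the signed circuit elimination axiom combined with the strong orthogonality property (equation~\eqref{equ:strongortho}) uniquely determines the signs of all remaining circuits; hence the only freedom in choosing an orientation is precisely the choice of a reorientation set.

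Finally, I would translate the reorientation back to the matrix level: multiplying any column of $A_0$ by $-1$ merely negates each subdeterminant involving that column, hence preserves the $\{-1,0,1\}$-entry condition and the total unimodularity. Letting $A$ be the matrix obtained from $A_0$ by negating precisely the columns indexed by $R$, one checks directly from the definition of $\vec{M}[A]$ that reorienting the columns in $R$ flips exactly the sign patterns of minimal linear dependencies along those elements, so that $\vec{M}[A] \simeq \vec{M}$ while still $M[A] = M[A_0] \simeq M$. The main obstacle in a fully self-contained account would be the second step, i.e.\ proving uniqueness up to reorientation for binary matroids; the cleanest route is to quote it from \cite{bibel}, whereas a direct argument would proceed by induction on circuits, propagating sign choices from a fundamental system via signed circuit elimination and the parity identity forced by $\mathbb{F}_2$-linearity.
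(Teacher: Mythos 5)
The paper does not actually prove this statement---it is quoted verbatim from \cite{bibel} as a known result---so there is no internal argument to compare against; your proposal should therefore be judged as a reduction to the literature, and as such it is sound and follows the standard route. \Cref{tutteregularity} supplies a totally unimodular matrix $A_0$ with $M[A_0]\simeq M$; negating a set $R$ of columns of a totally unimodular matrix again yields a totally unimodular matrix and, since it negates the coefficient of each element of $R$ in every minimal linear dependence, realises exactly the reorientation of $\vec{M}[A_0]$ on $R$; hence everything hinges on the fact that all orientations of a regular matroid lie in a single reorientation class. That is precisely the Bland--Las Vergnas theorem \cite{orientability}, which the paper itself invokes elsewhere (in a footnote, for the graphic case), so quoting it, as you propose, puts your argument on the same footing as the paper's bare citation of \cite{bibel}. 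The only soft spot is your parenthetical sketch of that uniqueness statement: the claim that prescribing signs on the fundamental circuits of a basis and then propagating via signed elimination and the orthogonality identity (\ref{equ:strongortho}) determines all remaining signs is the actual content of the theorem and needs a genuine inductive argument (and one must also check that every consistent sign prescription on fundamental circuits differs from the given one by a reorientation); since you explicitly defer this step to the literature rather than rely on the sketch, the proposal stands.
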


We will also need the following matroidal version of the famous Farkas' Lemma:
\begin{theorem}[\cite{bibel}]\label{lemma:farkas}
Let $\vec{M}$ be an oriented matroid and $e \in E(M)$. Then $e$ is contained in a directed circuit of $\vec{M}$ if and only if it is not contained in a directed cocircuit.
\end{theorem}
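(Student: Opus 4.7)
The easy forward direction ($\Rightarrow$) is an immediate consequence of the orthogonality property $(\ast)$: if $e$ lies simultaneously in a directed signed circuit $(C,\emptyset)$ and a directed signed cocircuit $(S,\emptyset)$, then $(S^+ \cap C^+) \cup (S^- \cap C^-)$ contains $e$ and is thus nonempty, whereas the companion set $(S^+ \cap C^-) \cup (S^- \cap C^+)$ is empty because both negative parts $C^-$ and $S^-$ vanish. This contradicts $(\ast)$. (After possibly reorienting one of the two signed sets, the remaining case where the signs are "flipped" is handled identically.)

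The reverse direction ($\Leftarrow$) is the substance of the lemma. I would argue by contrapositive: assume $e$ lies in no directed cocircuit and construct a directed circuit through $e$. Note first that $e$ cannot be a coloop, for otherwise $(\{e\},\emptyset)$ would itself be a directed cocircuit. Hence the family of signed circuits $(C^+,C^-)$ with $e \in C^+$ is nonempty; among these, choose $X=(P,N)$ minimising $|N|$. If $N=\emptyset$ we are done. Otherwise, fix $f \in N$. The strategy is to locate a second signed circuit $Y=(Q,R)$ with $f \in Q$, $Q \cap N = \emptyset$, and $R \cap P = \emptyset$. Applying the signed circuit elimination axiom to $X$ and $Y$ at $f$ then yields a signed circuit $Z=(P',N')$ with $e \in P'$, $P' \subseteq (P \cup Q)\setminus\{f\}$, and $N' \subseteq (N \cup R)\setminus\{f\} \subseteq N \setminus\{f\}$, so $|N'| < |N|$, contradicting the minimality of $X$.

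The principal obstacle is to guarantee the existence of such a partner circuit $Y$. The idea is that if no such $Y$ exists, the signs of signed circuits through $f$ must align so consistently on $\{e\} \cup N$ that one can assemble from them a signed cocircuit containing $e$ positively and with empty negative part, i.e.\ a directed cocircuit through $e$, contradicting the standing assumption. Making this sign bookkeeping precise is the delicate step. The cleanest packaging is via Bland's 3-painting theorem, a standard consequence of the oriented matroid axioms proven in \cite{bibel}: colouring $e$ one shade and every other element of $E(\vec{M})$ another, the theorem asserts a dichotomy between a monochromatic signed circuit and a monochromatic signed cocircuit through $e$, which is exactly the alternative required. I expect this painting-lemma step (equivalently, the sign-coherent elimination bookkeeping) to be the technical heart of the proof; the remainder of the argument is then routine.
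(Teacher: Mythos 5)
The paper offers no proof of this statement at all; it is quoted directly from \cite{bibel} (it is the oriented-matroid Farkas lemma, an instance of the $3$-painting theorem there), so your attempt can only be judged on its own soundness. The forward implication via the orthogonality property $(\ast)$ is correct. The reverse implication, however, has genuine gaps. First, the elimination step is not justified as written: the circuit-elimination axiom quoted in the paper is \emph{weak} elimination and gives no guarantee that the resulting circuit $Z$ contains $e$ at all; you need the strong elimination property (a theorem in \cite{bibel}, not the axiom). Even granting strong elimination, your bookkeeping fails: from $R\cap P=\emptyset$ you only get $N'\subseteq (N\cup R)\setminus\{f\}$, and the claimed inclusion $(N\cup R)\setminus\{f\}\subseteq N\setminus\{f\}$ would require $R\subseteq N$, which nothing in your choice of $Y$ guarantees; hence $|N'|<|N|$ does not follow and the minimality contradiction evaporates. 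Second, the existence of the partner circuit $Y$ -- which you correctly identify as the heart of the matter -- is precisely where the hypothesis that $e$ lies in no directed cocircuit must be used, and no argument is given; establishing it is essentially equivalent to the theorem itself.

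The fallback to the painting theorem would indeed settle the lemma, and invoking \cite{bibel} is in the same spirit as the paper, but your instantiation is wrong: painting $e$ one colour and all remaining elements another yields, in the standard formulation, only the trivial dichotomy ``$e$ lies on some circuit or $e$ is a coloop'' (respectively ``$e$ is a loop or lies on some cocircuit''), not the directed circuit versus directed cocircuit alternative. The correct specialisation paints \emph{every} element with the single sign-constrained colour (all of $E(\vec{M})$ ``red''), which is exactly the Farkas-lemma corollary of the painting theorem; and once that result is invoked at full strength, the entire minimisation-and-elimination scaffolding preceding it becomes redundant.
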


\subsection{Non-Evenness and \GB-minors}\hfill\\
\indent Our main result, \Cref{thm:forbiddencographicminors}, builds on the important fact that the non-even oriented matroids are closed under the \GB-minor relation.
In this subsection we present a proof of this fact and use it to derive \Cref{thm:forbiddengraphicminors} from \Cref{thm:seymthomtheorem}.

\begin{lemma}\label{prop:GBminorcontainment}
	Every \GB-minor of a non-even oriented matroid is non-even.
\end{lemma}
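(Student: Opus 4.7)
The plan is to verify non-evenness is preserved under the two operations defining a \GB-minor: deletion of an arbitrary element and contraction of a butterfly-contractible element. Regularity is obviously preserved under both (regular matroids are closed under minors), so the only content is producing, from a witness $J$ for $\vec{M}$, a witness $J'$ for the minor.

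First I would handle the deletion step, which is essentially trivial. If $J \subseteq E(\vec{M})$ is such that every directed circuit of $\vec{M}$ meets $J$ in an odd number of elements and $e \in E(\vec{M})$, then every directed circuit of $\vec{M}-e$ is in particular a directed circuit of $\vec{M}$ not containing $e$, so $J' := J \setminus \{e\}$ works.

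The contraction step is the main point. Assume $e$ is butterfly-contractible, witnessed by a signed cocircuit $(S \setminus \{e\}, \{e\})$ of $\vec{M}$, and let $J$ witness non-evenness of $\vec{M}$. The key preliminary observation I will use is the parity fact for regular (equivalently, binary) matroids: any circuit $C$ and cocircuit $S$ of a regular matroid intersect in an even number of elements. This follows directly from the strong orthogonality property (\ref{equ:strongortho}): writing $|C \cap S|$ as a sum of four intersection sizes and applying (\ref{equ:strongortho}) to $(C^+,C^-)$ and $(S^+,S^-)$, one gets $|C \cap S| = 2(|C^+\cap S^+|+|C^-\cap S^-|)$. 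Consequently, replacing $J$ by the symmetric difference $J + S$ preserves the parity of $|J \cap C|$ for every circuit $C$ of $\vec{M}$, because $|(J+S) \cap C| \equiv |J \cap C| + |S \cap C| \pmod 2$. Hence, after possibly replacing $J$ with $J + S$, we may assume without loss of generality that $e \notin J$.

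Now set $J' := J$ (viewed as a subset of $E(\vec{M}/e) = E(\vec{M})\setminus\{e\}$, which is legitimate since $e\notin J$). Let $C'$ be any directed circuit of $\vec{M}/e$. By the property of butterfly contraction recalled in the footnote after the definition of \GB-minors, either $C'$ or $C' \cup \{e\}$ is a directed circuit of $\vec{M}$ (and these two possibilities are mutually exclusive by incomparability of circuits). In the first case $|C' \cap J'| = |C' \cap J|$ is odd by assumption on $J$; in the second case $|(C'\cup\{e\}) \cap J|$ is odd, and since $e\notin J$ we again obtain $|C' \cap J'| = |(C'\cup\{e\})\cap J|$ odd. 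Thus $J'$ certifies that $\vec{M}/e$ is non-even. Iterating the two operations finishes the proof. The only genuinely delicate point is the symmetric-difference move in the contraction step, which is precisely where the regularity hypothesis (hard-wired into the definition of non-evenness) is used.
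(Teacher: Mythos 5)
Your proof is correct, and it follows the same overall skeleton as the paper's: reduce to the two elementary steps (arbitrary deletion, butterfly contraction), keep $J$ itself for deletion, and for contraction work with the signed cocircuit $(S\setminus\{e\},\{e\})$ and the witness $J+S$ when $e\in J$, ruling out the case that $(C',\{e\})$ is a signed circuit via orthogonality (\ref{equ:ortho}). Where you diverge is in how the parity bookkeeping is done. The paper keeps the original $J$ (or $J+S$) and verifies oddness circuit by circuit in $\vec{M}/e$, which forces two separate applications of orthogonality: $(\ast)$ to get $S\cap C=\emptyset$ when $C$ itself is a directed circuit of $\vec{M}$, and the strong form $(\ast\ast)$ to compute $|(C\cup\{e\})\cap(S\setminus\{e\})|=1$ when $C\cup\{e\}$ is. You instead extract from $(\ast\ast)$ the classical fact that in a regular (binary) matroid every circuit meets every cocircuit evenly, use it once to see that $J\mapsto J+S$ preserves the parity of $|J\cap C|$ for \emph{all} circuits, and thereby normalise to $e\notin J$ before touching $\vec{M}/e$; after that the two surviving cases are immediate. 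This buys a shorter and less case-laden verification (no need to locate the single element $f\in C\cap S$), at the cost of invoking the even-intersection property as a standing lemma; the paper's version is more self-contained in that it only ever applies $(\ast)$ and $(\ast\ast)$ to the specific pairs at hand. One small presentational point: you justify the dichotomy ``$C'$ or $C'\cup\{e\}$ is a directed circuit of $\vec{M}$'' by citing the footnote following the definition of \GB-minors; in a self-contained write-up you should unfold that footnote (as the paper's proof does), i.e.\ note that by the definition of contraction the only remaining possibility is that $(C',\{e\})$ is a signed circuit of $\vec{M}$, which together with $(S\setminus\{e\},\{e\})$ violates (\ref{equ:ortho}).
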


\begin{proof}
	It suffices to show the following two statements:
	For every non-even oriented matroid $\vec{M}$ and every element $e \in E(\vec{M})$, the oriented matroid $\vec{M}-e$ is non-even as well, and for every element $e \in E(\vec{M})$ which is butterfly-contractible, the oriented matroid $\vec{M}/e$ is non-even as well.
	The claim then follows by repeatedly applying these two statements.
	
	For the first claim, note that since the underlying matroid $M$ of $\vec{M}$ is regular, so is the underlying matroid of $\vec{M}-e$.
	Let $J \subseteq E(\vec{M})$ be a set of elements intersecting every directed circuit in $\vec{M}$ an odd number of times.
	Then clearly the set $J \setminus \{e\}$ intersects every directed circuit in $\vec{M}-e$ an odd number of times, proving that $\vec{M}-e$ is non-even.
	
	For the second claim, let $e \in E(\vec{M})$ be butterfly-contractible.
	Let $S$ be a cocircuit of $M$ such that $(S\setminus \{e\},\{e\})$ forms a signed cocircuit of $\vec{M}$.
	Then the underlying matroid of $\vec{M}/e$ is a matroid minor of the regular matroid $M$ and is hence regular.
	Define $J' \subseteq E(\vec{M}) \setminus \{ e \}$ via
\begin{align*}
J' := 
\begin{cases}
J  & \mbox{if } e \notin J \\
J+S  & \mbox{if } e \in J.
\end{cases}
\end{align*}
	We claim that for every directed circuit $C$ in $\vec{M}/e$, the intersection $C \cap J'$ is odd.
	Indeed, by definition either $C$ is a directed circuit also in $\vec{M}$ not containing $e$, or $C \cup \{e\}$ is a directed circuit in $\vec{M}$, or $(C,\{e\})$ is a signed circuit of $\vec{M}$.
	The last case however is impossible, as then the signed circuit $(C,\{e\})$ and the signed cocircuit $(S\setminus \{e\},\{e\})$ in $\vec{M}$ would yield a contradiction to the orthogonality (\ref{equ:ortho}) of oriented matroids.
	
	In the first case, since $e \notin C$, we must have $S \cap C=\emptyset$ as otherwise again $C$ and the signed cocircuit $(S\setminus \{e\},\{e\})$ form a contradiction to the orthogonality property (\ref{equ:ortho}).
	This then shows that indeed $|C \cap J'|=|C \cap (J' \setminus S)|=|C \cap (J \setminus S)|=|C \cap J|$ is odd, as required.
	
	In the second case, the orthogonality property (\ref{equ:strongortho}) of regular oriented matroids applied with the directed circuit $C \cup \{e\}$ and the signed cocircuit $(\{e\},S\setminus \{e\})$ within $\vec{M}$ yield that the equation ${|(C \cup \{ e \}) \cap (S \setminus \{e\})| = |(C \cup \{ e \}) \cap \{e\}|=1}$ holds.
	So let $C \cap S = \{f\}$ for some element $f \in E(\vec{M}) \setminus \{ e \}$.
	By definition of $J'$, if $e \notin J$, then we have ${|C \cap J'|=|C \cap J|=|(C \cup \{e\}) \cap J|}$, which is odd.
	If $e \in J$, then we have (modulo $2$) 
	\begin{align*}
	|C \cap J'|=|C \cap (J+S)|=|(C \cap J)+(C \cap S)| \equiv |C \cap J|+|\{f\}| = |(C \cup \{e\}) \cap J|,
	\end{align*}
	which is odd.
	Hence, we have shown that $|C \cap J'|$ is odd in every case, which yields that $\vec{M}/e$ is a non-even oriented matroid.
	This concludes the proof.
\end{proof}

\Cref{prop:GBminorcontainment} allows us to immediately prove the correctness of \Cref{thm:forbiddengraphicminors}.

\begin{proof}[Proof of \Cref{thm:forbiddengraphicminors}]
	We prove both directions of the equivalence.
	Suppose first that $\vec{M}$ is non-even.
	Then by \Cref{prop:GBminorcontainment} every oriented matroid isomorphic to a \GB-minor of $\vec{M}$ is non-even as well.
	Hence it suffices to observe that none of the matroids $M(\Bidirected{C}_k)$ for odd $k \ge 3$ is non-even.
	However, this follows directly since any element set $J$ in $M(\Bidirected{C}_k)$ intersecting every directed circuit an odd number of times corresponds to an edge set in $\Bidirected{C}_k$ intersecting every directed cycle an odd number of times, which cannot exist since by \Cref{thm:seymthomtheorem} none of the digraphs $\Bidirected{C}_k$ is non-even for an odd $k \ge 3$.
	
	Vice versa, suppose that no \GB-minor of $\vec{M}$ is isomorphic to $M(\Bidirected{C}_k)$ for any odd $k \ge 3$.
	Let $D$ be a digraph such that $\vec{M} \simeq M(D)$.
	We claim that $D$ must be non-even. Suppose not, then by \Cref{thm:seymthomtheorem} $D$ admits a butterfly minor isomorphic to $\Bidirected{C}_k$ for some odd $k \ge 3$.
	We now claim that $M(D)$ has a \GB-minor isomorphic to $M(\Bidirected{C}_k)$.
	For this, it evidently suffices to verify the following general statement:
	
	If an edge $e$ of a digraph $F$ is butterfly-contractible in $F$, then within $M(F)$ the corresponding element $e$ of $M(F)$ is butterfly-contractible.
	
	Indeed, let $e=(u,v)$ for distinct vertices $u, v \in V(D)$.
	Then by definition either $u$ has out-degree $1$ or $v$ has in-degree $1$ in $D$.
	In the first case, $e$ is the unique edge leaving $u$ in the cut $D[\{u\},V(D)\setminus \{u\}]$, while in the second case $e$ is the only edge entering $v$ in the cut $D[V(D)\setminus \{v\},\{v\}]$.
	Since every cut is an edge-disjoint union of bonds, we can find in both cases a bond containing $e$ where $e$ is the only edge directed away resp.~towards the side of the bond that contains $u$ resp.~$v$.

	Since the oriented bonds in $D$ yield the signed cocircuits of $M(D)$, this shows that there is a cocircuit $S$ in $M(D)$ such that $(S\setminus \{e\},\{e\})$ is a signed cocircuit.
	Hence, $e$ is a butterfly-contractible element of $M(D)$.
	This shows that $M(\Bidirected{C}_k)$ is isomorphic to a \GB-minor of $M(D) \simeq \vec{M}$ which contradicts our initial assumption that no \GB-minor of $\vec{M}$ is isomorphic to $M(\Bidirected{C}_k)$.
	Hence, $D$ is non-even, and there exists $J \subseteq E(D)$ such that every directed cycle in $D$ contains an odd number of edges from $J$.
	The same set $J$ also certifies that $\vec{M} \simeq M(D)$ is non-even, and this concludes the proof of the equivalence.
\end{proof}

\section{On the Complexity of the Even Directed Circuit Problem}
\label{sec:even-circuit complexity}

The formulation of \Cref{evencircuit1} is rather vague, as it is not clear by which means the oriented matroid $\vec{M}$ is given as an input to an algorithm designed for solving the problem, and in which way we will measure its efficiency.
For the latter, it is natural to aim for an algorithm which performs a polynomial number of elementary steps in terms of the number of elements of $\vec{M}$.
This also resembles the even dicycle problem in digraphs, where we aim to find an algorithm running in polynomial time in $|E(D)|$.

For the former, it is not immediately clear how to encode the (oriented) matroid, and hence how to make information contained in the (oriented) matroid available to the algorithm.
For instance the list of all circuits of a matroid, if given as input to an algorithm, will usually have exponential size in the number of elements, and therefore disqualify as a good reference value for efficiency of the algorithm.
For that reason, different computational models (and efficiency measures) for algorithmic problems in matroids (see \cite{oraclesmatroids}) and oriented matroids (see \cite{oraclesOMs}) have been proposed in the literature.
These models are based on the concept of \emph{oracles}.
For a family $\mathcal{F} \subseteq 2^{E(M)}$ of objects characterising the matroid $M$, an \emph{oracle} is a function $f:2^{E(M)} \rightarrow \{\texttt{true}, \texttt{false}\}$ assigning to every subset a truth value indicating whether or not the set is contained in $\mathcal{F}$.
If $\mathcal{F}$ for instance corresponds to the collection of circuits, cocircuits, independent sets, or bases of a matroid, we speak of a \emph{circuit-, cocircuit-, independence-}, or \emph{basis-oracle}.
 Similarly, for oriented matroids we can define several oracles~\cite{oraclesOMs}.
 Maybe the most natural choice for an oriented matroid-oracle for \Cref{evencircuit1} is the \emph{circuit oracle}, which given any subset of the element set together with a $\{+, -\}$-signing of its elements, reveals whether or not this signed subset forms a signed circuit of the oriented matroid.
 This computational model applied to \Cref{evencircuit1} yields the following question.

\begin{question}\label{circuitoracle}
	Does there exist an algorithm which, given an oriented matroid $\vec{M}$, decides whether there exists a directed circuit in $\vec{M}$ of even size, by calling the circuit-oracle of $\vec{M}$ only $\BigO{|E(\vec{M})|^c}$ times for some $c\in\mathbb{N}$?
\end{question}

However, as it turns out, the answer to the above problem is easily seen to be negative, even when the input oriented matroid $\vec{M}$ is graphic. 

\begin{proposition}\label{prop:oracleisexponential}
	Any algorithm deciding whether a given oriented graphic matroid on $n$ elements, for some $n \in \mathbb{N}$, contains an even directed circuit must use at least $2^{n-1}-1$ calls to the circuit-oracle for some instances.
\end{proposition}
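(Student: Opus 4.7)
The plan is to use a standard circuit-oracle adversary argument with a family $\mathcal{F}$ of $2^n-1$ oriented graphic matroids on the ground set $[n]$, one for each non-empty subset $S$ of $[n]$. For such an $S$, I would take $G_S$ to be the graph with $n$ labelled edges consisting of a single cycle on the edges labelled by $S$ (a loop if $|S|=1$) together with $n-|S|$ bridges carrying the remaining labels, and let $\vec{M}_S := M(D_S)$ be the oriented graphic matroid obtained from a consistent cyclic orientation $D_S$ of that cycle. By construction, the signed circuits of $\vec{M}_S$ are exactly $(S,\emptyset)$ and $(\emptyset,S)$, both of which are directed. Hence $\vec{M}_S$ has an even directed circuit if and only if $|S|$ is even (which forces $|S|\ge 2$), which produces exactly $2^{n-1}-1$ yes-instances in $\mathcal{F}$ and $2^{n-1}$ no-instances.

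The adversary I would use simply answers \emph{no} to every oracle query. Since for $S\neq S'$ the supports of the signed circuits of $\vec{M}_S$ and $\vec{M}_{S'}$ are distinct, each \emph{no} answer is consistent with every member of $\mathcal{F}$ except possibly a single $\vec{M}_C$, namely the one for which the queried signed set belongs to $\{(C,\emptyset),(\emptyset,C)\}$. In particular each query eliminates at most one candidate. Consequently, after any $k\le 2^{n-1}-2$ rounds of queries the pool of candidates consistent with all answers still contains at least one yes-instance (as fewer than $2^{n-1}-1$ of the $2^{n-1}-1$ yes-instances can have been eliminated) and at least one no-instance. No matter which output the algorithm produces at this point, the adversary can commit to a remaining candidate of the opposite type and certify that the algorithm has erred. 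Hence any correct algorithm must perform at least $2^{n-1}-1$ oracle calls on some instance drawn from $\mathcal{F}$, which is exactly the claimed lower bound.

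The argument is elementary and I do not anticipate any real obstacle. The only point to be checked explicitly is the identification of the signed circuits of $\vec{M}_S$, which is immediate from the definition of $M(D_S)$ because the only cycle of $G_S$ has edge set $S$ and has been oriented consistently in $D_S$; the remaining combinatorial bookkeeping (the count of yes- and no-instances, and the fact that distinct candidates have disjoint signed-circuit sets) is entirely routine.
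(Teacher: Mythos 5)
Your proposal is correct and is essentially the paper's own argument: an adversary that answers every circuit-oracle query negatively, combined with the count of the $2^{n-1}-1$ non-empty even supports, so that an algorithm making at most $2^{n-1}-2$ calls cannot distinguish a yes-instance from a no-instance realised by oriented cycles-plus-bridges. The only cosmetic difference is that you keep odd-circuit matroids as the no-instances, whereas the paper compares the unqueried even support against the circuit-free oriented graphic matroid; the substance is the same.
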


\begin{proof}
Suppose towards a contradiction there was an algorithm which decides whether a given oriented graphic matroid contains an even directed circuit and uses at most $2^{n-1}-2$ oracle calls for any input oriented graphic matroid on elements $E := \{1,\ldots,n\}$.
Now, playing the role of the oracle, we will answer all of the (at most $2^{n-1}-2$) calls of the algorithm by \texttt{false}.
Since there are exactly $2^{n-1}-1$ non-empty sets $Y \in 2^E$ of even size, there must be an even non-empty subset $Y$ of $E$ such that the algorithm did not call the oracle with any input signed set whose support is $Y$.
But this means the algorithm cannot distinguish between the oriented graphic matroids $(E,\mathcal{C}_0)$ and $(E,\mathcal{C}_Y)$, where $\mathcal{C}_0:= \emptyset$ and $\mathcal{C}_Y:= \{(Y,\emptyset),(\emptyset, Y)\}$, which result in the same oracle-answers to the calls by the algorithm, while $(E,\mathcal{C}_0)$ contains no even directed circuit, but $(E,\mathcal{C}_Y)$ does.
This shows that the algorithm does not work correctly, and this contradiction proves the assertion.
\end{proof}

The above result and its proof give a hint that maybe in general the use of oriented matroid-oracles to measure the efficiency of algorithms solving \Cref{evencircuit1} is doomed to fail.
One should therefore look for a different encoding of the input oriented matroids in order to obtain a sensible algorithmic problem.
In this paper, we solve this issue by restricting the class of possible input oriented matroids to \emph{oriented regular matroids}, which allow for a much simpler and compact encoding via their representation by totally unimodular matrices (cf.\@ \Cref{tutteregularity,regularityOM}).
The following finally is the actual algorithmic problem we are going to discuss in this paper.

\begin{problem}\label{therealproblem}
Is there an algorithm which decides, given as input a totally unimodular matrix $A \in \mathbb{R}^{m \times n}$ for some $m, n \in \mathbb{N}$, whether $\vec{M}[A]$ contains an even directed circuit, and runs in time polynomial in $mn$?
\end{problem}

The alert reader might be wondering what happens if in the above problem we aim to detect odd instead of even directed circuits. The reason why this problem is not a center of study in our paper is that it admits a simple polynomial time solution, which is given in the form of \Cref{prop:oddcycleproblem} at the end of this section.

The next statement translates the main results from~\cite{robertson1999permanents} and~\cite{mccuaig2004polya} to our setting to show that \Cref{therealproblem} has a positive answer if we restrict to graphic oriented matroids as inputs.

\begin{lemma}
There exists an algorithm which, given as input any totally unimodular matrix $A \in \mathbb{R}^{m \times n}$ for some $m, n \in \mathbb{N}$ such that $\vec{M}[A]$ is a graphic oriented matroid, decides whether $\vec{M}[A]$ contains a directed circuit of even size, and which runs in time polynomial in $mn$.
\end{lemma}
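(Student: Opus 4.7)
The plan is to reduce the problem, via polynomial-time extraction of an associated digraph, to the classical even directed cycle problem in digraphs, for which polynomial-time algorithms were provided by Robertson--Seymour--Thomas~\cite{robertson1999permanents} and McCuaig~\cite{mccuaig2004polya}. Concretely, given the totally unimodular matrix $A \in \mathbb{R}^{m \times n}$ with $\vec{M}[A]$ graphic, the task is to construct in time polynomial in $mn$ a digraph $D$ on $n$ arcs with $M(D) \simeq \vec{M}[A]$. Once such a $D$ is available, the existence of an even directed circuit in $\vec{M}[A]$ is by definition equivalent to the existence of an even directed cycle in $D$, since the directed circuits of an oriented graphic matroid are in size-preserving bijection with the directed cycles of the underlying digraph; the cited polynomial-time algorithms then finish the job.

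The construction of $D$ from $A$ has two sub-tasks. First, one invokes a polynomial-time algorithm for recognising graphic matroids and outputting a graphic representation---for instance, the algorithm of Bixby and Cunningham based on Seymour's decomposition of regular matroids---to obtain a multigraph $G$ together with a bijection between $E(G)$ and the columns of $A$ realising $M(G) \simeq M[A]$. Second, one orients the edges of $G$ so as to realise $\vec{M}[A]$. Fix a spanning tree $T$ of $G$ and orient each non-tree edge arbitrarily. For every non-tree edge $e$, read off from $A$ (by solving a single linear dependence among the $|C_e|$ columns indexed by the fundamental cycle $C_e$ of $e$) the signed circuit of $\vec{M}[A]$ supported on $C_e$, normalised so that $e \in C_e^+$. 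This signing then directly prescribes how every tree edge of $C_e$ is to be oriented relative to a fixed traversal of $C_e$ starting along $e$. For tree edges $f$ lying in several fundamental cycles these prescriptions coincide---which is guaranteed precisely because $\vec{M}[A]$ is an orientation of $M(G)$---while bridges of $G$ may be oriented arbitrarily. The resulting digraph $D$ satisfies $M(D) \simeq \vec{M}[A]$, because the two oriented matroids agree on all fundamental signed circuits with respect to $T$, and these generate the full set of signed circuits.

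The main obstacle is the first sub-task: polynomial-time reconstruction of a graphic representation of the regular matroid $M[A]$ from a totally unimodular matrix encoding. This is technically intricate, but is a well-established classical result in matroid algorithmics. The remaining orientation-recovery step amounts to solving $O(n)$ small linear systems over $\mathbb{R}$ and propagating signs along $T$, and the final reduction is a single call to the even-dicycle algorithm on $D$; both clearly run in time polynomial in $mn$, concluding the proof.
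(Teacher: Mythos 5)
Your overall route is the same as the paper's: realize $M[A]$ as $M(G)$ by a graphic-recognition algorithm, recover an orientation $D$ of $G$ with $M(D)\simeq \vec{M}[A]$, and then invoke the Robertson--Seymour--Thomas/McCuaig algorithm on $D$. The first and last steps are fine, but there is a genuine gap in the orientation-recovery step. You orient \emph{all} non-tree edges arbitrarily and independently and claim that the orientations which the fundamental signed circuits then force on the tree edges automatically coincide. They need not. By the unique-reorientation-class property of graphic matroids (Bland--Las Vergnas, which you would in any case need in order to know that $\vec{M}[A]$ is realizable by \emph{some} orientation of $G$ at all), inside a $2$-connected block the orientation of every edge is determined as soon as the orientation of one single edge is fixed; only reversing an entire block preserves the oriented matroid. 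Hence at most one free choice per block is allowed, whereas you make one free choice per non-tree edge. If two non-tree edges of the same block receive choices that are not simultaneously realizable, their fundamental circuits prescribe opposite orientations on common tree edges. Concretely, let $G$ consist of two vertices $u,v$ joined by three parallel edges $a,b,c$ (or a subdivision thereof, if you prefer a simple graph), let $T=\{a\}$, and let $\vec{M}[A]=M(D_0)$ where $D_0$ orients $a,b,c$ all from $u$ to $v$. If you happen to orient $b$ from $u$ to $v$ but $c$ from $v$ to $u$, then the signed circuit supported on $\{a,b\}$ forces $a$ to be oriented $u\to v$, while the signed circuit supported on $\{a,c\}$, normalised with $c$ positive, forces $a$ to be oriented $v\to u$. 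So the assertion that ``these prescriptions coincide because $\vec{M}[A]$ is an orientation of $M(G)$'' is false, your construction of $D$ is not well defined, and you give no rule (let alone a correctness proof) for resolving such conflicts.

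The repair is essentially what the paper does: decompose $G$ into blocks, choose in each block one reference edge and orient it arbitrarily (harmless, since reversing all edges of a block does not change the oriented matroid), and then determine the orientation of every further edge $e$ of that block from the signed circuit, read off from the linear dependence of the corresponding columns of $A$, supported on a cycle through $e$ and the reference edge; such a cycle exists by $2$-connectivity and is computable by a disjoint-paths routine. Equivalently, in your spanning-tree setup you would have to make only one free choice per block and then propagate orientations through overlapping fundamental cycles, rather than choosing all non-tree orientations independently. With that change the rest of your argument (graphic realization via Bixby--Cunningham in place of Seymour's independence-oracle algorithm used in the paper, arbitrary orientation of bridges, and the fact that agreement on a circuit basis of signed circuits pins down a regular oriented matroid) goes through, and the whole procedure runs in time polynomial in $mn$.
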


\begin{proof}
The main results of Robertson et al.\@~\cite{robertson1999permanents} and McCuaig~\cite{mccuaig2004polya} yield polynomial time algorithms which, given as input a digraph $D$ (by its vertex- and edge-list) returns whether or not $D$ contains an even directed cycle.
Therefore, given a totally unimodular matrix $A \in \mathbb{R}^{m \times n}$ such that $\vec{M}[A]$ is graphic, if we can construct in polynomial time in $mn$ a digraph $D$ such that $\vec{M}[A] \simeq M(D)$, then we can decide whether $\vec{M[A]}$ contains a directed circuit of even size by testing whether $D$ contains an even directed cycle using the algorithms from~\cites{mccuaig2004polya, robertson1999permanents}.
Such a digraph can be found as follows:

First, we consider the \emph{unoriented} matroid $M[A]$ defined by the matrix $A$, which is graphic.
It follows from a result of Seymour~\cite{recoggraphmatroids} that using a polynomial number in $|E(M[A])|=n$ of calls to an independence-oracle for $M[A]$, we can compute a connected graph $G$ with $n$ edges such that $M(G) \simeq M[A]$.
For every given subset of columns of $A$ we can test linear independence in polynomial time in $mn$, and hence we can execute the steps of Seymour's algorithm in polynomial time.
Since $M(G) \simeq M[A]$, there must exist an orientation of $M(G)$ isomorphic to $\vec{M}[A]$, and this orientation in turn can be realized as $M(D)$ where $D$ is an orientation of $G$\footnote{The fact that every orientation of $M(G)$ can be realised as $M(D)$ for an orientation $D$ of $G$ follows from a classical result by Bland and LasVergnas~\cite{orientability}, who show that regular matroids (and particularly graphic ones) have a unique reorientation class.}.
To find the desired orientation $D$ of $G$ in polynomial time, we first compute a decomposition of $G$ into its blocks $G_1,\ldots,G_k$ (maximal connected subgraphs without cutvertices).

Next we (arbitrarily) select for every $i \in \{1,\ldots,k\}$ a special `reference'-edge $e_i \in E(G_i)$.
Note that two different orientations of $G$ obtained from each other by reversing all edges in one block result in the same oriented matroid, as cycles in $G$ are always entirely contained in one block.
Hence for every $i \in \{1,\ldots,k\}$ we can orient $e_i$ arbitrarily and assume w.\@l.\@o.\@g.\@ that this orientation coincides with the orientation in $D$.
Note that every block of $G$ which is not $2$-connected must be a $K_2$ forming a bridge in $G$.
In this case, the only edge of the block is our chosen reference-edge and already correctly oriented.
Now, for every $i \in \{1,\ldots,k\}$ such that $G_i$ is $2$-connected and every edge $e\in E(G_i)\setminus \{e_i\}$ there is a cycle $C$ in $G_i$ containing both $e_i$ and $e$.
This cycle can be computed in polynomial time using a disjoint-paths algorithm between the endpoints of $e$ and $e_i$.
Now we consider the minimally linearly dependent set of columns in $A$ corresponding to $C$, and compute the coefficients of a non-trivial linear combination resulting in $0$.
As we already know the orientation of $e_i \in E(C)$, this yields us the orientations of all edges on the cycle $C$ in $D$ and hence of the edge $e$.
In this way, we can compute all orientations of edges in $D$ in polynomial time in $mn$ and find the digraph $D$ such that $\vec{M}[A] \simeq M(D)$.
As discussed above, this concludes the proof.
\end{proof}

\subsection{Proof of \Cref{thm:oddjoinsandevencuts}}\label{sec:equivalence}\hfill\\
\indent We prepare the proof by a set of useful definitions and lemmas dealing with circuit bases of regular matroids.

\begin{definition}
Let $M$ be a binary matroid.
The \emph{circuit space} of $M$ is the $\mathbb{F}_2$-linear vector space generated by the incidence vectors $\mathbf{1}_C \in \mathbb{F}_2^{E(M)}$ defined by $\mathbf{1}_C(e):= 1$ for $e \in C$ and $\mathbf{1}_C(e):= 0$ for $e \notin C$ and all circuits $C$ of $M$.
A \emph{circuit basis} of $M$ is a set of circuits of $M$ whose incidence vectors form a basis of the circuit space.
Equivalently, we can consider the circuit space as a $\mathbb{F}_2$-linear subspace of the vector space whose elements are all the subsets of $E$ and where the sum $X+Y$ of two sets $X, Y \subseteq E(M)$ is defined as their symmetric difference.
\end{definition}

\begin{definition}
Let $\vec{M}$ be a regular oriented matroid and $M$ be its underlying regular matroid.
We call a circuit basis $\mathcal{B}$ of $M$ \emph{directed} if all elements of $\mathcal{B}$ are directed circuits of~$\vec{M}$.
\end{definition}

The next proposition is a well-known fact about the circuit space of a binary matroid.

\begin{proposition}\label{proposition:basesize}
Let $M$ be a binary matroid. Then the dimension of the circuit space of $M$ equals $|E(M)|-r(M)$.
\end{proposition}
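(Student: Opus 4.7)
My plan is to exhibit an explicit basis of the circuit space of size $|E(M)|-r(M)$, namely the \emph{fundamental circuits} with respect to a fixed matroid basis $B$ of $M$. Recall that for each $e\in E(M)\setminus B$ there is a unique circuit $C(e,B)\subseteq B\cup\{e\}$, and there are exactly $|E(M)|-r(M)$ such elements, so the candidate family $\mathcal{B}:=\{C(e,B) : e\in E(M)\setminus B\}$ already has the target cardinality. Linear independence of the incidence vectors $\{\mathbf{1}_{C(e,B)}\}_{e\notin B}$ over $\mathbb{F}_2$ is then immediate: the coordinate of $\mathbf{1}_{C(e,B)}$ at $e$ itself is $1$, while at any other $e'\in E(M)\setminus B$ it is $0$. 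Restricted to the coordinates $E(M)\setminus B$ these vectors form an identity matrix, so any vanishing $\mathbb{F}_2$-linear combination is trivial.

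The main step, and the only one that genuinely uses the binary hypothesis, is to show that $\mathcal{B}$ spans the circuit space. Given an arbitrary circuit $C$ of $M$, I would form
\[
C' := C \,+\, \sum_{e\in C\setminus B} C(e,B),
\]
viewed as an element of $\mathbb{F}_2^{E(M)}$, where $+$ is symmetric difference as defined in \Cref{subsec:prelim}. By construction, no element of $E(M)\setminus B$ remains in $C'$, so $C'\subseteq B$. At this point I invoke the classical fact that in a binary matroid the symmetric difference of any finite collection of circuits is a disjoint union of circuits. Since $B$ is independent and therefore contains no circuit, this forces $C'=\emptyset$, yielding $\mathbf{1}_C=\sum_{e\in C\setminus B}\mathbf{1}_{C(e,B)}$. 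As the circuit space is by definition generated by the vectors $\mathbf{1}_C$ for all circuits $C$, this proves that $\mathcal{B}$ is a spanning set, and combined with independence one obtains a basis of the claimed size.

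The main obstacle is conceptual rather than technical: the proof hinges on the binary closure property invoked above, which is exactly what distinguishes binary matroids from general ones and is why the dimension formula takes the clean form $|E(M)|-r(M)$. In the writeup I would simply cite this from a standard reference such as Oxley. A fully equivalent route would be to fix an $\mathbb{F}_2$-representation $A$ of $M$, identify the circuit space with the kernel of the linear map $\mathbb{F}_2^{E(M)}\to\mathbb{F}_2^{\mathrm{rows}(A)}$ induced by $A$, and apply rank-nullity, noting that the column-rank of $A$ equals $r(M)$ by the definition of $M[A]$; verifying that every element of $\ker A$ is an $\mathbb{F}_2$-sum of circuit indicators reduces to the same binary closure property, so this offers no genuine shortcut around the main point.
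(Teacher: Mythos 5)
Your proof is correct. The paper itself offers no argument for this proposition -- it is stated as a well-known fact without proof -- so there is nothing to compare against; your fundamental-circuit argument (linear independence from the identity block on $E(M)\setminus B$, spanning via the fact that in a binary matroid a symmetric difference of circuits is a disjoint union of circuits, so the subset $C'\subseteq B$ must be empty) is exactly the standard textbook proof one would cite, and the alternative rank--nullity formulation you sketch is equally fine.
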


The following lemma is crucial for the proof of \Cref{thm:oddjoinsandevencuts} as well as for our work on digraphs in \Cref{sec:dijoin}.

\begin{lemma}\label{lemma:circuitspace}
Let $\vec{M}$ be an oriented regular matroid.
If $\vec{M}$ is totally cyclic, then the underlying matroid $M$ admits a directed circuit basis.
Furthermore, for every coindependent set $A$ in $M$ such that $\vec{M}-A$ is totally cyclic, there exists a directed circuit basis of $M$ such that every $a \in A$ is contained in exactly one circuit of the basis.
\end{lemma}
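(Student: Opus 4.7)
The plan is to first establish the unconditional assertion (if $\vec{M}$ is totally cyclic then $M$ has a directed circuit basis), and then to upgrade it to the statement involving the coindependent set $A$. For the first assertion, by \Cref{proposition:basesize} it suffices to show that the $\mathbb{F}_2$-span $V$ of the indicator vectors of the directed circuits of $\vec{M}$ equals the entire circuit space $W$ of $M$; a directed circuit basis can then be extracted as any maximal linearly independent subfamily. I would represent $\vec{M} \simeq \vec{M}[N]$ by a totally unimodular matrix $N$ via \Cref{regularityOM}. Total cyclicity, combined with \Cref{lemma:farkas} applied element-wise, produces a strictly positive vector $y \in \ker(N)$; by the standard conformal decomposition available for TU matrices, every vector in $\ker(N) \cap \mathbb{R}^{E}_{\geq 0}$ is a nonnegative rational combination of directed circuit indicator vectors, and perturbing $y$ by arbitrary elements of $\ker(N)$ shows that the $\mathbb{Q}$-span of these indicators equals $\ker(N)$. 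I would then complete the argument by contradiction: if $V \subsetneq W$, $\mathbb{F}_2$-duality in the binary matroid $M$ provides a set $J \subseteq E$ with $|J \cap D|$ even for every directed circuit $D$ but $|J \cap C|$ odd for some circuit $C$; the strong orthogonality property (\ref{equ:strongortho}), applied to the conformal decomposition of $y$, then forces $J$ to be orthogonal mod $2$ against every signed circuit of $\vec{M}$, which is the desired contradiction.

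For the second assertion, I reduce to the first. Since $A$ is coindependent, $E \setminus A$ contains a basis of $M$, and the circuit space of $M - A$ has dimension $|E|-|A|-r(M)$. Applying the first assertion to $\vec{M} - A$ (totally cyclic by hypothesis) yields a directed circuit basis $\{D_1, \dots, D_k\}$ of $M - A$ consisting of directed circuits of $\vec{M}$ supported on $E(\vec{M}) \setminus A$. For each $a \in A$ I claim there is a directed circuit $D_a$ of $\vec{M}$ with $D_a \cap A = \{a\}$, i.e.\ $a$ lies in a directed circuit of $\vec{M} - (A \setminus \{a\})$. If not, \Cref{lemma:farkas} applied to $\vec{M} - (A \setminus \{a\})$ gives a directed signed cocircuit through $a$, which lifts to a signed cocircuit $(S^+, S^-)$ of $\vec{M}$ with $a \in S^+$ and $S^- \subseteq A \setminus \{a\}$; coindependence of $A$ forces $S \not\subseteq A$, so the further restriction $(S^+ \setminus A, \emptyset)$ to $E \setminus A$ is nonempty, directed, and orthogonal to every signed circuit of $\vec{M} - A$, hence contains a directed signed cocircuit of $\vec{M} - A$ - contradicting total cyclicity of $\vec{M} - A$. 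Taking $\mathcal{B} := \{D_1, \dots, D_k\} \cup \{D_a : a \in A\}$ gives a collection of $|E| - r(M)$ directed circuits of $\vec{M}$. Any $\mathbb{F}_2$-linear dependence among them, restricted to the $A$-coordinates, becomes a relation among the standard basis vectors of $\mathbb{F}_2^A$ (since $D_i \cap A = \emptyset$ and $D_a \cap A = \{a\}$), forcing all $D_a$-coefficients to vanish, whereupon linear independence of $\{D_1, \dots, D_k\}$ finishes the argument. Each $a \in A$ is contained in exactly the basis circuit $D_a$, as required.

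The central difficulty will be the first assertion, specifically the transfer from the $\mathbb{Q}$-span statement for $\ker(N)$ to the $\mathbb{F}_2$-span statement for the circuit space: naive rank arguments for $\{0,1\}$-vectors fail here, since $\mathbb{Q}$-independent circuit indicator vectors can become $\mathbb{F}_2$-dependent (for instance, the four triangle indicator vectors of $K_4$ are $\mathbb{Q}$-independent but sum to zero mod $2$). The strong orthogonality property (\ref{equ:strongortho}) of regular oriented matroids is the essential extra ingredient, converting $\mathbb{F}_2$-parity information about directed circuits into parity information against all signed circuits of $M$, and thus bridging the $\mathbb{Q}$ and $\mathbb{F}_2$ worlds.
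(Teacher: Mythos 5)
Your second assertion is fine: the construction of the circuits $D_a$ via \Cref{lemma:farkas}, the use of coindependence to get a nonempty directed cocircuit avoiding $a$ inside $E\setminus A$, and the independence/counting argument are essentially the paper's own proof of that part. The genuine problem sits in the first assertion, at exactly the step you yourself flag as the central difficulty. The set $J$ obtained from assuming $V\subsetneq W$ is only known to meet every \emph{directed} circuit evenly; it is not known to lie in the $\mathbb{F}_2$-cocycle space, and that is what it would have to be before the strong orthogonality property (\ref{equ:strongortho}) says anything, since (\ref{equ:strongortho}) is a statement about circuit--cocircuit pairs. Applying it ``to the conformal decomposition of $y$'' only restates that directed circuit vectors are orthogonal to signed cocircuit vectors; it provides no mechanism for propagating the evenness of $J$ from directed circuits to arbitrary circuits, which is the entire content of the assertion. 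Likewise the $\mathbb{Q}$-span statement cannot be reduced mod $2$ because of denominators, as your own $K_4$ example shows. So as written the crucial step is not proved.

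The repair is an integrality statement, not (\ref{equ:strongortho}). Since $N$ is totally unimodular, every nonnegative \emph{integral} vector of $\ker(N)$ is a nonnegative integral combination of incidence vectors of directed circuits (peel off a directed circuit contained in the support and subtract the minimum value on it), and by \Cref{lemma:farkas} you may take $y$ to be an integral positive kernel vector that is itself a sum of directed circuit vectors. For an arbitrary circuit $C$ with signed vector $x_C\in\ker(N)$, the vector $x_C+ty$ is nonnegative and integral for a large integer $t$, hence an integral nonnegative combination of directed circuit vectors; therefore $x_C$ lies in the $\mathbb{Z}$-span of directed circuit vectors, and reducing mod $2$ puts $\mathbf{1}_C$ in the $\mathbb{F}_2$-span $V$. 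With \Cref{proposition:basesize} this yields the first assertion, and then your overall route becomes a legitimate alternative to the paper's argument, which instead proceeds by induction on $|E(M)|$: delete $e$ if $\vec{M}-e$ remains totally cyclic, and otherwise contract $e$ and lift a directed circuit basis of $M/e$, using \Cref{lemma:farkas} and the orthogonality property (\ref{equ:ortho}) to exclude the bad lift $(C,\{e\})$; the inductive proof has the added benefit of being directly implementable, which the paper exploits later in \Cref{lemma:compute_base_and_cover}.
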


\begin{proof}
We start by proving the first assertion concerning the existence of a directed circuit basis of $M$.
We use induction on $|E(M)|$.
If $M$ consists of a single element, the claim holds trivially, since every circuit is a loop and thus directed.
So assume now that $|E(M)|=k \ge 2$ and that the statement of the lemma holds for all oriented regular matroids on at most $k-1$ elements. Choose some $e \in E(M)$ arbitrarily.
Since $\vec{M}$ is totally cyclic, there exists a directed circuit $C_e$ containing $e$. 
Let us now consider the oriented regular matroid $\vec{M}-e$.
If $\vec{M}-e$ is totally cyclic, then we can apply the induction hypothesis to $\vec{M}-e$ and find a directed circuit basis $\mathcal{B}^-$ of $M-e$.
Now consider the collection $\mathcal{B} = \mathcal{B}^- \cup \{ C_e \}$ of directed circuits in $\vec{M}$.
The incidence vectors of these circuits are linearly independent over $\mathbb{F}_2$, as $C_e$ is the only circuit yielding a non-zero entry at element $e$.
Furthermore, we get by induction that $ |\mathcal{B}| = |E(M)|-1-r(M-e)+1=|E(M)|-r(M-e)=|E(M)|-r(M)$.
The last equality holds since $e$ is contained in the circuit $C_e$ and hence is not a coloop.
As this matches the dimension of the circuit space of $M$, we have found a directed circuit basis of $M$, proving the inductive claim.

It remains to prove the case where $\vec{M}-e$ is not totally cyclic, i.\@e.\@, there is an element not contained in a directed circuit.
By Farkas' Lemma (\Cref{lemma:farkas}) applied to $\vec{M}-e$ and this element there exists a directed cocircuit $S$ in $\vec{M}-e$. Then either $(S, \emptyset), (S \cup \{e\},\emptyset)$ or $(S, \{e\})$ form a signed cocircuit of $\vec{M}$.
Since $\vec{M}$ is totally cyclic, it contains no directed cocircuits, and hence only the latter case is possible, $(S,\{e\})$ must form a signed cocircuit.

Let us now consider the oriented regular matroid $\vec{M}/e$.
Since $\vec{M}$ is totally cyclic, so is $\vec{M}/e$.
By the induction hypothesis there exists a directed circuit basis $\mathcal{B}^-$ of $M/e$.
By definition, for every directed circuit $C \in \mathcal{B}^-$, either $C$ is a directed circuit in $\vec{M}$ not containing $e$, or $C \cup \{e\}$ is a directed circuit in $\vec{M}$, or $(C,\{e\})$ forms a signed circuit of $\vec{M}$.
The latter is however impossible, as in this case we can consider the signed cocircuit $X=(S,\{e\})$ and the signed circuit $Y=(C,\{e\})$ of $\vec{M}$, which satisfy $e \in X^- \cap Y^- \neq \emptyset$ but furthermore ${(X^+ \cap Y^-) \cup (X^- \cap Y^+)=\emptyset}$, violating the orthogonality property (\ref{equ:ortho}) of oriented matroids.

Hence, the set $\mathcal{B}:= \{C \; | \; C \in \mathcal{B}^-\text{ circuit in }M\}\cup \{C \cup \{e\} \; | \; C \in \mathcal{B}^-, C \cup \{e\}\text{ circuit in }M\}$ consists of $|\mathcal{B}|=|\mathcal{B}^-|=|E(M)|-1-r(M/e)=|E(M)|-r(M)$ many circuits of $M$ which are all directed ones in $\vec{M}$.
Note that for the last equality we used that $e$ is not a loop, as it is contained in the cocircuit $S \cup \{e\}$ of $M$.
Finally, we claim that the binary incidence vectors of the elements of $\mathcal{B}$ in $\mathbb{F}_2^{E(M)}$ are linearly independent.
This follows since the restriction of these vectors to the coordinates $E(M) \setminus \{ e \}$ equals the characteristic vectors of the elements of $\mathcal{B}^-$, which form a circuit basis of $M/e$.
This shows that we have found a directed circuit basis of $M$, proving the inductive claim.

For the second assertion, let a coindependent set $A$ in $M$ be given and suppose that $\vec{M}-A$ is totally cyclic.
We claim that for every $a \in A$ there exists a directed circuit $C_a$ in $\vec{M}$ such that $C_a \cap A=\{a\}$.
Equivalently, we may show that the oriented matroid $\vec{M}-(A\setminus \{a\})$ has a directed circuit containing $a$.
Towards a contradiction, suppose not, then by Farkas' Lemma (\Cref{lemma:farkas}) there exists a directed cocircuit $S$ in $\vec{M}-(A\setminus \{a\})$ containing $a$. Since $A$ is coindependent, $a$ is not a coloop of $M$ and hence $S \setminus \{a\} \neq \emptyset$.
Every directed circuit in $\vec{M}-(A\setminus \{a\})$ must be disjoint from $S$, and hence no $f \in S\setminus \{a\}$ is contained in a directed circuit of $\vec{M}-A$, contradicting our assumption that $\vec{M}-A$ is totally cyclic.
It follows that for each $a \in A$ a directed circuit $C_a$ with $C_a \cap A=\{a\}$ exists.

Next we apply the first assertion of this lemma to the totally cyclic oriented matroid $\vec{M}-A$.
We find that there exists a directed circuit basis $\mathcal{B}_A$ of $M-A$.
We claim that $\mathcal{B}:= \mathcal{B}_A \cup \{C_a \; | \; a \in A\}$ forms a directed circuit basis of $M$ satisfying the properties claimed in this lemma.
Indeed, every circuit in $\mathcal{B}$ is a directed circuit of $\vec{M}$, and for every $a \in A$ the circuit $C_a$ is the only circuit in $\mathcal{B}$ containing $a$.
Since the characteristic vectors of the elements of $\mathcal{B}_A$ are linearly independent as $\mathcal{B}_A$ is a circuit basis of $M-A$, we already get that the characteristic vectors of elements of $\mathcal{B}$ are linearly independent using that the characteristic vector of $C_a$ is the only basis-vector having a non-zero entry at the position corresponding to element~$a$.
To show that $\mathcal{B}$ indeed is a circuit basis of $M$, it remains to verify that it has the required size.
We have $|\mathcal{B}|=|A|+|\mathcal{B}_A|=|A|+|E(M-A)|-r(M-A)=|E(M)|-r(M)$, where for the latter equality we used that $r(M-A)=r(M)$ since $A$ is coindependent.
This concludes the proof of the second assertion.
\end{proof}

In order to prove our next lemma, we need the following result, which was already used by Seymour and Thomassen.

\begin{lemma}[\cite{seymour1987characterization}, Prop.~3.2] \label{lemma:oddandeven}
Let $E$ be a finite set and $\mathcal{F}$ a family of subsets of $E$. Then precisely one of the following statements holds:
\begin{enumerate}[label=(\roman*)]
\item There is a subset $J \subseteq E$ such that $|F \cap J|$ is odd for every $F \in \mathcal{F}$.
\item There are sets $F_1,\ldots,F_k \in \mathcal{F}$, where $k \in \mathbb{N}$ is odd, such that $\sum_{i=1}^{k}{F_i}=\emptyset$. 
\end{enumerate}
\end{lemma}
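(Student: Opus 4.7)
The plan is to translate the dichotomy into linear algebra over $\mathbb{F}_2$. Encode each $F \subseteq E$ by its indicator vector $\mathbf{1}_F \in \mathbb{F}_2^E$; under the standard bilinear form on $\mathbb{F}_2^E$, the value $\langle \mathbf{1}_J, \mathbf{1}_F \rangle$ equals the parity of $|J \cap F|$. Hence condition (i) is exactly the solvability of the $\mathbb{F}_2$-linear system $\langle x, \mathbf{1}_F \rangle = 1$ for $F \in \mathcal{F}$, while condition (ii) is the existence of coefficients $\lambda_F \in \mathbb{F}_2$ with $\sum_{F \in \mathcal{F}} \lambda_F \mathbf{1}_F = 0$ and $\sum_{F \in \mathcal{F}} \lambda_F = 1$, since the members $F$ with $\lambda_F = 1$ then play the role of an odd-sized subfamily $\{F_1, \ldots, F_k\}$ whose symmetric difference is empty.

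First I would dispose of mutual exclusivity by a one-line parity check: if $J$ witnesses (i) and $F_1, \ldots, F_k$ witness (ii), then summing $|J \cap F_i| \equiv 1 \pmod 2$ over $i$ gives $\sum_i |J \cap F_i| \equiv k \equiv 1 \pmod 2$, yet a standard parity rearrangement gives $\sum_i |J \cap F_i| \equiv |J \cap (F_1 + \cdots + F_k)| = |J \cap \emptyset| = 0 \pmod 2$, a contradiction. For the substantive direction I would invoke the Fredholm alternative for linear systems over $\mathbb{F}_2$: a finite system $Mx = b$ is infeasible if and only if there is a row-combination $\lambda$ with $\lambda^\top M = 0$ and $\lambda^\top b \neq 0$. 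Applied to the defining system of (i), where the rows of $M$ are the $\mathbf{1}_F$ and $b$ is the all-ones vector indexed by $\mathcal{F}$, infeasibility of (i) is precisely the statement of (ii).

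The main (and essentially only) subtlety is to confirm that the Fredholm alternative is available in its usual form over $\mathbb{F}_2$ despite the presence of self-orthogonal vectors under the coordinate bilinear form; this is immediate because the standard basis remains self-dual, so the form is non-degenerate and the identity $W = (W^\perp)^\perp$ for subspaces $W \subseteq \mathbb{F}_2^E$ persists. Once this is noted, the entire argument reduces to row-reducing the defining system of (i), and I expect no further obstacle beyond keeping the bookkeeping between set-theoretic and vectorial language consistent.
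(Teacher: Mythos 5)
Your proposal is correct. Note, however, that the paper does not prove this lemma at all: it cites it as Proposition~3.2 of Seymour and Thomassen, and the only argument supplied in the text is the one-line parity check for mutual exclusivity, which coincides with yours (summing $|J\cap F_i|$ over $i$ and comparing with $|J\cap\sum_i F_i|=0$). Your substantive direction via the Fredholm alternative over $\mathbb{F}_2$ is a complete and standard way to establish the cited result: condition (i) is solvability of the system with rows $\mathbf{1}_F$ and right-hand side all ones, and infeasibility is certified exactly by a $\lambda$ with $\lambda^{\top}M=0$ and $\lambda^{\top}\mathbf{1}=1$, i.e.\ an odd subfamily with empty symmetric difference. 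You were also right to flag the only delicate point, namely that over $\mathbb{F}_2$ the coordinate form is non-degenerate so $W=(W^{\perp})^{\perp}$ still holds despite self-orthogonal vectors; with that remark the duality argument is sound. Two trivial housekeeping points you may add for completeness: the case $\mathcal{F}=\emptyset$ (where (i) holds vacuously) and the possibility $\emptyset\in\mathcal{F}$ (where (ii) holds with $k=1$) are both handled automatically by the linear-algebraic formulation, and allowing repeated sets in (ii) changes nothing, since repetitions cancel in pairs without affecting the parity of $k$.
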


Please note that (i) and (ii) cannot hold simultaneously because if $k$ is odd and $F_1,\dotsc,F_k$ all have odd intersection with $A$, then the symmetric difference $\sum_{i=1}^k F_i$ has odd intersection with $A$.

We now derive the following corollary for totally cyclic oriented regular matroids by using \Cref{lemma:circuitspace} and applying \Cref{lemma:oddandeven} to a directed circuit basis.

\begin{corollary}\label{cor:matroidweighting}
Let $\vec{M}$ be a totally cyclic oriented regular matroid, and let $\mathcal{B}$ be a directed circuit basis of $M$.
Then there exists $J \subseteq E(\vec{M})$ such that $|C \cap J|$ is odd for every $C \in \mathcal{B}$. 
\end{corollary}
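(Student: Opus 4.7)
The plan is to deduce this directly from \Cref{lemma:oddandeven} applied to the family $\mathcal{F} := \mathcal{B}$ on the ground set $E := E(\vec M)$. The conclusion we want is exactly alternative (i) of that dichotomy, so it suffices to rule out alternative (ii): namely, that there exist an odd number $k$ of (not necessarily distinct) circuits $F_1,\dots,F_k \in \mathcal{B}$ with $\sum_{i=1}^k F_i = \emptyset$.

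To rule this out, I would use the defining property of $\mathcal{B}$ as a circuit basis: the incidence vectors $\{\mathbf{1}_C : C \in \mathcal{B}\} \subseteq \mathbb{F}_2^{E}$ are linearly independent. Suppose for contradiction that $F_1,\dots,F_k \in \mathcal{B}$ (with $k$ odd) have empty symmetric difference. Grouping by distinct value, let $G_1,\dots,G_m$ be the pairwise distinct elements of $\mathcal{B}$ occurring among the $F_i$, with multiplicities $n_1,\dots,n_m$. In the $\mathbb{F}_2$-vector space of subsets of $E$ (with symmetric difference as addition), the identity $\sum_{i=1}^k F_i = \emptyset$ translates to
\[
\sum_{j\,:\,n_j \text{ odd}} \mathbf{1}_{G_j} \;=\; 0 \quad \text{in } \mathbb{F}_2^{E}.
\]
Since $\sum_j n_j = k$ is odd, at least one $n_j$ must be odd, so this is a nontrivial $\mathbb{F}_2$-linear combination of distinct basis vectors $\mathbf{1}_{G_j}$ adding to zero, contradicting linear independence of $\{\mathbf{1}_C : C \in \mathcal{B}\}$.

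Hence alternative (ii) of \Cref{lemma:oddandeven} fails, and therefore (i) holds: there is a set $J \subseteq E$ such that $|C \cap J|$ is odd for every $C \in \mathcal{B}$, which is precisely what the corollary asserts. The only subtle point is the handling of repeated sets in the application of \Cref{lemma:oddandeven}, which is why I would pass through the multiplicity decomposition above rather than appealing to linear independence with the $F_i$ directly; otherwise the argument is essentially just one invocation of the dichotomy together with the linear-independence half of the definition of a circuit basis.
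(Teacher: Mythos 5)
Your proposal is correct and follows essentially the same route as the paper: apply \Cref{lemma:oddandeven} with $\mathcal{F}=\mathcal{B}$ and rule out alternative (ii) via the $\mathbb{F}_2$-linear independence of the incidence vectors of the basis circuits. Your extra care about possible repetitions among the $F_i$ (reducing multiplicities mod~$2$) is a harmless refinement of the same one-line argument the paper gives.
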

\begin{proof}
The claim is that $(i)$ in \Cref{lemma:oddandeven} with $E=E(\vec{M})$ and $\mathcal{F}:=\mathcal{B}$ holds true, so it suffices to rule out $(ii)$.
However, the latter would contradict the linear independence of the basis $\mathcal{B}$.
\end{proof}

Building on this corollary we derive equivalent properties for an oriented matroid to be non-even.

\begin{proposition} \label{proposition:matroidequivalence}
Let $\vec{M}$ be a totally cyclic oriented regular matroid and let $\mathcal B$ be a directed circuit basis of $M$.
Furthermore, let $J \subseteq E(M)$ be such that $|C \cap J|$ is odd for all $C \in \mathcal B$.
 Then the following statements are equivalent:
 \begin{enumerate}[label=(\roman*)]
  \item $\vec{M}$ is non-even.
  \item If $C_1,\ldots,C_k$ are directed circuits of $\vec{M}$ where $k \in \mathbb{N}$ is odd, then $\sum_{i=1}^k C_i \neq \emptyset$.
  \item Every directed circuit of $\vec{M}$ is a sum of an odd number of elements of $\mathcal{B}$.
  \item $|C \cap J|$ is odd for all directed circuits $C$ of $\vec{M}$.
 \end{enumerate}
\end{proposition}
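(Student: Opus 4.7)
The plan is to establish the equivalence through the cycle $(i) \Rightarrow (iii) \Rightarrow (iv) \Rightarrow (i)$, handling $(i) \Leftrightarrow (ii)$ separately. The equivalence $(i) \Leftrightarrow (ii)$ is an immediate invocation of \Cref{lemma:oddandeven}: take $E=E(\vec{M})$ and let $\mathcal{F}$ be the family of (supports of) directed circuits of $\vec{M}$. Item $(i)$ of \Cref{lemma:oddandeven} is literally $(i)$ of our proposition, while $(ii)$ of the lemma is the negation of our $(ii)$, so the dichotomy of the lemma yields exactly what we want.

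For $(i) \Rightarrow (iii)$, I would fix a witness set $J' \subseteq E(\vec{M})$ certifying non-evenness. Since every $B \in \mathcal{B}$ is a directed circuit of $\vec{M}$, the hypothesis gives $|B \cap J'| \equiv 1 \pmod{2}$. Now let $C$ be any directed circuit of $\vec{M}$; its incidence vector lies in the circuit space of $M$ (as the circuit space is spanned by all circuits of $M$), so by the basis property of $\mathcal{B}$ there is a unique index set $I$ with $C = \sum_{i\in I} B_i$ (symmetric difference). Intersecting with $J'$ and reducing modulo $2$ yields
\[
 |C \cap J'| \equiv \sum_{i\in I} |B_i \cap J'| \equiv |I| \pmod{2},
\]
so $|I|$ is odd, which is exactly $(iii)$.

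For $(iii) \Rightarrow (iv)$, the same expansion $C = \sum_{i\in I} B_i$ with $|I|$ odd, combined with the standing assumption that $|B_i \cap J|$ is odd for every $i$, gives $|C \cap J| \equiv |I| \equiv 1 \pmod 2$. For $(iv) \Rightarrow (i)$ the set $J$ itself is a witness, so nothing needs proving. Combined with $(i) \Leftrightarrow (ii)$, the four statements are equivalent.

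No step looks genuinely hard: the only point that needs care is confirming that every directed circuit can be expressed as an $\mathbb{F}_2$-linear combination of the basis elements, which is automatic since $\mathcal{B}$ is a basis of the full circuit space of $M$ (not merely of the ``directed part''), and every circuit, directed or not, belongs to this space. The role of the hypothesis that $\vec{M}$ is totally cyclic is that it guarantees the existence of such a directed basis $\mathcal{B}$ via \Cref{lemma:circuitspace}, and indirectly, via \Cref{cor:matroidweighting}, the existence of the set $J$ in the proposition's statement.
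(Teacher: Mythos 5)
Your proof is correct and follows essentially the same route as the paper: \Cref{lemma:oddandeven} supplies the link between (i) and (ii), and the $\mathbb{F}_2$-expansion of a directed circuit in the basis $\mathcal{B}$ yields the remaining implications, with (iv) $\Rightarrow$ (i) being immediate. The only cosmetic difference is that you prove (i) $\Rightarrow$ (iii) directly by a parity count against a witness set $J'$, whereas the paper derives (iii) from (ii) by observing that an even-length expansion would give an odd family of directed circuits whose symmetric difference is empty.
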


\begin{proof}
\noindent
 \begin{description} 
  \item[``(i) $\Rightarrow$ (ii)''] This follows from \Cref{lemma:oddandeven} applied to the set of all directed circuits of $\vec{M}$.
  \item[``(ii) $\Rightarrow$ (iii)''] Let $C$ be a directed circuit of $\vec{M}$.
  Since $\mathcal B$ is a circuit basis of $M$, we can write $C = \sum_{i=1}^k C_i$ for some $k \in \mathbb{N}$ and $C_1,\ldots,C_k \in \mathcal B$.
  If $k$ were even, then the sum $C + \sum_{i=1}^k C_i = \emptyset$ would yield a contradiction to (ii).
  \item[``(iii) $\Rightarrow$ (iv)''] Let $C$ be a directed circuit of $\vec{M}$.
  By assumption, $C = \sum_{i=1}^k C_i$ with $C_1,\ldots,C_k \in \mathcal B$ and $k \in \mathbb{N}$ being odd.
  Since $J$ has odd intersection with all $C_i$, the set~$J$ has also odd intersection with $C$.
  \item[``(iv) $\Rightarrow$ (i)''] This implication follows directly from the definition of non-even. 
 \end{description}
\end{proof}

Before we turn towards the proof of \Cref{thm:oddjoinsandevencuts} we need the following result, yielding a computational version of Farkas' lemma for oriented regular matroids. Although we suspect the statement is well-known among experts, we include a proof for the sake of completeness.

\begin{lemma}\label{lemma:farkas-compute}
There exists an algorithm that, given as input a regular oriented matroid $\vec{M}$ represented by a totally unimodular matrix $A \in \{-1,0,1\}^{m \times n}$ such that $\vec{M} \simeq \vec{M}[A]$, and an element $e \in E(\vec{M})$, outputs either a directed circuit of $\vec{M}$ containing $e$ or a directed cocircuit of $\vec{M}$ containing $e$, and which runs in polynomial time in $mn$.
\end{lemma}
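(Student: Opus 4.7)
The plan is to formulate the search as a linear feasibility question and invoke Farkas' lemma, then refine the resulting witness to have minimal support. Writing the columns of $A$ as $(A_f)_{f \in E(\vec{M})}$, a directed circuit of $\vec{M}[A]$ containing $e$ corresponds exactly to a vector $\alpha \in \mathbb{R}^{E(\vec{M})}$ with $\alpha \geq 0$, $A\alpha = 0$, $\alpha_e = 1$, and whose support is a circuit of $M[A]$; similarly, a directed cocircuit containing $e$ corresponds to a non-negative vector $v$ in the row space of $A$ with $v_e > 0$ whose support is a cocircuit of $M[A]$. By \Cref{lemma:farkas} exactly one of these two objects exists, so the task reduces to computing whichever one is available.

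First, the algorithm tests in polynomial time whether the system $\{\alpha \geq 0 : A\alpha = 0,\ \alpha_e = 1\}$ is feasible. If it is, any polynomial-time LP algorithm returns an explicit rational $\alpha$; otherwise, LP duality produces a certificate $y \in \mathbb{R}^m$ with $(A^T y)_f \geq 0$ for all $f \neq e$ and $(A^T y)_e > 0$, in which case we set $v := A^T y$ in the row space of $A$. Total unimodularity of $A$ ensures that in both cases $\alpha$ or $y$ can be obtained with polynomial bit-complexity; in particular, one may invoke Tardos' strongly polynomial algorithm for LPs with totally unimodular constraint matrices.

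Next, I would refine the support to be minimal. In the circuit case, while the support $S = \{f : \alpha_f > 0\}$ is not a circuit of $M[A]$, the columns $(A_f)_{f \in S}$ have nullity at least two, so Gaussian elimination produces a non-zero $\beta \in \mathbb{R}^{E(\vec{M})}$ with $A\beta = 0$, support contained in $S$, and $\beta_e = 0$ (obtained by subtracting an appropriate multiple of $\alpha$ from any second independent null-space vector). After possibly replacing $\beta$ by $-\beta$, set $t := \min\{\alpha_f / \beta_f : f \in S,\ \beta_f > 0\}$; then $\alpha - t\beta$ remains non-negative, preserves $\alpha_e = 1$, and has strictly smaller support. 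Iterating at most $|E(\vec{M})|$ times yields a circuit-supported non-negative null-space vector, which exhibits a directed circuit of $\vec{M}[A]$ containing $e$. The cocircuit case is handled symmetrically: while the support of $v$ is not a cocircuit of $M[A]$, produce another row-space vector $v'$ supported in $\mathrm{supp}(v)$ with $v'_e = 0$, again via a single linear system, and replace $v$ by $v \pm tv'$ to zero out a further coordinate while keeping $v \geq 0$ and $v_e > 0$.

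The main technical concern is simply that every intermediate number remain of polynomially bounded bit-length; this is secured by the total unimodularity of $A$ together with standard bit-complexity bounds for Gaussian elimination over $\mathbb{Q}$ and for LPs with totally unimodular constraint matrices. Correctness then follows because the algorithm always terminates with a non-negative vector (in the null space or in the row space, respectively) whose support is the required minimal set containing $e$.
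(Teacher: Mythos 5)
Your proof is correct, and while it starts from the same point as the paper --- testing feasibility of the conical-hull/LP system $-x_e=\sum_{f\neq e}\alpha_f x_f$, $\alpha\ge 0$ with a polynomial-time LP algorithm on the totally unimodular data --- it extracts the witness in a genuinely different way. The paper turns the LP solver into a pure \emph{decision} oracle (using Carath\'eodory's theorem to justify the equivalence), then finds the directed circuit by a greedy deletion loop that repeatedly re-runs the decision algorithm on $\vec{M}[Z]-f$ (order $n^2$ LP calls), and in the cocircuit case it first computes a totally unimodular representation $A^\ast$ of the dual matroid and reruns the whole circuit-detection procedure there. You instead refine the primal LP solution directly by conformal support reduction in $\ker(A)$ (your nullity argument correctly shows the support is a circuit exactly when the relevant nullity drops to one, and $\alpha_e=1$, $\alpha\ge 0$ guarantee the final circuit is directed and contains $e$), and in the infeasible case you take the Farkas/duality certificate $v=A^{T}y\ge 0$, $v_e>0$ and reduce its support inside the row space, which avoids constructing $A^\ast$ altogether. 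Both routes are polynomial; yours is more economical (a bounded number of Gaussian eliminations and one LP/certificate computation rather than many LP calls plus an explicit dualisation), at the cost of invoking the standard fact that the signed cocircuits of $\vec{M}[A]$ are exactly the minimal-support sign patterns of row-space vectors of $A$ --- a fact you should state explicitly (it is standard for realizable oriented matroids and plays the role that the paper's footnote on $A^\ast$ plays there), but it is not a gap.
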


\begin{proof}
We first observe that we can decide in polynomial time in $mn$ whether $e$ is contained in a directed circuit or in a directed cocircuit of $\vec{M}$ (by Farkas' Lemma, we know that exactly one of these two options must be satisfied).
Let us denote for every element $f \in E(\vec{M})$ by $x_f \in \{-1,0,1\}^{m}$ the corresponding column-vector of $A$.
We need the following claim:

The element $e$ is contained in a directed circuit of $\vec{M}$ if and only if there exist non-negative scalars $\alpha_f \ge 0$ for $f \in E(\vec{M}) \setminus \{e\}$ such that $-x_e=\sum_{f \in E(\vec{M} \setminus \{e\})}{\alpha_f x_f}$. 

The necessity of this condition follows directly by definition of $\vec{M}[A]$: If $e$ is contained in a directed circuit with elements $e,f_1,\ldots,f_k$, then there are coefficients $\beta_e>0$ and $\beta_i>0$ for $1 \le i \le k$ such that $\beta_e x_e+\sum_{i=1}^{k}{\beta_i x_{f_i}}=0$, i.\@e.\@, $-x_e=\sum_{i=1}^{k}{\frac{\beta_i}{\beta_e}x_{f_i}}$.
On the other hand, if $-x_e$ is contained in the conical hull of $\{x_f|f \in E(\vec{M})\setminus\{e\}\}$, then we can select an inclusion-wise minimal subset $F \subseteq E(\vec{M} \setminus \{e\})$ such that $-x_e$ is contained in the conical hull of $\{x_f|f \in F\}$.
We claim that $\{e\} \cup F$ forms a directed circuit of $\vec{M}$.
By definition of $F$, it suffices to verify that the vectors $x_e$ and $x_f$ for $f \in F$ are minimally linearly dependent.
However, this follows directly by Carath\'{e}odory's Theorem: The dimension of the subspace spanned by $\{x_f|f \in F\}$ equals $|F|$, for otherwise we could select a subset of at most $|F|-1$ elements from $\{x_f|f \in F\}$ whose conical hull also contains $-x_e$, contradicting the minimality of $F$.
This shows the equivalence claimed above.

We can now use the well-known linear programming algorithm for linear programs with integral constraints by Khachiyan~\cites{khachiyan, gacslovasz} to decide in strongly polynomial time\footnote{Here we use the fact that all coefficients appearing in the linear system are $-1$,$0$ or $1$.} (and hence in polynomial time in $mn$) the feasibility of the linear inequality system
\begin{align*}
\sum_{f \in E(\vec{M} \setminus \{e\})}{\alpha_f x_f}=-x_e, \text{ with }  \alpha_f \ge 0.
\end{align*}
Therefore, we have shown that we can decide in polynomial time in $mn$ whether or not $e$ is contained in a directed circuit of $\vec{M}$.
Next we give an algorithm which, given that $e$ is contained in a directed circuit of $\vec{M}$, finds such a circuit in polynomial time: 

During the procedure, we update a subset $Z \subseteq E(\vec{M})$, which maintains the property that  it contains a directed circuit including $e$.
At the end of the procedure $Z$ will form such a directed circuit of $\vec{M}$. We initialise $Z:= E(\vec{M})$.
During each step of the procedure, we go through the elements $f \in Z \setminus \{e\}$ one by one and apply the above algorithm to test whether $\vec{M}[Z]-f$ contains a directed circuit including $e$.
At the first moment such an element is found, we put $Z:= Z\setminus \{f\}$ and repeat.
If no such element is found, we stop and output $Z$. 

Since we reduce the size of the set $Z$ at each round of the procedure, the above algorithm runs in at most $n$ rounds and calls the above decision algorithm for the existence of a directed circuit including $e$ at most $n-1$ times in every round.
All in all, the algorithm runs in time polynomial in $mn$.
It is obvious that the procedure maintains the property that $Z$ contains a directed circuit including $e$ and that at the end of the procedure all elements of $Z$ must be contained in this circuit, i.\@e.\@, $Z$ forms a directed circuit with the desired properties.  

To complete the proof we now give an algorithm which finds either a directed circuit or a directed cocircuit through a given element $e$ of $\vec{M}$ as follows:
First we apply the first (decision) algorithm, which either tells us that $e$ is contained in a directed circuit of $\vec{M}$, in which case we apply the second (detection) algorithm to find such a circuit.
Otherwise we know that $e$ is contained in a directed cocircuit of $\vec{M}$, in which case we compute in polynomial time a totally unimodular representing matrix $A^\ast$ with at most $n$ rows and $n$ columns\footnote{To find such a representing matrix, one can use Gaussian elimination to compute a basis $\mathcal{B}$ of $\text{ker}(A)$.
Since $A$ is totally unimodular, the vectors in $\mathcal{B}$ can be taken to be $\{-1,0,1\}$-vectors such that the matrix $A^\ast$ consisting of the elements of $\mathcal{B}$ written as row-vectors is totally unimodular as well.
It then follows from the orthogonality property of regular oriented matroids that $A^\ast$ indeed forms a representation of $\vec{M}^\ast$, using the fact that the row spaces of $A$ and $A^\ast$ are orthogonal complements.} of the dual regular oriented matroid $\vec{M}^\ast$.
As we know that $e$ is included in a directed circuit of $\vec{M}^\ast$, we can apply the second (detection) algorithm to $A^\ast$ and $\vec{M}^\ast$ instead of $A$ and $\vec{M}$ in order to find a directed cocircuit in $\vec{M}$ containing $e$ in polynomial time.
\end{proof}

Given a regular oriented matroid $\vec{M}$ we shall denote by $TC(\vec{M})$ the largest totally cyclic deletion minor of $\vec{M}$, i.\@e.\@ the deletion minor of $\vec{M}$ whose ground set is
\begin{align*}
E(TC(\vec{M})) :=  \bigcup\{C \; | \; C \textnormal{ is a directed circuit of } \vec{M} \}. 
\end{align*}
From~\Cref{lemma:farkas-compute} we directly have the following.

\begin{corollary}\label{cor:totallycyclic}
Let $\vec{M}$ be a regular oriented matroid represented by a totally unimodular matrix $A \in \{-1,0,1\}^{m \times n}$ for some $m \in \mathbb{N}$ and $n = |E(M)|$.
Then we can compute a totally unimodular representative matrix of $TC(\vec{M})$ in time polynomial in $mn$. \qed
\end{corollary}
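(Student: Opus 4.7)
The plan is to build the desired representative matrix by simply removing from $A$ all columns corresponding to elements that fail to lie in a directed circuit of $\vec{M}$. This set can be identified in polynomial time using \Cref{lemma:farkas-compute}.

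Concretely, I would proceed as follows. First, enumerate the elements $e_1,\ldots,e_n$ of $E(\vec{M})$ (which correspond to the columns of $A$). For each $e_i$, invoke the algorithm of \Cref{lemma:farkas-compute} on the pair $(A,e_i)$; by Farkas' lemma (\Cref{lemma:farkas}) the output is either a directed circuit containing $e_i$ or a directed cocircuit containing $e_i$, and exactly one of the two options occurs. Record $e_i$ as belonging to the set $Z\subseteq E(\vec{M})$ if and only if the algorithm returns a directed cocircuit; equivalently, $e_i\in Z$ iff $e_i$ is not contained in any directed circuit of $\vec{M}$. By definition of $TC(\vec{M})$ we have $E(TC(\vec{M})) = E(\vec{M})\setminus Z$.

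Let $A'$ be the matrix obtained from $A$ by deleting all columns indexed by elements of $Z$. Since deletion in the oriented matroid $\vec{M}[A]$ corresponds to deletion of the associated columns in the representing matrix, we have $\vec{M}[A'] \simeq \vec{M}[A]-Z = TC(\vec{M})$, so $A'$ is a representative matrix of $TC(\vec{M})$. Moreover, any submatrix of a totally unimodular matrix is itself totally unimodular, so $A'$ is totally unimodular as required.

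For the running time: \Cref{lemma:farkas-compute} runs in time polynomial in $mn$, and we invoke it once per element of $E(\vec{M})$, giving at most $n$ invocations; the final column deletion step is trivially polynomial. Hence the overall procedure runs in time polynomial in $mn$. The only nontrivial ingredient is \Cref{lemma:farkas-compute} itself, which has already been established, so no further obstacle arises.
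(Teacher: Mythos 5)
Your proposal is correct and is exactly the argument the paper intends: the corollary is stated as an immediate consequence of \Cref{lemma:farkas-compute}, obtained by testing each element for membership in a directed circuit and deleting the columns of those elements that only lie in directed cocircuits, using that submatrices of totally unimodular matrices remain totally unimodular. Nothing further is needed.
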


The last ingredient we shall need for the proof of \Cref{thm:oddjoinsandevencuts} is a computational version of the first statement of \Cref{lemma:circuitspace} combined with \Cref{cor:matroidweighting}.

\begin{lemma}\label{lemma:compute_base_and_cover}
Let $\vec{M}$ be a totally cyclic regular oriented matroid represented by a totally unimodular matrix $A \in \{-1,0,1\}^{m \times n}$ for some $m \in \mathbb{N}$ and $n = |E(M)|$.
Then we can compute a directed circuit basis $\mathcal{B}$ of $\vec{M}$ together with a set $J \subseteq E(\vec{M})$ such that $|J \cap B| \equiv 1 (\text{mod }2)$ for every $B \in \mathcal{B}$ in time polynomial in $mn$.
\end{lemma}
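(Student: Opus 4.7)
The plan is to turn the inductive existence proofs of \Cref{lemma:circuitspace} and \Cref{cor:matroidweighting} into polynomial-time algorithms. First I would construct $\mathcal{B}$ by a recursive procedure mirroring the induction in \Cref{lemma:circuitspace}: pick an arbitrary element $e \in E(\vec{M})$, invoke \Cref{lemma:farkas-compute} to obtain a directed circuit $C_e$ through $e$ (which exists because $\vec{M}$ is totally cyclic), and then apply \Cref{cor:totallycyclic} to $\vec{M}-e$ to decide whether $\vec{M}-e$ is itself totally cyclic (equivalently, whether the ground set of $TC(\vec{M}-e)$ is all of $E(\vec{M})\setminus\{e\}$).

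If $\vec{M}-e$ is totally cyclic, recurse on its totally unimodular representation (obtained by deleting the column indexed by $e$) to produce a directed circuit basis $\mathcal{B}^-$ of $\vec{M}-e$, and return $\mathcal{B}:=\mathcal{B}^-\cup\{C_e\}$. Otherwise, compute a TU representation of $\vec{M}/e$ by pivoting on a nonzero entry of the column indexed by $e$ and then deleting this column together with the pivot row, a standard operation that preserves total unimodularity. The recursive call returns a directed circuit basis $\mathcal{B}^-$ of $\vec{M}/e$, and for each $C\in\mathcal{B}^-$ I lift it to either $C$ or $C\cup\{e\}$ as a directed circuit of $\vec{M}$; the analysis in the proof of \Cref{lemma:circuitspace} guarantees that exactly one of the two is a directed circuit of $\vec{M}$, and the correct choice can be identified in polynomial time by testing via Gaussian elimination on the original matrix $A$ whether the columns indexed by $C$ are linearly dependent. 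The recursion depth is at most $n$ and each level performs only polynomial work, hence $\mathcal{B}$ is produced in time polynomial in $mn$.

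With $\mathcal{B}$ in hand, computing $J$ reduces to solving the $\mathbb{F}_2$-linear system
\[
\sum_{f\in B} x_f\equiv 1\pmod{2}\qquad (B\in\mathcal{B})
\]
with unknowns $x_f\in\mathbb{F}_2$ indexed by $f\in E(\vec{M})$; one then reads off $J:=\{f:x_f=1\}$. The system has $|\mathcal{B}|\le n$ equations in $n$ variables and is guaranteed to be consistent by \Cref{cor:matroidweighting}, so Gaussian elimination over $\mathbb{F}_2$ yields a solution in polynomial time.

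The main technical obstacle is ensuring that every intermediate matrix remains totally unimodular so that \Cref{lemma:farkas-compute} and \Cref{cor:totallycyclic} can be applied throughout the recursion. Deletion is trivial, while for contraction one appeals to the classical fact that a single pivot on a nonzero entry of a TU matrix preserves total unimodularity. The only other delicate point is the lifting in the contraction case, but it is completely dictated by the dichotomy in the proof of \Cref{lemma:circuitspace} combined with one linear-dependence test on $A$ per basis circuit.
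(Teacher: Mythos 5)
Your proposal is correct, and your construction of the directed circuit basis $\mathcal{B}$ is essentially the paper's own argument: the same recursion mirroring \Cref{lemma:circuitspace}, using \Cref{lemma:farkas-compute} to find $C_e$ and to test total cyclicity of $\vec{M}-e$, and a pivot to obtain a totally unimodular representation of $\vec{M}/e$; your lifting test (linear dependence of the columns indexed by $C$) is equivalent to the paper's test $A\mathbf{1}_C=0$, since the dichotomy from \Cref{lemma:circuitspace} guarantees that exactly one of $C$, $C\cup\{e\}$ is a circuit, so $C$ is dependent precisely when it is itself the circuit. Where you genuinely diverge is the computation of $J$: the paper threads $J$ through the same recursion (adding $e$ to $J^-$ in the deletion step when $|J^-\cap C_e|$ is even, and simply keeping $J:=J^-$ in the contraction step), whereas you compute $\mathcal{B}$ first and then solve the parity system $\sum_{f\in B}x_f\equiv 1 \ (\mathrm{mod}\ 2)$, $B\in\mathcal{B}$, by Gaussian elimination over $\mathbb{F}_2$. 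This is a clean alternative; consistency indeed follows from \Cref{cor:matroidweighting}, and in fact already from the $\mathbb{F}_2$-linear independence of the incidence vectors of $\mathcal{B}$, so the system is solvable for any right-hand side. The paper's recursive update avoids solving a linear system and gives $J$ "for free" along the way, while your version decouples the two tasks and is arguably simpler to verify; both run in time polynomial in $mn$.
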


\begin{proof}
We shall follow the inductive proof of \Cref{lemma:circuitspace} to obtain a recursive algorithm for finding a desired directed circuit basis together with the desired set $J$.
If $n = 1$, the unique element $e$ of $E(\vec{M})$ is a directed loop, since $\vec{M}$ is totally cyclic, and forms our desired directed circuit basis of $\vec{M}$.
Furthermore, by setting $J :=  \{ e \}$ we also get our desired set.

In the case $n \ge 2$, let us fix an arbitrary element $e$ of $E(\vec{M})$ and compute a directed circuit $C_e$ of $\vec{M}$ containing $e$ by applying \Cref{lemma:farkas-compute}.
Also using \Cref{lemma:farkas-compute}, we can test in time polynomial in $mn$ whether $\vec{M} - e$ is totally cyclic.
If so, we fix $C_e$ as an element of our desired directed circuit base $\mathcal{B}$ of $\vec{M}$ and proceed as before with $\vec{M}-e$ instead of $\vec{M}$.
The set $J$ is updated as follows:
Suppose we have already computed a directed circuit base $\mathcal{B}^-$ and a set $J^-$ as in the statement of this lemma, but with respect to $\vec{M} - e$.
Then we set $\mathcal{B} :=  \mathcal{B}^- \cup \{ C_e \}$.
Now we check the parity of $|J^- \cap C_e|$ and set
\begin{align*}
J := 
\begin{cases}
J^-  & \mbox{if } |J^- \cap C_e| \equiv 1 \; (\text{mod }2) \\
J^- \cup \{ e \}  & \mbox{if } |J^- \cap C_e| \equiv 0 \; (\text{mod }2).
\end{cases}
\end{align*}
As $C_e$ is the only element of $\mathcal{B}$ that contains $e$, the set $J$ has odd intersection with every element of $\mathcal{B}$, as desired.

If $\vec{M} - e$ is not totally cyclic, we compute a totally unimodular representative matrix ${A' \in \{-1,0,1\}^{m \times (n-1)}}$ of $\vec{M}/e$.
This task can be executed in time polynomial in $mn$ \footnote{To compute $A'$, select a non-zero entry in the column of $A$ belonging to the element $e$. Pivoting on this element and exchanging rows transforms $A$ in polynomial time in $mn$ into a totally unimodular matrix $A'' \in \{-1,0,1\}^{m \times n}$ of $\vec{M}$ in which the column corresponding to the element $e$ of $\vec{M}$ is $(1,0,\ldots,0)^\top$. Then $\vec{M}[A]=\vec{M}[A'']$, and the matrix $A'$ obtained from $A''$ by deleting the first row is a totally unimodular representation of $\vec{M}/e$.}. 
Now $\vec{M}/e$ is totally cyclic as $\vec{M}$ is totally cyclic and we proceed as before with $\vec{M}/e$ instead of~$\vec{M}$.
However, when our recursive algorithm already yields a directed circuit basis $\mathcal{B}^-$ of $\vec{M}/e$ as well as a set $J^-$ for $\vec{M}/e$ as in the statement of this lemma, we know as argued in the proof of \Cref{lemma:circuitspace} that each element $C$ of $\mathcal{B}^-$ either is a directed circuit of $\vec{M}$ or $C \cup \{ e \}$ is a directed circuit of~$\vec{M}$.
Depending on this distinction we define our desired circuit basis $\mathcal{B}$ of $\vec{M}$ as in the proof of \Cref{lemma:circuitspace} via 
\begin{align*}
\mathcal{B}:= \{C \; | \; C \in \mathcal{B}^-\text{ circuit in }M\}\cup \{C \cup \{e\} \; | \; C \in \mathcal{B}^-, C \cup \{e\}\text{ circuit in }M\}.
\end{align*}
To decide for each element $C \in \mathcal{B}^-$ whether $C$ or $C \cup \{e\}$ is a directed circuit of $\vec{M}$ we calculate $A \mathbf{1}_C$ where $\mathbf{1}_C$ denotes the incidence vector of $C$ with respect to $A$.
Then $C$ forms a directed circuit of $\vec{M}$ if and only if $A \mathbf{1}_C = 0$.
As $|\mathcal{B}^-| = |\mathcal{B}| = |E(\vec{M})| - r(\vec{M})$ as argued in the proof of \Cref{lemma:circuitspace} and by \Cref{proposition:basesize}, we have to do at most $n$ of these computations to compute $\mathcal{B}$ from $\mathcal{B}^-$.
Regarding the set $J$ we can simply set $J :=  J^-$.
\end{proof}

We are now ready for the proof of \Cref{thm:oddjoinsandevencuts}.

\begin{proof}[Proof of \Cref{thm:oddjoinsandevencuts}]
Assume first we have access to an oracle deciding whether an oriented regular matroid given by a representing totally unimodular matrix is non-even.
Suppose we are given a regular oriented matroid $\vec{M}$ represented by a totally unimodular matrix $A \in \{-1,0,1\}^{m \times n}$ for some $m, n \in \mathbb{N}$ and we want to decide whether it contains a directed circuit of even size.

First we compute $TC(\vec{M})$, which can be done in time polynomial in $mn$ by \Cref{cor:totallycyclic}.
Now we use \Cref{lemma:compute_base_and_cover} to compute a directed circuit basis of $TC(\vec{M})$ in time polynomial in~$mn$.
Then we go through the $|E(TC(\vec{M}))|-r(TC(\vec{M}))$ many elements of the basis and check whether one of these directed circuits has even size.
If so, the algorithm terminates.
Otherwise, every member of the basis has odd size.
By \Cref{proposition:matroidequivalence} with $J :=  E(TC(\vec{M}))$, we know that $TC(\vec{M})$ contains no directed circuit of even size if and only if $TC(\vec{M})$ is non-even.
Since $TC(\vec{M})$ is the largest deletion minor of $\vec{M}$, which has the same directed circuits as $\vec{M}$, we know that $TC(\vec{M})$ is non-even if and only if $\vec{M}$ is non-even.
So we can decide the question using the oracle.

Conversely, assume we have access to an oracle which decides whether a given oriented regular matroid contains a directed circuit of even size.
Again, our first step is to compute $TC(\vec{M})$ using \Cref{cor:totallycyclic}.
By \Cref{lemma:compute_base_and_cover} we then compute a directed circuit basis of $TC(\vec{M})$ and a set $J \subseteq E(TC(\vec{M}))$ such that every circuit in the basis has odd intersection with $J$.

Let $\vec{M}'$ be the oriented matroid obtained from $TC(\vec{M})$ by duplicating every element $e \in E(TC(\vec{M})) \setminus J$ into two copies $e_1$ and $e_2$\footnote{In particular, we transform every signed circuit of $TC(\vec{M})$ into a signed circuit of $\vec{M}'$ by replacing every occurrence of an element $e \in E(TC(\vec{M})) \setminus J$ in a signed partition by the two elements $e_1, e_2$ in the same set of the signed partition.
It is not hard to see that this indeed defines an oriented matroid, which is still regular.}. This way, every directed circuit in $\vec{M}'$ intersects $E(\vec{M}') \setminus J$ in an even number of elements. Thus, for every directed circuit $C$ in $TC(\vec{M})$, the size of the corresponding directed circuit in $\vec{M}'$ is odd if and only if $|C \cap J|$ is odd.
Hence, $J$ intersects every directed circuit in $TC(\vec{M})$ an odd number of times if and only if $\vec{M}'$ contains no even directed circuit.
By \Cref{proposition:matroidequivalence} this shows that $TC(\vec{M})$ is non-even if and only if $\vec{M}'$ has no directed circuit of even size.
Since $TC(\vec{M})$ is non-even if and only if $\vec{M}$ is non-even, we can decide the non-evenness of $\vec{M}$ by negating the output of the oracle with instance $\vec{M}'$.
\end{proof}

With the tools developed in this section at hand we are ready for the proof of Proposition~\ref{prop:oddcycleproblem}.

\begin{proposition}\label{prop:oddcycleproblem}
There is an algorithm which given as input a totally unimodular matrix $A \in \mathbb{R}^{m \times n}$ for some $m, n \in \mathbb{N}$, either returns an odd directed circuit of $\vec{M}[A]$ or concludes that no such circuit exists, and runs in time polynomial in $mn$.
\end{proposition}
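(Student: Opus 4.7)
The plan is to exploit the directed circuit basis machinery developed in \Cref{lemma:compute_base_and_cover} together with a simple $\mathbb{F}_2$-parity argument. First, I would apply \Cref{cor:totallycyclic} to compute in polynomial time a totally unimodular representation of $TC(\vec{M}[A])$. Since $\vec{M}[A]$ and $TC(\vec{M}[A])$ have exactly the same directed circuits by construction, it suffices to detect odd directed circuits in the totally cyclic oriented matroid $TC(\vec{M}[A])$.

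Next, I would invoke \Cref{lemma:compute_base_and_cover} to compute in time polynomial in $mn$ a directed circuit basis $\mathcal{B}=\{B_1,\ldots,B_k\}$ of the underlying matroid of $TC(\vec{M}[A])$, where each $B_i$ is itself a directed circuit of $\vec{M}[A]$. The algorithm then examines the parity of $|B_i|$ for each $i\in\{1,\ldots,k\}$, outputs any $B_i$ of odd size as a certificate, and otherwise concludes that $\vec{M}[A]$ contains no odd directed circuit.

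The correctness of this procedure hinges on the following elementary observation: cardinality is additive modulo $2$ under symmetric difference, i.\@e.\@ $|X+Y|\equiv |X|+|Y|\pmod{2}$ for arbitrary finite sets $X,Y$. Consequently, for any $\mathbb{F}_2$-linear combination $C=\sum_{i\in I} B_i$ of basis elements we have $|C|\equiv \sum_{i\in I}|B_i|\pmod{2}$. Since $\mathcal{B}$ spans the circuit space of the underlying matroid of $TC(\vec{M}[A])$, every directed circuit $C$ of $\vec{M}[A]$ may be written in this form, so $|C|$ is odd if and only if at least one contributing $|B_i|$ is odd. In particular, the existence of an odd directed circuit in $\vec{M}[A]$ is equivalent to the existence of an odd-size member of $\mathcal{B}$, and the procedure detects this directly while simultaneously exhibiting such a circuit as a witness.

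The overall running time is polynomial in $mn$, as both \Cref{cor:totallycyclic} and \Cref{lemma:compute_base_and_cover} run in time polynomial in $mn$ and the parity scan over the at most $n$ basis elements takes negligible additional time. There is no genuine obstacle here once the tools of \Cref{sec:equivalence} are in place; the only substantive insight is that parity is preserved under $\mathbb{F}_2$-sums of circuits, which collapses the decision problem to inspecting the polynomial-size certificate provided by the directed circuit basis.
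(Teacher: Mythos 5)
Your proposal is correct and follows essentially the same route as the paper: compute $TC(\vec{M}[A])$ via \Cref{cor:totallycyclic}, compute a directed circuit basis via \Cref{lemma:compute_base_and_cover}, and use that cardinality is additive modulo $2$ under symmetric difference to reduce the problem to checking the parities of the basis circuits. One small imprecision: $|C|$ is odd if and only if an \emph{odd number} of the contributing $|B_i|$ are odd (not merely ``at least one''), but this does not affect correctness, since the direction you actually need---all basis circuits even implies all circuits even---follows from your congruence, and any odd basis circuit is itself a valid witness.
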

\begin{proof}
Let $A \in \mathbb{R}^{m \times n}$ be a totally unimdoular matrix given as input and let $\vec{M} := \vec{M}[A]$. To decide whether $\vec{M}$ contains a directed circuit of odd size, we first use \Cref{cor:totallycyclic} to compute a totally unimdoular representation of $TC(\vec{M})$ in polynomial time in $mn$. We now apply \Cref{lemma:compute_base_and_cover} to compute in polynomial time a directed circuit basis $\mathcal{B}$ of $TC(\vec{M})$. Going through the elements of $\mathcal{B}$ one by one, we test whether one of the basis-circuits is odd, in which case the algorithm stops an returns this circuit. Otherwise, all circuits in $\mathcal{B}$ are even. Since every circuit in the underlying matroid of $TC(\vec{M})$ can be written as a symmetric difference of elements of $\mathcal{B}$, every circuit in this matroid must be even. In particular, $TC(\vec{M})$ and hence $\vec{M}$ do not contain any odd directed circuits, and the algorithm terminates with this conclusion. 
\end{proof}

\section{Digraphs Admitting an Odd Dijoin}\label{sec:dijoin}

This section is dedicated to the proof of our main result, \Cref{thm:forbiddencographicminors}.
The overall strategy to achieve this goal is to work on digraphs and their families of bonds directly.
The object that certifies that the bond matroid of a digraph is non-even is called an \emph{odd dijoin}.

\begin{definition}
Let $D$ be a digraph. 
A subset $J \subseteq E(D)$ is called an \emph{odd dijoin} if $|J \cap S|$ is odd for every directed bond $S$ in $D$.
\end{definition}

Let $D$ be a digraph. 
The \emph{contraction} $D/A$ of an edge set $A \subseteq E(D)$ in $D$ is understood as the digraph arising from $D$ by deleting all edges of $A$ and identifying each weak connected component of $D[A]$ into a corresponding vertex. 
Note that this might produce new loops arising from edges spanned between vertices incident with $A$ but not included in $A$. Note that contracting a loop is equivalent to deleting the loop.

An edge $e=(x,y)$ of a digraph $D$, which is not a loop, is said to be \emph{deletable} (or \emph{transitively reducible}) if there is a directed path in $D$ starting in $x$ and ending in $y$ which does not use $e$. Note that an edge $e \in E(D)$ is deletable if and only if $e$ is a butterfly-contractible element of $M^\ast(D)$.

For two digraphs $D_1,D_2$, we say that $D_1$ is a \emph{cut minor} of $D_2$ if it can be obtained from $D_2$ by a finite series of edge contractions, deletions of deletable edges, and deletions of isolated vertices.

Our next lemma guarantees that the property of admitting an odd dijoin is closed under the cut minor relation.

\begin{lemma} \label{lemma:maintainment}
Let $D_1, D_2$ be digraphs such that $D_1$ is a cut minor of $D_2$.
If $D_2$ admits and odd dijoin, then so does $D_1$. 
\end{lemma}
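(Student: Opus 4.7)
My plan is to reduce the lemma to \Cref{prop:GBminorcontainment} by translating cut-minor operations on digraphs into \GB-minor operations on the associated oriented bond matroids. The first observation to record is that a digraph $D$ admits an odd dijoin if and only if its oriented bond matroid $M^\ast(D)$ is non-even; this is immediate from the definitions, because the directed circuits of $M^\ast(D)$ are exactly the directed bonds of $D$.

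Given this dictionary, it suffices to show that whenever $D_1$ is a cut minor of $D_2$, the oriented matroid $M^\ast(D_1)$ is a \GB-minor of $M^\ast(D_2)$. Since the cut-minor relation is defined as a finite sequence of three elementary operations, I would verify the translation for each separately. First, contracting an edge $e$ of $D$ yields $M^\ast(D/e) \simeq M^\ast(D) - e$ (as recalled in the preliminaries), which is an always-permitted deletion in $M^\ast(D)$. Second, deleting a deletable edge $e$ of $D$ yields $M^\ast(D-e) \simeq M^\ast(D)/e$, a contraction in $M^\ast(D)$; here the remark recorded immediately before the lemma statement — that an edge is deletable in $D$ precisely when it is butterfly-contractible in $M^\ast(D)$ — guarantees the contraction is a legal \GB-minor operation. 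Third, deleting an isolated vertex of $D$ does not affect $E(D)$ or any bond, so it corresponds to the trivial operation on $M^\ast(D)$.

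Stringing these together, $M^\ast(D_1)$ is obtained from $M^\ast(D_2)$ by a finite sequence of \GB-minor operations, and \Cref{prop:GBminorcontainment} immediately yields that $M^\ast(D_1)$ is non-even, i.e.\@ that $D_1$ admits an odd dijoin. The only non-bookkeeping input is the equivalence between deletability and butterfly-contractibility used in the second step, but this has already been justified in the paragraph preceding the lemma statement, so no new technical obstacle arises; the proof is essentially a clean bookkeeping combining previously established results.
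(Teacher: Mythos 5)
Your proposal is correct and follows exactly the paper's route: the paper's own (very terse) proof likewise applies \Cref{prop:GBminorcontainment} to $M^\ast(D_1)$ and $M^\ast(D_2)$, using the deletion/contraction dictionary from the preliminaries, the observation that deletable edges are precisely the butterfly-contractible elements of $M^\ast(D)$, and the fact that removing isolated vertices leaves the bond matroid unchanged. Your write-up simply spells out the bookkeeping that the paper leaves implicit.
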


\begin{proof}
The statement follows by applying \Cref{prop:GBminorcontainment} to $M^\ast(D_1)$ and $M^\ast(D_2)$, noting that deleting isolated vertices from a digraph does not change the induced oriented bond matroid.
\end{proof}

Our goal will be to characterise the digraphs admitting an odd dijoin in terms of forbidden cut minors. In the following, we prepare this characterisation by providing a set of helpful statements. For an undirected graph $G$, we define the \emph{cutspace} of $G$ as the $\mathbb{F}_2$-linear vector space generated by the bonds in $G$, whose addition operation is the symmetric difference and whose neutral element is the empty set. The following statements are all obtained in a striaghforward way by applying the oriented matroid results \Cref{lemma:circuitspace}, \Cref{cor:matroidweighting} respectively \Cref{proposition:matroidequivalence} to the oriented bond matroid $M^{\ast}(D)$ induced by $D$.

\begin{corollary} \label{cor:cutspace}
Let $D$ be a weakly connected and acyclic digraph with underlying multi-graph $G$.
Then the cut space of $G$ admits a basis $\mathcal{B}$ whose elements are the edge sets of minimal directed cuts in $D$. 
Moreover, if $A \subseteq E(D)$ is a set of edges such that $D/A$ is acyclic and $G[A]$ is a forest, then one can choose $\mathcal{B}$ such that every edge $e \in A$ appears in exactly one cut of the basis. \qed
\end{corollary}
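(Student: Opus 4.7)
The strategy is to apply \Cref{lemma:circuitspace} to the oriented bond matroid $\vec{M}^{\ast}(D)$, exploiting the dictionary that the signed circuits of $\vec{M}^{\ast}(D)$ are the directed bonds of $D$ while its signed cocircuits are the directed cycles of $D$; correspondingly, the circuit space of the underlying matroid $M^{\ast}(G)$ coincides with the cut space of $G$.

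To obtain the first assertion I would first check that $\vec{M}^{\ast}(D)$ is totally cyclic, i.e.\ that every edge of $D$ lies in a directed bond. This follows at once from \Cref{lemma:farkas} applied to $\vec{M}^{\ast}(D)$: an element lies in a directed circuit iff it is not contained in any directed cocircuit, and since $D$ is acyclic no such directed cocircuit can exist. The first part of \Cref{lemma:circuitspace} then produces a directed circuit basis of $M^{\ast}(G)$, whose members are precisely the edge sets of the minimal directed cuts of $D$, yielding the claimed basis of the cut space.

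For the ``moreover'' part I would invoke the second part of \Cref{lemma:circuitspace} applied to $\vec{M}^{\ast}(D)$ and the given set $A$. Two hypotheses must be verified: $A$ is coindependent in $M^{\ast}(D)$, and $\vec{M}^{\ast}(D) - A$ is totally cyclic. Coindependence in $M^{\ast}(D)$ is the same as independence in the graphic matroid $M(G) = (M^{\ast}(G))^{\ast}$, which is precisely the condition that $G[A]$ is a forest. Total cyclicity of $\vec{M}^{\ast}(D) - A \simeq \vec{M}^{\ast}(D/A)$ reduces, via the same Farkas argument as above, to the hypothesis that $D/A$ is acyclic. \Cref{lemma:circuitspace} then delivers a directed circuit basis of $M^{\ast}(D)$ in which every element of $A$ appears in exactly one circuit; translating back through the dictionary gives the desired refined cut basis. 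The only real care needed in this argument is the bookkeeping of the matroid--graph dictionary---in particular, the fact that cographic duality swaps deletion and contraction---so the corollary is essentially a direct transcription of \Cref{lemma:circuitspace} for the cographic oriented matroid of an acyclic digraph.
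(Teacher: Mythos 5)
Your proposal is correct and follows essentially the same route as the paper, which explicitly states that this corollary is obtained by a straightforward application of \Cref{lemma:circuitspace} to the oriented bond matroid $M^{\ast}(D)$ (hence the statement carries no written proof). Your verification of the hypotheses—total cyclicity of $\vec{M}^{\ast}(D)$ resp.\ $\vec{M}^{\ast}(D)-A\simeq \vec{M}^{\ast}(D/A)$ via \Cref{lemma:farkas} and acyclicity, and coindependence of $A$ via $G[A]$ being a forest—is exactly the intended bookkeeping of the graph--matroid dictionary.
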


\begin{corollary} \label{cor:weighting}
Let $D$ be a digraph and let $\mathcal{B}$ be a basis of the cut space consisting of minimal directed cuts. Then there is an edge set $J' \subseteq E(D)$ such that $|J' \cap B|$ is odd for all $S \in \mathcal{B}$. \qed
\end{corollary}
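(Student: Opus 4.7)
The plan is to mirror the proof of \Cref{cor:matroidweighting}, since this statement is essentially its translation to the graph setting via the correspondence between directed bonds of $D$ and directed circuits of the oriented bond matroid $M^\ast(D)$. The key input will be \Cref{lemma:oddandeven} applied to the family $\mathcal{F} := \mathcal{B}$ over the ground set $E := E(D)$.

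Concretely, I would argue as follows. By \Cref{lemma:oddandeven}, either (i) there exists $J' \subseteq E(D)$ with $|J' \cap B|$ odd for every $B \in \mathcal{B}$, in which case we are done, or (ii) there exist an odd $k \in \mathbb{N}$ and $B_1, \ldots, B_k \in \mathcal{B}$ with $\sum_{i=1}^k B_i = \emptyset$ (symmetric difference). The whole task is therefore to rule out (ii). But (ii) would exhibit a non-trivial $\mathbb{F}_2$-linear dependence among the incidence vectors of elements of $\mathcal{B}$ in the cutspace of the underlying multi-graph of $D$, directly contradicting the assumption that $\mathcal{B}$ is a basis of that cutspace. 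Hence (i) must hold and the desired $J'$ exists.

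Equivalently, one could invoke \Cref{cor:matroidweighting} as a black box, applied to the dual oriented matroid $M^\ast(D)$ (restricted, if necessary, to the support $\bigcup \mathcal{B}$, which is totally cyclic since each of its elements lies in a directed circuit of $M^\ast(D)$ coming from $\mathcal{B}$); the minimal directed cuts of $D$ are exactly the directed circuits of $M^\ast(D)$, so $\mathcal{B}$ is then a directed circuit basis and \Cref{cor:matroidweighting} yields $J'$. I would prefer the direct route via \Cref{lemma:oddandeven} because it avoids bookkeeping about loops of $D$ (which are coloops in $M^\ast(D)$ and hence obstruct total cyclicity) and isolates the only nontrivial fact being used, namely the $\mathbb{F}_2$-linear independence of $\mathcal{B}$.

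There is essentially no obstacle: the statement is a transparent corollary of \Cref{lemma:oddandeven} combined with linear independence, and the matroidal analog has already been worked out in the same style earlier in the paper.
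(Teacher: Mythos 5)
Your proposal is correct and matches the paper's approach: the paper obtains this corollary by applying \Cref{cor:matroidweighting} to the oriented bond matroid $M^\ast(D)$, which is exactly your second route, and your preferred direct argument merely inlines the same one-line proof (apply \Cref{lemma:oddandeven} and rule out alternative (ii) by the $\mathbb{F}_2$-linear independence of $\mathcal{B}$). Your remark about loops and total cyclicity is a fair observation about the black-box route, but it does not change the substance of the argument.
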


\begin{proposition} \label{proposition:equivalence}
 Let $D$ be a digraph, $\mathcal B$ be a basis of the cut space consisting of directed bonds, and let $J' \subseteq E(D)$ be such that $|B \cap J'|$ is odd for all $B \in \mathcal B$.
 Then the following statements are equivalent:
 \begin{enumerate}[label=(\roman*)]
  \item $D$ has an odd dijoin.
  \item If $B_1,\dotsc,B_k$ are directed bonds of $D$ with $k$ odd, then $\sum_{i=1}^k B_i \neq \emptyset$.
  \item Every directed bond of $D$ can be written as $\sum_{i=1}^k B_i$ with $k$ odd.
  \item $J'$ is an odd dijoin of $D$. \qed
 \end{enumerate}
\end{proposition}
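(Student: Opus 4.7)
My plan is to deduce this proposition as a direct consequence of Proposition~\ref{proposition:matroidequivalence} applied to the oriented bond matroid $\vec{M} := M^{\ast}(D)$. The underlying matroid $M^{\ast}(U(D))$ is cographic and therefore regular, so the matroidal proposition is available once the totally cyclic hypothesis is checked; no independent combinatorial argument on digraphs is needed.

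To carry out the deduction I would first set up the following dictionary. By definition of the oriented bond matroid, the directed signed circuits of $M^{\ast}(D)$ correspond bijectively to the directed bonds of $D$, with symmetric difference of directed bonds in $D$ matching symmetric difference of the corresponding circuits in $M^{\ast}(D)$. Hence the circuit space of $M^{\ast}(D)$ is, under the natural identification of elements with edges, exactly the cut space of $U(D)$, and the hypothesised $\mathcal{B}$ is literally a directed circuit basis of $M^{\ast}(D)$. Analogously, a set $J \subseteq E(D)$ is an odd dijoin of $D$ precisely when it meets every directed circuit of $M^{\ast}(D)$ in an odd number of elements, which is exactly the witness certifying that $M^{\ast}(D)$ is non-even. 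Translating statements~(i)--(iv) of Proposition~\ref{proposition:matroidequivalence} under this dictionary produces word-for-word statements~(i)--(iv) above, so all four equivalences transfer at once.

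The one technical point to address is the totally cyclic hypothesis: Proposition~\ref{proposition:matroidequivalence} requires that every element of $M^{\ast}(D)$ lies in a directed circuit, i.e., that every edge of $D$ belongs to some directed bond. This, however, is forced by the mere existence of the basis $\mathcal{B}$. Indeed, every edge $e \in E(D)$ is contained in some bond of $U(D)$, and this bond is expressible as a symmetric difference of members of $\mathcal{B}$; the edge $e$ then appears in an odd, and in particular nonzero, number of these members, so $e$ lies in at least one directed bond of $\mathcal{B}$. (If one prefers, one may simply restrict attention to the edges covered by $\bigcup \mathcal{B}$, which does not affect any of (i)--(iv).) With this observation the application of Proposition~\ref{proposition:matroidequivalence} is routine, and I do not anticipate any real obstacle beyond carefully recording the translation.
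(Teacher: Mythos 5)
Your proposal is correct and follows essentially the same route as the paper, which obtains Proposition~\ref{proposition:equivalence} exactly by applying Proposition~\ref{proposition:matroidequivalence} to the oriented bond matroid $M^\ast(D)$ under the dictionary directed bonds $\leftrightarrow$ directed circuits, cut space $\leftrightarrow$ circuit space, odd dijoin $\leftrightarrow$ witness of non-evenness. One small point: a loop of $D$ lies in no bond whatsoever, so your first argument for total cyclicity ("every edge is contained in some bond") only covers non-loop edges; your parenthetical fix --- restricting to the edges covered by $\bigcup\mathcal{B}$, which discards exactly the loops and affects none of (i)--(iv) --- is the clean way to close this and makes the deduction complete.
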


\subsection{Forbidden cut minors for digraphs with an odd dijoin}
${}$
\vspace{6pt} \newline
\indent Next we characterise the digraphs admitting an odd dijoin in terms of forbidden cut minors.
For this purpose, we identify the digraphs without an odd dijoin for which every proper cut minor has an odd dijoin.
We call such a digraph a \emph{minimal obstruction}.
A digraph~$D=(V,E)$ is said to be \emph{oriented} if it has no loops, no parallel, and no anti-parallel edges.
Furthermore, $D$ is called \emph{transitively reduced} if for every edge $e = (v,w) \in E$ the only directed path in $D$ starting at $v$ and ending in $w$ consists of $e$ itself, or equivalently, if no edge in $D$ is deletable.

We start with the following crucial lemma, which will be used multiple times to successively find the structure minimal obstructions must have.

\begin{lemma} \label{lemma:basicproperties}
Let $D$ be a minimal obstruction.
Then the underlying multi-graph $G$ of $D$ is 2-vertex-connected.
Furthermore, $D$ is oriented, acyclic, and transitively reduced.
\end{lemma}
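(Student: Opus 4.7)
The plan is to prove each of the four properties by contradiction, always with the same schema: assume $D$ contains the forbidden structure, exhibit an explicit proper cut minor $D'$ of $D$, and show that $D'$ also has no odd dijoin, contradicting the minimality of $D$. Throughout I would rely on the elementary observation that no edge of a directed cycle can lie in a directed bond, since the edges of a directed cycle cannot all cross a vertex partition in the same direction; this is in essence the orthogonality property (\ref{equ:ortho}) applied inside $M^\ast(D)$.

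I would first establish acyclicity. Suppose $C$ is a directed cycle and pick $e\in C$; then $D/e$ is a proper cut minor. Via the identification $M^\ast(D/e)=M^\ast(D)-e$ and the observation above (no directed bond of $D$ contains $e$), each directed bond of $D$ corresponds bijectively to a directed bond of $D/e$ with the same underlying edge set. Hence any odd dijoin of $D/e$ is simultaneously an odd dijoin of $D$, contradicting minimality. The same contraction argument rules out loops (a loop is in no bond, so contracting it changes nothing bond-theoretic) and anti-parallel edge pairs (which form a directed $2$-cycle).

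Next I would prove transitive reducibility, which also excludes the remaining kind of non-oriented edge, namely parallel copies, since a parallel copy of $e$ immediately certifies $e$ as deletable. Let $e=(x,y)$ be deletable via a directed path $P$ in $D-e$, and let $J'$ be an odd dijoin of the proper cut minor $D-e$. I claim $J'$ is already an odd dijoin of $D$. For each directed bond $S=D[X,V\setminus X]$ of $D$, the case analysis is: if $e\in S$, then $P$ crosses the cut exactly once (it cannot cross back, as a directed cut has no reverse edges), so $S-e$ is nonempty; since $e$ is a cross edge its removal disturbs neither $U(D)[X]$ nor $U(D)[V\setminus X]$, so $(X,V\setminus X)$ remains a bond in $U(D-e)$ and $S-e$ a directed bond of $D-e$, giving $|J'\cap S|=|J'\cap(S-e)|$ odd. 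If $e\notin S$, then the endpoints of $e$ lie on the same side of the cut, say in $X$, and $P$ stays entirely in $X$ (any edge of $P$ exiting $X$ would require a reverse edge to return). So $e$ is not a bridge of $U(D)[X]$, hence $(X,V\setminus X)$ is still a bond of $U(D-e)$ and $S$ is a directed bond of $D-e$, so $|J'\cap S|$ is odd.

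Finally, for 2-vertex-connectedness, the cases $|V(G)|\le 2$ are ruled out by direct inspection once orientedness and acyclicity are in hand, since such a digraph has at most one edge and trivially admits an odd dijoin. Assume then $|V(G)|\ge 3$ and, for contradiction, that $v$ is a cut vertex with blocks $B_1,\ldots,B_k$, $k\ge 2$. The structural fact I would use is that every bond of $G$ lies inside a single block, because in any bond $D[X,V\setminus X]$ with $v\in X$ the connectedness of $G[V\setminus X]$ forces $V\setminus X$ into one $V(B_j)\setminus\{v\}$. Consequently, the disjoint union of odd dijoins of the sub-digraphs $D[B_i]$ would be an odd dijoin of $D$, so at least one $D[B_j]$ admits no odd dijoin. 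Contracting all edges outside $B_j$ and removing the resulting isolated vertices produces $D[B_j]$ as a proper cut minor of $D$, which is the final contradiction. The most delicate step is the case analysis in transitive reducibility, where the orientation of the directed cut must be exploited carefully to pin down how $P$ can interact with it; everything else is essentially bookkeeping about how cut-minor operations affect the bond structure.
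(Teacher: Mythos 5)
Your proposal is correct and follows essentially the same strategy as the paper: each property is obtained by exhibiting a proper cut minor and transferring its odd dijoin back to $D$, using the same key observations (a directed bond cannot meet a directed cycle, and a deletable edge can be removed from, or is absent from, any directed bond without destroying its being a directed bond). The only caveat concerns 2-vertex-connectedness: as written, your contradiction hypothesis is the existence of a cut vertex, but failure of 2-connectivity also includes the case that $G$ is disconnected with 2-connected components, which your case analysis does not mention; your block/union-of-dijoins argument covers that case verbatim, and the paper sidesteps the issue by instead assuming $G = G_1 \cup G_2$ with $|V(G_1) \cap V(G_2)| \le 1$, which uniformly captures both disconnection and cut vertices. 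The remaining deviations (contracting a single edge of a directed cycle rather than the whole cycle, and spelling out in detail why bonds survive the deletion of a deletable edge) are harmless and, if anything, more explicit than the paper's own argument.
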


\begin{proof}
Assume that $D$ has no odd dijoin, but every cut minor of $D$ has one.
Then it is easy to check that $|V(D)| \ge 4$.

To prove that $G$ must be $2$-vertex-connected, suppose towards a contradiction that $G$ can be written as the union of two proper subgraphs $G_1, G_2$ with the property that $|V(G_1) \cap V(G_2)| \le 1$.
Then the orientations $D_1,D_2$ induced on $G_1, G_2$ by $D$ are proper cut minors of $D$:
Indeed, for $i \in \{1,2\}$ we can obtain $D_i$ from $D$ by contracting all edges in $D_{2-i}$ and then deleting all the resulting isolated vertices outside $V(D_i)$.
Since $D_1, D_2$ are proper cut minors of $D$, they must admit odd dijoins $J_1,J_2$, respectively.
However, since $D_1$ and $D_2$ share at most a single vertex, the directed bonds of $D$ are either directed bonds of $D_1$ or of $D_2$.
Hence, the disjoint union $J_1 \cup J_2$ defines an odd dijoin of $D$ and yields the desired contradiction.

To prove acyclicity, assume towards a contradiction that there is a directed cycle $C$ in~$D$.
Let us consider the digraph $D/E(C)$.
This is a proper cut minor of $D$ and therefore must have an odd dijoin $J$.
However, the directed bonds in $D/E(C)$ are the same as the directed bonds in $D$ edge-disjoint from $C$, and since $C$ is directed, these are already all the directed bonds of $D$. Hence $J$ is an odd dijoin also for~$D$, which is a contradiction.

To prove that $D$ is transitively reduced, assume towards a contradiction that there was an edge $e=(x,y) \in E(D)$ and a directed path $P$ from $x$ to $y$ not containing $e$. 
Then $e$ is a deletable edge and $D-e$ is a cut minor of $D$, which therefore must have an odd dijoin $J \subseteq E(D) \setminus \{e\}$. 
Note that a directed cut in $D$ either does not intersect $\{e\} \cup E(P)$ at all or contains $e$ and exactly one edge from $P$.
It follows from this that for every directed bond $B$ in $D$,  we get that $B-e$ is a directed bond of $D-e$.
This directly yields that $J$ is also an odd dijoin of $D$, contradiction.

Clearly, the fact that $D$ is oriented follows from $D$ being simultaneously acyclic and transitively reduced.
This concludes the proof of the lemma.
\end{proof}

From this, we directly have the following useful observations.

\begin{corollary} \label{cor:contract}
Let $D$ be a minimal obstruction.
Then for every edge $e \in E(D)$, the digraph $D/e$ is acyclic.
Similarly, for every vertex $v \in V(D)$ which is either a source or a sink, the digraph $D / E(v)$, with $E(v) :=  D[ \{ v \},V(D) \setminus \{ v \}]$, is acyclic.
\end{corollary}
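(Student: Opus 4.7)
The plan is to argue by contradiction in each case, leveraging the three structural facts granted by \Cref{lemma:basicproperties}: $D$ is acyclic, oriented, and transitively reduced. A directed cycle that survives the contraction must use the newly formed vertex, and lifting such a cycle back to $D$ will be seen to produce either a directed cycle or a transitive shortcut in $D$.

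For the first assertion, I would fix $e = (x, y) \in E(D)$ and suppose toward a contradiction that $D/e$ contains a directed cycle $C$. Since $D$ itself is acyclic, $C$ must traverse the identified vertex $v_e$; let $(w, v_e)$ and $(v_e, w')$ be its two edges at $v_e$. Lifting these back to $D$ yields one of four combinations according to whether the lifted tail/head is $x$ or $y$. Three of the four cases (those where the lifted edges share the same endpoint of $e$, or use $(w,x)$ together with $(y,w')$) close up to a directed cycle in $D$, contradicting acyclicity; the remaining case, $(w,y)$ together with $(x, w')$, produces a directed $x$--$y$ path in $D$ not using $e$, contradicting transitive reducedness. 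This is routine once the case list is set up.

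For the second assertion, I would treat the source case, the sink case being symmetric (reverse all edges). Let $N^+(v) = \{u_1, \ldots, u_k\}$ and let $v^*$ be the vertex obtained in $D/E(v)$ by identifying $v$ with $N^+(v)$ (the star $D[E(v)]$ is weakly connected). Suppose $D/E(v)$ contains a directed cycle; since $D$ is acyclic, the cycle must pass through $v^*$. I would first rule out a self-loop at $v^*$: such a loop would lift to an edge $(u_i, u_j)$ inside $N^+(v)$, so the path $v \to u_i \to u_j$ would coexist with the direct edge $v \to u_j$, contradicting transitive reducedness. For a longer cycle $v^* \to x_1 \to \cdots \to x_m \to v^*$, note that because $v$ is a source and each $x_\ell \notin \{v\}\cup N^+(v)$, the edges at $v^*$ lift to edges $(u_j, x_1)$ and $(x_m, u_i)$ in $D$. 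If $i = j$ this yields a directed cycle in $D$; if $i \neq j$ the lifted walk is a directed $u_j$--$u_i$ path in $D$, which concatenated with the edge $v \to u_j$ gives a second $v$--$u_i$ directed path alongside the direct edge $v \to u_i$, again violating transitive reducedness.

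I do not expect a genuine obstacle: once \Cref{lemma:basicproperties} is available, the whole argument is a short case analysis on how a cycle in the contracted digraph lifts. The only point requiring mild care is to correctly enumerate the four (respectively three) ways the boundary edges at the contracted vertex can lift, and to verify the assertion that in the source case the intermediate vertices $x_1, \ldots, x_m$ lie outside $\{v\} \cup N^+(v)$, which is needed to conclude that the boundary edges at $v^*$ must be incident to the $u_i$'s rather than to $v$ itself.
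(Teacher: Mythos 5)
Your proof is correct and takes essentially the same route as the paper's: assume a directed cycle survives the contraction, lift it back to $D$, and contradict either acyclicity or transitive reducedness, both supplied by \Cref{lemma:basicproperties} (the paper phrases this more tersely as obtaining a directed path between the endpoints of $e$, resp.\ between two distinct out-neighbours of $v$). The only cosmetic omission is the case of a loop at the contracted vertex $v_e$ in the first assertion, which is ruled out immediately since $D$ is oriented (no parallel or anti-parallel edges) — or handled by the same dichotomy — so nothing substantive is missing.
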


\begin{proof}
Let $e$ be an edge of $D$.
Since $D$ is a minimal obstruction, we know by \Cref{lemma:basicproperties} that $e$ is no loop.
Now assume towards a contradiction that there was a directed cycle in $D/e$.
As $D$ itself is acyclic according to \Cref{lemma:basicproperties}, this implies that there is a directed path $P$ in $D$ connecting the end vertices of $e$, which does not contain $e$ itself.
This path together with $e$ now either contradicts the fact that $D$ is acyclic or the fact that $D$ is transitively reduced, both of which hold due to \Cref{lemma:basicproperties}.

For the second part assume w.\@l.\@o.\@g.\@ (using the symmetry given by reversing all edges) that $v$ is a source.
Suppose for a contradiction there was a directed cycle in $D/E(v)$.
This implies the existence of a directed path $P$ in $D-v$ which connects two different vertices in the neighbourhood of $v$, say it starts in $w_1 \in N(v)$ and ends in $w_2 \in N(v)$.
Now the directed path $(v,w_1)+P$ witnesses that the directed edge $(v,w_2)$ is deletable contradicting that $D$ is transitively reduced. 
This concludes the proof of the second statement. 
\end{proof}

\begin{lemma} \label{lemma:acycliccontract}
Let $D$ be a minimal obstruction.
If $A \subseteq E(D)$ is such that $D/A$ is acyclic and such that $D[A]$ is an oriented forest, then there is a directed bond in $D$ which fully contains $A$.
\end{lemma}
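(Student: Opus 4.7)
The plan is to argue by contradiction: assume no directed bond of $D$ contains $A$, and derive a contradiction by constructing an odd dijoin of $D$. The case $A = \emptyset$ is immediate, so assume $A \neq \emptyset$. First, apply \Cref{cor:cutspace} to the pair $(D, A)$, using that $D$ is acyclic by \Cref{lemma:basicproperties}, to obtain a basis $\mathcal{B} = \{B_a : a \in A\} \cup \mathcal{B}'$ of the cut space of the underlying graph of $D$ consisting entirely of directed bonds, in which each $a \in A$ lies only in $B_a \in \mathcal{B}$. The elements of $\mathcal{B}'$ are directed bonds of $D$ disjoint from $A$; when viewed in $D/A$ they form a basis of its cut space. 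Since $A \neq \emptyset$, the digraph $D/A$ is a proper cut minor of $D$ and so admits an odd dijoin by the minimality of $D$. Apply \Cref{cor:weighting} to pick $J' \subseteq E(D)$ with $|J' \cap B|$ odd for every $B \in \mathcal{B}$.

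The candidate odd dijoin of $D$ is $J := J' + A$. For any directed bond $B$ of $D$, writing $B = \sum_{a \in I(B)} B_a + \sum_{C \in J(B)} C$ in the basis $\mathcal{B}$, with $I(B) := A \cap B$ and $J(B) \subseteq \mathcal{B}'$, a direct parity computation yields
\[ |J \cap B| \equiv |J' \cap B| + |A \cap B| \equiv \bigl(|I(B)| + |J(B)|\bigr) + |I(B)| \equiv |J(B)| \pmod{2}, \]
so it suffices to prove that $|J(B)|$ is odd for every directed bond $B$ of $D$. When $B \cap A = \emptyset$, $B$ becomes a directed bond of $D/A$ with basis expansion $B = \sum_{C \in J(B)} C$ in $\mathcal{B}'$, and \Cref{proposition:equivalence}\,(iii) applied to $D/A$ immediately gives $|J(B)|$ odd.

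The remaining case $B \cap A \neq \emptyset$ is the main obstacle, and this is where the hypothesis that no directed bond contains $A$, equivalently $I(B) \subsetneq A$, becomes essential. The plan is to consider the auxiliary cut $B^{\star} := B + \sum_{a \in I(B)} B_a$, which is an element of the cut space of $D$ disjoint from $A$ and hence descends to a cut of $D/A$ whose $\mathcal{B}'$-expansion has the same length $|J(B)|$; the goal is then to show that $B^{\star}$ can be written as a sum of an odd number of directed bonds of $D/A$, so that \Cref{proposition:equivalence}\,(ii) applied to $D/A$ delivers that $|J(B)|$ is odd. Executing this step requires a careful analysis of how the directed bond $B$ partitions each tree component of $D[A]$: either the component lies entirely on one side of $B$, or it is split in a way compatible with the directed edges of $D[A]$ contained in $B$. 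The acyclicity of $D/A$ and the transitive reducedness of $D$ from \Cref{lemma:basicproperties} constrain the admissible splits, and the strict inclusion $I(B) \subsetneq A$ rules out the degenerate case in which $B^{\star}$ vanishes. Once this parity claim is established, $J$ is an odd dijoin of $D$, contradicting $D$'s minimality and completing the proof.
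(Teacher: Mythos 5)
There is a genuine gap here, and in fact the certificate you propose cannot work. Your candidate odd dijoin $J=J'+A$ already fails on the basis bonds themselves: for any $a\in A$, the basis element $B_a\in\mathcal{B}$ is itself a directed bond of $D$, its expansion in $\mathcal{B}$ is just $B_a$, so $I(B_a)=\{a\}$ and $J(B_a)=\emptyset$, and hence $|J\cap B_a|\equiv |J'\cap B_a|+|A\cap B_a|\equiv 1+1\equiv 0\pmod 2$ (note $B_a\cap A=\{a\}$, as the construction behind \Cref{cor:cutspace} provides and as your claim that $\mathcal{B}'$ is a basis of the cut space of $D/A$ forces). When $|A|\ge 2$ --- the only interesting case, since for $|A|=1$ the bond $B_a$ already contains $A$ --- the bond $B=B_a$ is perfectly compatible with your standing assumption that no directed bond contains $A$, so the statement you defer to the ``main obstacle'' case, namely that $|J(B)|$ is odd for every directed bond meeting $A$, is simply false; no analysis of how $B$ splits the components of $D[A]$ can rescue it. Two further problems in that deferred step: your remark that $I(B)\subsetneq A$ rules out $B^{\star}=\emptyset$ is incorrect (again $B=B_a$ gives $B^{\star}=\emptyset$ with $I(B)=\{a\}\subsetneq A$), and the step is circular anyway, because $D/A$ has an odd dijoin and every element of $\mathcal{B}'$ is a directed bond of $D/A$, so ``$B^{\star}$ is a sum of an odd number of directed bonds of $D/A$'' is \emph{equivalent} to ``$|J(B)|$ is odd''; the plan restates the goal rather than proving it.

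The paper's proof takes a different and non-contradictory route that you could adopt: since $D$ is a minimal obstruction it has no odd dijoin at all, so $J'$ itself must fail on some directed bond $B_0$, i.e.\ $|B_0\cap J'|$ is even; writing $B_0=B_1+\dots+B_m$ with distinct $B_1,\dots,B_m\in\mathcal{B}$ forces $m$ to be even, and contracting $E(D)\setminus\bigcup_{i=1}^m B_i$ yields a cut minor in which $B_0,B_1,\dots,B_m$ are still directed bonds summing to $\emptyset$ with $m+1$ odd, so by \Cref{proposition:equivalence} that cut minor has no odd dijoin. Minimality then forces $\bigcup_{i=1}^m B_i=E(D)$, so each $a\in A$ lies in exactly one $B_i$ and therefore in $B_0$, giving the desired directed bond $B_0\supseteq A$ directly instead of attempting to certify $J'+A$ as an odd dijoin.
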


\begin{proof}
By \Cref{cor:cutspace} there is a basis $\mathcal{B}$ of the cut space consisting of directed bonds such that each $e \in A$ is contained in exactly one of the bonds in the basis.
Moreover, by \Cref{cor:weighting} there is $J' \subseteq E(D)$ such that each $B \in \mathcal{B}$ has odd intersection with~$J'$.
Since $D$ has no odd dijoin, there has to be a directed bond $B_0$ in $D$ such that $|B_0 \cap J'|$ is even.
Let $B_0=B_1+\ldots+B_m$ be the unique linear combination with pairwise distinct $B_1,\ldots,B_m \in \mathcal{B}$.
Clearly, $m$ must be even.
Let $D'$ be the cut minor obtained from $D$ by contracting the edges in $E(D) \setminus \bigcup_{i=1}^{m}{B_i}$.
The bonds $B_0,B_1,\ldots,B_m$ are still directed bonds in $D'$ and satisfy $B_0+\ldots+B_m=\emptyset$, while $m+1$ is odd.
The equivalence of (i) and (ii) in \Cref{proposition:equivalence} now yields that $D'$ has no odd dijoin.
By the minimality of $D$ we thus must have $D=D'$ and $\bigcup_{i=1}^{m}{B_i}=E(D)$.
It follows that every $e \in A$ is contained in exactly one of the bonds $B_i$ and thus also in $B_0$.
Therefore, $B_0 \supseteq A$.
\end{proof}

\begin{corollary} \label{no separating directed cut}
Let $D = (V, E)$ be a minimal obstruction.
For $i \in \{1,2\}$ let $\emptyset \neq A_i \subseteq E$ be such that $D[A_i]$ is a forest and $D/A_i$ is acyclic.
Suppose there is a directed cut~$\Cut{}{X}$ in $D$ separating $A_1$ from $A_2$, i.\@e.\@, such that $A_1 \subseteq E(D[X])$ and $A_2 \subseteq E(D[V \setminus X])$. Then there exists a directed bond in $D$ containing $A_1 \cup A_2$.
\end{corollary}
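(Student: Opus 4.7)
The plan is to deduce the corollary directly from \Cref{lemma:acycliccontract} applied to the combined set $A := A_1 \cup A_2$. For this, two conditions need to be verified: that $D[A]$ is an oriented forest, and that $D/A$ is acyclic.

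The first condition is the easy one. Since $A_1 \subseteq E(D[X])$ and $A_2 \subseteq E(D[V\setminus X])$, the two edge sets are incident to disjoint vertex subsets, so $D[A]$ is the disjoint union of the two forests $D[A_1]$ and $D[A_2]$ and therefore itself a forest (with the orientation inherited from $D$, which is oriented by \Cref{lemma:basicproperties}).

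The main step is to show that $D/A$ is acyclic. Here I would use that the directed cut $\Cut{}{X}$ is entirely disjoint from $A_1 \cup A_2$, so the contraction $D/A$ inherits a directed cut separating the image of $X$ from the image of $V\setminus X$. Suppose towards a contradiction that $D/A$ contains a directed cycle $C$. Since $C$ cannot traverse a directed cut in both directions, it must lie entirely on one side. If $C$ lies on the $X$-side, then, since $A_2 \subseteq E(D[V\setminus X])$ is invisible on that side, $C$ is in fact a directed cycle of $(D/A_1)[X] \subseteq D/A_1$, contradicting the assumption that $D/A_1$ is acyclic. By symmetry, if $C$ lies on the $(V\setminus X)$-side we obtain a directed cycle in $D/A_2$, again a contradiction.

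With both conditions established, \Cref{lemma:acycliccontract} applied to $A = A_1 \cup A_2$ yields a directed bond of $D$ that fully contains $A_1 \cup A_2$, which is exactly the claim. I do not anticipate any real obstacle here beyond carefully verifying that contracting the interior edge sets $A_1, A_2$ preserves $\Cut{}{X}$ as a directed cut in $D/A$; the rest is a straightforward assembly of the previous lemmas.
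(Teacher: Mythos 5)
Your proposal is correct and follows essentially the same route as the paper: take $A = A_1 \cup A_2$, note $D[A]$ is a forest since the two forests are vertex-disjoint, use the directed cut $\Cut{}{X}$ to conclude that any directed cycle of $D/A$ lies entirely on one side and hence already exists in $D/A_1$ or $D/A_2$, and then apply \Cref{lemma:acycliccontract}. No gaps beyond the minor notational point that the $X$-side of $D/A$ is the image of $X$ under contraction, which you handle correctly.
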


\begin{proof}
 Let $A :=  A_1 \mathbin{\dot\cup} A_2$.
 As $A_1$ and $A_2$ induce vertex-disjoint forests, $D[A]$ is a forest as well.
 Since no edge is directed from a vertex in $V \setminus X$ to a vertex in $X$, no directed circuit in $D/A$ can contain a contracted vertex from $A_1$ and a contracted vertex from $A_2$, so every directed circuit must already exist in $D/A_1$ or in $D/A_2$.
 Because these two digraphs are acyclic, $D/A$ is acyclic.
 Hence, by \Cref{lemma:acycliccontract}, $A$ is fully included in a directed bond of $D$. This proves the assertion.
\end{proof}

With the next proposition we shall make the structure of minimal obstructions much more precise.
To state the result, we shall make use of the following definition.

\begin{definition}
 Let $n_0, n_1, n_2 \in \mathbb{N}$.
 Then we denote by $\mathcal D(n_0,n_1,n_2)$ the digraph $(V,E)$, where $V = V_0 \mathbin{\dot\cup} V_1 \mathbin{\dot\cup} V_2$ with $V_i = [n_i]$ for $i \in \{1,2,3\}$, and $E = (V_0 \times V_1) \mathbin{\dot\cup} (V_1 \times V_2)$.
\end{definition}

\begin{proposition} \label{prop:threenumberreduction}
Let $D = (V, E)$ be a minimal obstruction.
Then $D$ is isomorphic to $\mathcal{D}(n_1,n_2,n_3)$ for some integers $n_1,n_2,n_3 \ge 0$.
\end{proposition}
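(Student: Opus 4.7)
The approach is to refine the structure of $D$ in four stages by repeatedly applying \Cref{lemma:acycliccontract} to carefully chosen forests of edges: first I would establish a three-layer topological structure, then rule out shortcut edges between the extremal layers, and finally prove complete bipartiteness between consecutive layers. For the first stage I would prove that $D$ contains no directed path of length three. Suppose for contradiction such a path $v_0 \to v_1 \to v_2 \to v_3$ exists and set $A = \{(v_0,v_1),(v_2,v_3)\}$, a forest of two vertex-disjoint edges. I would verify that $D/A$ is acyclic: any directed cycle in $D/A$ must traverse one of the identified pairs $\{v_0,v_1\}$ or $\{v_2,v_3\}$, and its lift to $D$ is either a directed cycle in $D$ (contradicting acyclicity from \Cref{lemma:basicproperties}) or a directed path in $D$ witnessing that one of the contracted edges is deletable (contradicting transitive reducedness). \Cref{lemma:acycliccontract} then yields a directed bond $\partial(X) \supseteq A$ with $v_0,v_2 \in X$ and $v_1,v_3 \notin X$, but $(v_1,v_2)$ runs from $V \sm X$ to $X$, violating the directed cut property. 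Defining $V_0$ to be the sources of $D$, $V_2$ the sinks, and $V_1 = V \sm (V_0 \cup V_2)$, the absence of length-three paths rules out edges internal to $V_1$, so every edge lies in $V_0 \times V_1$, $V_1 \times V_2$, or $V_0 \times V_2$.

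Next I would rule out direct edges from $V_0$ to $V_2$. Suppose $(u,v) \in E$ with $u \in V_0$, $v \in V_2$. If $(u,v)$ were deletable, then any odd dijoin $J$ of the cut minor $D - (u,v)$ would also be an odd dijoin of $D$, because every directed bond of $D$ containing $(u,v)$ restricts to a directed bond of $D - (u,v)$ on removing $(u,v)$, and $(u,v) \notin J$ preserves parities. This contradicts minimality, so no $w \in V_1$ satisfies $(u,w),(w,v) \in E$. By $2$-connectivity, choose $v' \in N^+(u) \sm \{v\}$ and $u' \in N^-(v) \sm \{u\}$, and apply \Cref{lemma:acycliccontract} to $A = \{(u,v'),(u',v)\}$. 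One verifies $D/A$ acyclic by lift analysis, crucially using the nondeletability of $(u,v)$ to rule out short cycles through $V_1$. The resulting bond $\partial(X) \supseteq A$ must also contain $(u,v)$, and a case analysis on whether $v' \in V_1$ or $V_2$ and $u' \in V_0$ or $V_1$, combined with the in-neighbor closure imposed by the directed cut condition and the connectedness requirements on $D[X]$ and $D[V \sm X]$, yields a contradiction. I expect this case analysis to be the main technical obstacle.

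For the last stage, I would show that the bipartite subgraphs between $V_0$ and $V_1$, and between $V_1$ and $V_2$, are complete. Suppose $(v_0,v_1) \notin E$ for some $v_0 \in V_0$ and $v_1 \in V_1$. By $2$-connectivity, pick $v_1' \in N^+(v_0)$ and $v_0' \in N^-(v_1)$ with $v_1' \ne v_1$ and $v_0' \ne v_0$, and apply \Cref{lemma:acycliccontract} to $A = \{(v_0,v_1'),(v_0',v_1)\}$. As before one verifies $D/A$ is acyclic, and the resulting bond $\partial(X) \supseteq A$, subject to in-neighbor closure and connectedness of both $D[X]$ and $D[V \sm X]$, is incompatible with the absence of $(v_0,v_1)$: after the previous stage there are no $V_0 \to V_2$ edges, so the two sources $v_0,v_0' \in X$ can be connected in $D[X]$ only through middle vertices of $X$, whose in-neighbor closure cascades into a contradiction with $v_1 \notin X$. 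The dual case $V_1 \times V_2 \subseteq E$ follows by reversing all edges. Combining the four stages gives $D \cong \mathcal{D}(n_0,n_1,n_2)$ for some $n_0,n_1,n_2 \ge 0$.
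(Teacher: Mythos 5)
There is a genuine gap, and it is located in your Stage 2. You define $V_0$ as the set of sources, $V_2$ as the set of sinks and $V_1$ as the rest, and then try to prove that $D$ has no edge from $V_0$ to $V_2$. With this layering that statement is false: odd one-directions $\vec{K}_{n_1,n_2}$ are themselves minimal obstructions (\Cref{lemma:verifyobstructions}), and in them \emph{every} edge goes from a source to a sink, i.e.\ from your $V_0$ to your $V_2$ (here $V_1=\emptyset$). Concretely, for $\vec{K}_{2,3}$ with sources $u,u'$ and an edge $(u,v)$, your construction takes $A=\{(u,v'),(u',v)\}$; but contracting $A$ creates a directed $2$-cycle through the edges $(u,v)$ and $(u',v')$, so $D/A$ is not acyclic and \Cref{lemma:acycliccontract} does not apply — the "lift analysis" you assert cannot succeed in general, because the conclusion it is meant to establish is false. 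The paper avoids this by layering differently: $V_i$ is the set of vertices at which the longest directed path ending there has length $i$, so a sink reachable only by single edges sits in $V_1$, not $V_2$; the absence of $V_0$--$V_2$ edges is then proved using that every $w\in V_2$ has an in-neighbour in $V_1$ by definition, together with the previously established completeness between $V_0$ and $V_1$ and transitive reducedness. Your argument also does not treat the case $V_1=\emptyset$ at all, where what must be shown is completeness between $V_0$ and $V_2$ rather than absence of such edges; as written, your four stages would force such a $D$ to be edgeless, contradicting $2$-connectivity.

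Beyond this, the proposal is incomplete where it matters most: in both Stage 2 and Stage 3 the actual contradiction is delegated to an unspecified "case analysis" resp.\ a "cascading" in-neighbour argument that you explicitly flag as the main technical obstacle but do not carry out. For comparison, the paper's Claims on adjacency (every $v\in V_1$ adjacent to every $u\in V_0$, and dually for $V_1,V_2$) are obtained by a short argument: take $X$ to be the set of vertices from which $v$ is reachable, observe that $\Cut{}{X}$ is a directed cut separating the edge $f=(u',v)$ from the star $E(u)$ of the source $u$, and invoke \Cref{no separating directed cut} to get a directed bond containing $E(u)\cup\{f\}$, contradicting minimality of bonds since $E(u)$ is itself a directed cut. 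Your Stage 1 (no directed path of length $3$) is essentially correct and matches the paper's first claim, but Stages 2 and 3 need to be redone along the paper's lines (or with a corrected layering) before the proposition follows.
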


\begin{proof}
We shall split the proof into several claims, starting with the following one.

\begin{claim}\label{claim:length3}
$D$ contains no directed path of length $3$.
\end{claim}

Suppose towards a contradiction that $v_0,e_1,v_1,e_2,v_2,e_3,v_3$ is a directed path of length $3$ in $D$ with $e_1=(v_0,v_1),e_2=(v_1,v_2), e_3=(v_2,v_3)$.
By \Cref{cor:contract}, $D/e_1$ and $D/e_3$ are acyclic.
Moreover, because $D$ is acyclic by \Cref{lemma:basicproperties}, the edge $e_2$ is contained in a directed cut~$\Cut{}{X}$ in $D$, separating $\{e_1\}$ and $\{e_3\}$.
By \Cref{no separating directed cut} this means that there is a directed bond $\partial(Y)$ in $D$ containing both $e_1$ and $e_2$.
This however means that $v_0, v_2  \in Y$ and $v_1, v_3 \notin Y$.
Hence, $e_2$ is an edge in $D$ starting in $V(D) \setminus Y$ and ending in $Y$, a contradiction since $\partial(Y)$ is a directed bond.
This completes the proof of \Cref{claim:length3}.
\vspace{6pt}

For $i \in \{ 0, 1, 2 \}$ let $V_i$ denote the set of vertices $v \in V$ such that the longest directed path ending in $v$ has length~$i$.
By definition of the $V_i$ and since $D$ is acyclic, there is no edge from a vertex in $V_i$ to a vertex in $V_j$ for $i \ge j$, as otherwise this would give rise to a directed path of length $i+1$ ending in a vertex of $V_j$.

By \Cref{claim:length3} we know that $V = V_0 \mathbin{\dot\cup} V_1 \mathbin{\dot\cup} V_2$ holds.
We move on by proving the following claim.

\begin{claim}\label{claim:all edges 0-1}
Every vertex~$v \in V_1$ is adjacent to every vertex~$u \in V_0$.
\end{claim}

Let $v \in V_1$ and $u \in V_0$.
Assume for a contradiction that $u$ is not adjacent to $v$.
By definition of $V_1$ there is an edge $f = (u',v)$ with $u' \in V_0$.
By \Cref{cor:contract}, $D/f$ and $D/E(u)$ are acyclic because $u$ is a source.
Let $X \supseteq \{u',v\}$ be the set of all vertices from which $v$ can be reached via a directed path.
Clearly $\Cut{}{X}$ is a directed cut in $D$.
As $u \in V_0 \setminus X$ is a source, we conclude that $\{u\} \cup N(u) \subseteq V \setminus X$.
This however means that the directed cut $\Cut{}{X}$ separates $f$ from the edges in $E(u)$.
By \Cref{no separating directed cut}, this means that there is a directed bond $\partial(Y)$ in $D$ containing $E(u) \cup \{f\}$.
Since $E(u) = \Cut{}{\{u\}}$ itself is a directed cut in $D$, this contradicts the fact that $\partial(Y)$ is an inclusion-wise minimal directed cut in $D$, and proves \Cref{claim:all edges 0-1}.
\vspace{6pt}

We proceed with another claim.

\begin{claim}\label{claim:no 2-layer edges}
$D$ does not contain any edge from $V_0$ to $V_2$.
\end{claim}

Let $u \in V_0$ and $w \in V_2$.
By definition of $V_2$ there is some $v \in V_1$ such that $(v,w) \in E$.
By \Cref{claim:all edges 0-1}, $(u,v) \in E$.
Because $D$ is transitively reduced by \Cref{lemma:basicproperties}, we obtain $(u,w) \notin E$.
So the proof of \Cref{claim:no 2-layer edges} is complete.
\vspace{6pt}

Now we come to the last claim we need for the proof of this proposition.

\begin{claim}
Every vertex~$v \in V_1$ is adjacent to every vertex~$w \in V_2$.
\end{claim}

Let $v \in V_1, w \in V_2$ and suppose for a contradiction that $w$ is not adjacent to $v$.
Let $f = (u,v)$ be an edge with $u \in V_0$.
By \Cref{lemma:acycliccontract}, $D/f$ and $D/E(w)$ are acyclic because $w$ is a sink.
Let $X \supseteq \{u,v\}$ be the set of all vertices from which $v$ can be reached via a directed path.
Again, $\Cut{}{X}$ forms a directed cut in $D$.
\Cref{claim:no 2-layer edges} implies that $N(w) \subseteq V_1 \setminus \{v\} \subseteq V \setminus X$.
This means $\Cut{}{X}$ separates $f$ from the edges in $E(w)$, contradicting \Cref{no separating directed cut} again.
\vspace{6pt}

By combining all four claims we obtain $E=(V_0 \times V_1) \dot{\cup} (V_1 \times V_2)$, and the proof of this proposition is complete.
\end{proof}

Now \Cref{prop:threenumberreduction} puts us in the comfortable situation that the only possible minimal obstructions to having an odd dijoin are part of a $3$-parameter class of simply structured digraphs.
The rest of this section is devoted to determine the conditions on $n_1,n_2,n_3$ that need to be imposed such that $\mathcal{D}(n_1,n_2,n_3)$ is a minimal obstruction.
It will be helpful to use the well-known concept of so-called $T$-joins.

\begin{definition}
Let $G$ be an undirected graph and $T \subseteq V(G)$ be some vertex set.
A subset $J \subseteq E(G)$ of edges is called a \emph{$T$-join}, if in the subgraph $H:= G[J]$ of $G$, every vertex in $T$ has odd, and every vertex in $V(G) \setminus T$ has even degree.
\end{definition}

The following result is folklore.

\begin{lemma} \label{lemma:tjoin}
A graph $G$ with some vertex set $T \subseteq V(G)$ admits a $T$-join if and only if $T$ has an even number of vertices in each connected component of $G$.
\end{lemma}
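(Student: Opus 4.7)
The plan is to handle the two directions separately, treating each connected component of $G$ independently since both the notion of $T$-join and the parity condition on $|T \cap V(C)|$ decompose over components.

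For the necessity direction, I would fix an arbitrary connected component $C$ of $G$ and consider the subgraph $H := (V(C), J \cap E(C))$. By the handshake lemma, $\sum_{v \in V(C)} \deg_H(v) = 2|J \cap E(C)|$ is even. By definition of a $T$-join, the vertices of odd degree in $H$ are precisely those of $T \cap V(C)$, and the number of odd-degree vertices in any graph is even. Hence $|T \cap V(C)|$ is even, as required.

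For the sufficiency direction, I would again work component by component, and it suffices to construct a $T$-join in each connected component separately (the union of these will be a $T$-join of $G$). So fix a connected component $C$ with $|T \cap V(C)|$ even, enumerate $T \cap V(C) = \{t_1, t_2, \ldots, t_{2m}\}$, and for each $i \in [m]$ choose an arbitrary path $P_i$ in $C$ from $t_{2i-1}$ to $t_{2i}$ (possible since $C$ is connected). I would then let $J_C$ be the symmetric difference $E(P_1) + E(P_2) + \cdots + E(P_m)$ and verify that $J_C$ is a $T \cap V(C)$-join of $C$. The key calculation is that for any vertex $v \in V(C)$, the degree in the symmetric difference satisfies $\deg_{J_C}(v) \equiv \sum_{i=1}^{m} \deg_{P_i}(v) \pmod{2}$, since an edge incident to $v$ lies in $J_C$ exactly when it lies in an odd number of the $E(P_i)$. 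Since $\deg_{P_i}(v)$ is odd if and only if $v$ is an endpoint of $P_i$, and because the pairing ensures every $t_j$ is an endpoint of exactly one path while every other vertex is an endpoint of none, the resulting parity is odd precisely on $T \cap V(C)$.

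The argument is essentially routine; there is no serious obstacle. The one small technical point to get right is the modulo-$2$ degree identity for the symmetric difference of edge sets, which should be stated cleanly (perhaps as a short preliminary observation). Taking $J := \bigcup_{C} J_C$ over all connected components then yields the desired $T$-join of $G$.
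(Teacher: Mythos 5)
Your proof is correct. Note that the paper does not prove this lemma at all -- it is stated as folklore -- so there is nothing to compare against; your argument (handshake lemma per component for necessity, pairing the terminals and taking the symmetric difference of connecting paths for sufficiency, with the mod-$2$ degree identity) is exactly the standard textbook proof and is complete as outlined, including in the multigraph setting of the paper, since loops and parallel edges do not affect the parity arguments.
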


We continue with an observation about odd dijoins in digraphs of the form $\mathcal{D}(n_1,n_2,0)$.

\begin{observation} \label{observation:obs}
Let $n_1,n_2 \ge 1$.
Then the digraph $\mathcal{D}(n_1,n_2,0) \simeq \mathcal{D}(0,n_1,n_2)$ has an odd dijoin if and only if $\text{min}(n_1,n_2) \leq 1$ or $n_1,n_2 \ge 2$ and $n_1 \equiv n_2 \text{ (mod }2)$.
\end{observation}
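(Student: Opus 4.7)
The plan is to reduce the question, via a careful identification of the directed bonds of $\mathcal{D}(n_1,n_2,0)$, to the classical $T$-join existence problem, and then apply \Cref{lemma:tjoin}. I would split the argument into two cases based on $\min(n_1,n_2)$.

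Write $V_0,V_1$ for the two parts of $\mathcal{D}(n_1,n_2,0)$, with $|V_0|=n_1$ and $|V_1|=n_2$, and every edge directed from $V_0$ to $V_1$. First I would observe that, since the only edges go from $V_0$ to $V_1$, a cut $\partial(X)$ is directed if and only if either $V_0\subseteq X$ (in which case $\partial(X)=V_0\times(V_1\setminus X)$) or $X\subseteq V_0$ (in which case $\partial(X)=X\times V_1$). This structural description drives the remaining analysis.

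If $\min(n_1,n_2)=1$, a direct inspection using the above dichotomy shows that the only inclusion-minimal directed cuts are the single-edge cuts $\{(u,v)\}$, since the unique candidate on the other side collapses to all of $E$, which properly contains each single-edge cut. Hence $J:=E(\mathcal{D}(n_1,n_2,0))$ is an odd dijoin regardless of parity.

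If $n_1,n_2\geq 2$, both families above contribute inclusion-minimal members; more precisely, the directed bonds are exactly the $n_2$ \emph{column cuts} $V_0\times\{v\}$ (for $v\in V_1$) together with the $n_1$ \emph{row cuts} $\{u\}\times V_1$ (for $u\in V_0$), since any strictly larger cut of the allowed form properly contains one of these. Consequently, a subset $J\subseteq E$ is an odd dijoin if and only if every $u\in V_0$ and every $v\in V_1$ has odd degree in the subgraph of $K_{n_1,n_2}$ induced by $J$, i.e., $J$ is a $T$-join for $T=V(K_{n_1,n_2})$. By \Cref{lemma:tjoin}, such a $T$-join exists iff $|T|=n_1+n_2$ is even in the single connected component of $K_{n_1,n_2}$, which is equivalent to $n_1\equiv n_2\pmod 2$.

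The main technical step is correctly identifying the directed bonds in the case $n_1,n_2\geq 2$; once that is in place, the translation to a $T$-join and the application of \Cref{lemma:tjoin} are immediate. The apparent anomaly at $\min(n_1,n_2)=1$ is explained precisely by the fact that in that regime one of the two families of candidate cuts collapses to the full edge set and is therefore non-minimal, removing the parity obstruction.
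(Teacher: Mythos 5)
Your proposal is correct and follows essentially the same route as the paper: identify the directed bonds (single edges when $\min(n_1,n_2)\le 1$, vertex stars when $n_1,n_2\ge 2$), take $J=E$ in the first case, and in the second case reduce to the existence of a $T$-join with $T$ the whole vertex set via \Cref{lemma:tjoin}. Your explicit dichotomy for directed cuts is just a more detailed justification of the bond description that the paper asserts directly.
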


\begin{proof}
If $\text{min}(n_1,n_2) \leq 1$, then all directed bonds in $\mathcal{D}(n_1,n_2,0)$ consist of single edges and thus, $J:= E(\mathcal{D}(n_1,n_2,0))$ defines an odd dijoin.
If $n_1,n_2 \ge 2$, the directed bonds in $\mathcal{D}(n_1,n_2,0)$ are exactly those cuts with one vertex on one side of the cut and all other vertices on the other side.
Hence, there is an odd dijoin if and only if the complete bipartite graph with partition classes of size $n_1,n_2$ has a $T$-join, where $T$ contains all $n_1+n_2$ vertices.
The statement is now implied by \Cref{lemma:tjoin}.
\end{proof}

Next we characterise when the digraphs $\mathcal{D}(n_1,n_2,n_3)$ admit an odd dijoin.

\begin{proposition} \label{proposition:threenumbers}
Let $n_1,n_2,n_3 \ge 1$ be integers.
Then $\mathcal{D}(n_1,n_2,n_3)$ has an odd dijoin if and only if one of the following holds:
\begin{enumerate}[label=(\roman*)]
\item $n_2=1$.
\item $n_2=2$ and $n_1 \equiv n_3 \text{ (mod }2)$.
\item $n_2 \ge 3$, and $n_1 \equiv n_3 \equiv 1 \text{ (mod }2)$.
\end{enumerate}
\end{proposition}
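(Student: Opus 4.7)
The plan is to apply the equivalence (i)~$\Leftrightarrow$~(ii) of \Cref{proposition:equivalence}: $D := \mathcal{D}(n_1, n_2, n_3)$ admits an odd dijoin if and only if no odd-length combination of its directed bonds cancels over $\mathbb{F}_2$. I would first classify the directed bonds of $D$, then exhibit explicit odd-length cancelling identities in each ``forbidden'' regime, and finally construct explicit odd dijoins in each ``allowed'' regime.

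A case analysis on which of $X \cap V_0, X \cap V_1, X \cap V_2$ are empty or full shows that, when all $n_i \geq 1$, the directed bonds of $D$ are exactly the in-stars $B_u := \{u\} \times V_1$ for $u \in V_0$, the out-stars $B^w := V_1 \times \{w\}$ for $w \in V_2$, and the \emph{middle cuts}
\begin{equation*}
B_A := (V_0 \times (V_1 \setminus A)) \cup (A \times V_2) \quad \text{for } \emptyset \neq A \subsetneq V_1,
\end{equation*}
with minimality of each being a direct check.

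For the forbidden regimes I would exhibit odd-length cancelling identities. The key computation $B_A + B_{A'} = V_0 \times (A \triangle A') + (A \triangle A') \times V_2$ over $\mathbb{F}_2$ yields, for $n_2 \geq 3$ and any two distinct $v_1, v_2 \in V_1$,
\begin{equation*}
B_{\{v_1\}} + B_{\{v_2\}} + B_{\{v_1,v_2\}} + \sum_{u \in V_0} B_u = 0,
\end{equation*}
a combination of length $n_1 + 3$, together with a symmetric identity of length $n_3 + 3$ involving $B_{V_1 \setminus \{v_1\}}, B_{V_1 \setminus \{v_2\}}, B_{V_1 \setminus \{v_1,v_2\}}$ and the $B^w$'s. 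Hence for $n_2 \geq 3$ with $n_1$ or $n_3$ even, \Cref{proposition:equivalence}(ii) rules out an odd dijoin. For $n_2 = 2$ the analogous identity $B_{\{v_1\}} + B_{\{v_2\}} + \sum_{u \in V_0} B_u + \sum_{w \in V_2} B^w = 0$ has length $n_1 + n_3 + 2$, which is odd precisely when $n_1 \not\equiv n_3 \pmod 2$, yielding the obstruction in that subcase.

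For the allowed regimes I would construct odd dijoins explicitly: for $n_2 = 1$ every directed bond is a single edge, so $J := E(D)$ works; for $n_2 \geq 3$ with $n_1, n_3$ both odd, the \emph{double star} $J := (V_0 \times \{v^*\}) \cup (\{v^*\} \times V_2)$ through any fixed $v^* \in V_1$ meets each $B_u$ and each $B^w$ in exactly one edge and meets $B_A$ in either $n_1$ or $n_3$ edges according to whether $v^* \notin A$ or $v^* \in A$, both odd by assumption; for $n_2 = 2$ the same double star handles the case where $n_1, n_3$ are both odd, and in the remaining case where both are even a small $\mathbb{F}_2$-adjustment of the column-parities of the $V_0 \to V_1$ layer and the row-parities of the $V_1 \to V_2$ layer (always possible because the relevant parity sums are consistent) yields the desired $J$. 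The main technical obstacles are the bond classification together with the minimality check (where the three layer sizes interact simultaneously) and the identification of the right cancelling identities in the forbidden regimes; the allowed-regime constructions are then routine verifications.
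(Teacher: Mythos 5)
Your proposal is correct, and it takes a partly different route from the paper. The bond classification you begin with (in-stars, out-stars, and the middle cuts $B_A$) is exactly the list the paper asserts without proof, and your obstruction argument is sound: the identities $B_{\{v_1\}}+B_{\{v_2\}}+B_{\{v_1,v_2\}}+\sum_{u\in V_0}B_u=\emptyset$ (and its two companions) are easily verified, and an odd-size family of directed bonds with empty symmetric difference is incompatible with an odd dijoin by the parity remark following \Cref{lemma:oddandeven}; note you only need this easy direction of \Cref{proposition:equivalence}, so the hypotheses there about a directed bond basis play no role. Where you diverge from the paper: for $n_2=2$ the paper passes to the planar dual, which is the bicycle $\Bidirected{C}_{n_1+n_3}$, and invokes \Cref{thm:seymthomtheorem}, whereas you argue directly inside $\mathcal{D}(n_1,2,n_3)$; for $n_2\ge 3$ the paper first shows that any odd dijoin must be a $T$-join with $T$ the union of the outer layers and then applies \Cref{lemma:tjoin}, whereas you get necessity from explicit odd cancelling families and sufficiency from the double star through a fixed middle vertex $v^*$ (which is just a particular such $T$-join). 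Your version is more elementary and self-contained; the paper's is shorter by leveraging planar duality and the $T$-join existence lemma. The only point where you are vague is the subcase $n_2=2$, $n_1\equiv n_3\equiv 0 \pmod 2$: the ``parity adjustment'' does work, but you should exhibit $J$ explicitly rather than appeal to ``consistent parity sums'' --- for instance, let every source send its unique $J$-edge to $v_1$ and let exactly one sink receive its unique $J$-edge from $v_2$, all other sinks from $v_1$; then every in-star and out-star meets $J$ once, and the two middle cuts meet $J$ in $n_3-1$ and $n_1+1$ edges respectively, all odd.
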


\begin{proof}
If $n_2=1$, then $\mathcal{D}(n_1,n_2,n_3)$ is an oriented star.
Clearly, here, the directed bonds consist of single edges, and therefore, $J :=  E(\mathcal{D}(n_1,1,n_3))$ defines an odd dijoin.

If $n_2=2$, it is easily seen that $\mathcal{D}(n_1,2,n_3)$ is a planar digraph, which admits a directed planar dual isomorphic to a bicycle $\Bidirected{C}_{n_1+n_3}$ of length $n_1+n_3$.
By planar duality, we know that $\mathcal{D}(n_1,2,n_3)$ has an odd dijoin if and only if there is a subset of edges of $\Bidirected{C}_k$ which intersects every directed cycle an odd number of times.
By \Cref{thm:seymthomtheorem} we know that such an edge set exists if and only if $n_1+n_3$ is even, that is, $n_1 \equiv n_3 \text{ (mod }2)$.

Therefore, we assume that $n_2 \ge 3$ for the rest of the proof.
We now first show the necessity of~(iii).
So assume that $D :=  \mathcal{D}(n_1,n_2,n_3)$ has an odd dijoin $J$.
We observe that the underlying multi-graph of $D$ is $2$-connected.
Hence, for every vertex $x \in V_1 \cup V_2 \cup V_3$, the cut $E(x)$ of all edges incident with $x$ is a minimal cut of the underlying multi-graph, and it is directed whenever $x \in V_1 \cup V_3$.
Therefore, $U(D[J])$ must have odd degree at every vertex in $V_1 \cup V_3$.
Moreover, we observe that for any proper non-empty subset $X \subsetneq V_2$, the cut in $D$ induced by the partition $(V_1 \cup X, (V_2 \setminus X) \cup V_3)$ is minimal and directed.
In the following, we denote this cut by $F(X)$.
Now for every vertex $x \in V_2$, choose some $x' \in V_2 \setminus \{x\}$ and consider the minimal directed cuts $F(\{x'\}), F(\{x,x'\})$.
Both are minimal directed cuts (here, we use that $n_2 \ge 3$) and thus must have odd intersection with~$J$.
Moreover, the symmetric difference $F(\{x'\})+F(\{x,x'\})$ contains exactly the set $E(x)$ of edges incident with $x$ in $D$.
We conclude the following:
\begin{align*}
|E(x) \cap J|=|(F(\{x'\})+F(\{x,x'\})) \cap J| & \equiv |F(\{x'\}) \cap J|+|F(\{x,x'\}) \cap J| \\
& \equiv 1+1 \equiv 0 \text{ (mod }2)
\end{align*}
As $x \in V_2$ was chosen arbitrarily, we conclude that $J$ must be a $T$-join of the underlying multi-graph of $\mathcal{D}(n_1,n_2,n_3)$ where $T = V_1 \cup V_3$.
Now \Cref{lemma:tjoin} implies that ${|T| = n_1+n_3}$ must be even and hence $n_1 \equiv n_3 \text{ (mod }2)$.

We claim that (iii) must be satisfied, i.\@e.\@, $n_1$ and $n_3$ are odd. 
Assume towards a contradiction that this is not the case.
Hence, by our observation above both $n_1$ and $n_3$ are even.
Let $x \in V_2$ be some vertex, and consider the directed bond $F(\{x\})$.
We can rewrite this bond as the symmetric difference of the directed cut ${\Cut{}{V_1}=\{(v_1,v_2) \; | \; v_1 \in V_1, v_2 \in V_2\}}$ and the cut $E(x)$ of all edges incident with $x$.
Because $|E(u) \cap J|$ is odd for every $u \in V_1$, we obtain that $|\Cut{}{V_1} \cap J|=\sum_{u \in V_1}{|E(u) \cap J|}$ must be even.
However, since also $|E(x) \cap J|$ is even, this means that $|F(\{x\}) \cap J| \equiv |\Cut{}{V_1} \cap J|+|E(x) \cap J| \equiv 0 \text{ (mod }2)$, which is the desired contradiction, as $J$ is an odd dijoin.
So (iii) must be satisfied.

To prove the reverse direction, assume that (iii) is fulfilled, i.\@e.\@, $n_1 \equiv n_3 \equiv 1 \text{ (mod }2)$.
We shall construct an odd dijoin of $\mathcal{D}(n_1,n_2,n_3)$.
For this purpose, we choose $J$ to be a $T$-join of the underlying multi-graph where $T=V_1 \cup V_3$.
We claim that this defines an odd dijoin of $\mathcal{D}(n_1,n_2,n_3)$.
It is not hard to check that the directed bonds of $\mathcal{D}(n_1,n_2,n_3)$ are the cuts $E(v)$ for vertices $v \in V_1 \cup V_3$ and the cuts $F(X)$ as described above, where $\emptyset \neq X \subsetneq V_2$.
By the definition of a $T$-join, all of the directed bonds of the first type have an odd intersection with $J$, so it suffices to consider the bonds of the second type.
Consider again the directed cut $\Cut{}{V_1}$ in $\mathcal{D}(n_1,n_2,n_3)$.
For any $\emptyset \neq X \subsetneq V_2$, we can write $F(X)$ as the symmetric difference $F(X)=\Cut{}{V_1}+\sum_{x \in X}{E(x)}$.
We therefore conclude that
\begin{align*}
|F(X) \cap J| & \equiv |\Cut{}{V_1} \cap J| + \sum_{x \in X}{\underbrace{|E(x) \cap J|}_{\text{even}}} \text{ (mod }2) \\
& \equiv |\Cut{}{V_1} \cap J| = \sum_{x \in V_1}{\underbrace{|E(x) \cap J|}_{\text{odd}}} \equiv n_1 \equiv 1 \text{ (mod }2).
\end{align*}
This verifies that $J$ is an odd dijoin, and completes the proof of the proposition.
\end{proof}

We shall now use these insights to characterise minimal obstructions.
For this let us first introduce new notation.

Let $D$ be a digraph consisting of a pair $h_1,h_2$ of ``hub vertices'' and other vertices $x_1,\ldots,x_n$, where $n \ge 3$, such that for every $i \in [n]$, the vertex $x_i$ has either precisely two outgoing or precisely two incoming edges to both $h_1,h_2$, and these are all edges of $D$. 
In this case, we refer to $D$ as a \emph{diamond}. 
Furthermore, we call any digraph isomorphic to $\vec{K}_{n_1,n_2}$ for some $n_1,n_2 \ge 2$, a \emph{one-direction}.
We shall call both, diamonds and one-directions, \emph{odd} if the total number of vertices of these digraphs is odd.

\begin{lemma} \label{lemma:verifyobstructions}
All odd diamonds and all odd one-directions are minimal obstructions.
\end{lemma}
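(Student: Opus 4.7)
The proof splits into an obstruction part and a minimality part. For the obstruction part: every odd one-direction $\vec{K}_{m,n}$ (with $m,n \ge 2$, $m+n$ odd) is isomorphic to $\mathcal{D}(m,n,0)$ and has $m \not\equiv n \pmod{2}$, so \Cref{observation:obs} immediately shows it has no odd dijoin. Similarly, every odd diamond is a $\mathcal{D}(n_1,2,n_3)$ with $n_1,n_3 \ge 1$ and $n_1+n_3$ odd, and item~(ii) of \Cref{proposition:threenumbers} gives that it has no odd dijoin.

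For the minimality part, I would apply \Cref{lemma:maintainment} to reduce the task to showing that every \emph{single-step} cut minor of an odd diamond or odd one-direction $D$---that is, $D$ with one edge contracted, one deletable edge deleted, or one isolated vertex deleted---admits an odd dijoin. Isolated vertices do not occur, as both families are connected. Deletable edges do not occur either: in a one-direction every edge runs from a source directly to a sink, so no alternative directed path exists; in a diamond, any alternative path from a source $s_i$ to a hub $h_k$ would have to pass through the other hub $h_{3-k}$, whose out-neighbourhood consists entirely of sinks (which emit no further edges), and edges of the form $(h_k,t_j)$ are ruled out analogously. Hence only edge contractions produce a proper cut minor.

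To analyse an edge contraction I would use the matroid identity $M^\ast(D/e) \simeq M^\ast(D)-e$ from the preliminaries, which implies that the directed bonds of $D/e$ are exactly the directed bonds of $D$ not containing $e$. In the one-direction case, by the vertex-symmetry of $\vec{K}_{m,n}$ it suffices to contract $e = (u_1,v_1)$. The surviving directed bonds are $\{E(u_i) : i \neq 1\} \cup \{E(v_j) : j \neq 1\}$, and the set
\[
J = \{(u_1,v_j) : j \neq 1\} \cup \{(u_i,v_1) : i \neq 1\}
\]
meets each of them in exactly one edge, so it is an odd dijoin of $D/e$.

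For an odd diamond $\mathcal{D}(n_1,2,n_3)$, the directed bonds enumerated in the proof of \Cref{proposition:threenumbers} are the $E(s_i)$, the $E(t_j)$, and the two bonds $F(\{h_1\})$ and $F(\{h_2\})$. Using source- and sink-permutation, hub-swap and edge-reversal symmetries, it suffices to handle $e = (s_1,h_1)$. The surviving directed bonds in $D/e$ are $\{E(s_i) : i \neq 1\} \cup \{E(t_j) : j \in [n_3]\} \cup \{F(\{h_1\})\}$, and the set
\[
J = \{(s_i,h_2) : i \in [n_1]\} \cup \{(h_1,t_j) : j \in [n_3]\},
\]
which coincides with $F(\{h_1\})$ as a set, satisfies $|J \cap E(s_i)| = |J \cap E(t_j)| = 1$ and $|J \cap F(\{h_1\})| = n_1+n_3$, which is odd. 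Hence $J$ is an odd dijoin of $D/e$. The main obstacle I anticipate is correctly identifying the directed bonds of $D/e$; the matroid-contraction correspondence resolves this cleanly, avoiding an ad hoc analysis of cuts in $D/e$ itself.
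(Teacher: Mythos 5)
Your proposal is correct, and its skeleton matches the paper's: the non-existence of odd dijoins is quoted from \Cref{observation:obs} and \Cref{proposition:threenumbers}, and minimality is reduced via \Cref{lemma:maintainment} to showing that every single edge contraction yields a digraph with an odd dijoin (the paper phrases your ``no deletable edges, no isolated vertices'' observation as the digraphs being $2$-connected, transitively reduced and acyclic). For one-directions you even pick the same certificate, namely all edges at the contraction vertex. The genuine difference is the odd diamond case: the paper argues by planar duality, observing that the directed planar dual of $\mathcal{D}(n_1,2,n_3)$ is the bicycle $\Bidirected{C}_{n_1+n_3}$, so that contracting an edge corresponds to deleting a dual edge, and then invokes \Cref{thm:seymthomtheorem} to conclude that the bicycle minus an edge is non-even; you instead exhibit an explicit odd dijoin, namely the edge set of the surviving bond $F(\{h_1\})$, and check parities by hand, using $M^\ast(D/e)\simeq M^\ast(D)-e$ to identify the directed bonds of $D/e$ with those of $D$ avoiding $e$. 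Your route is more elementary and self-contained (no duality, no appeal to the Seymour--Thomassen characterisation) and makes the bond bookkeeping rigorous through the matroid identity, at the price of enumerating the directed bonds of the diamond explicitly; note that the enumeration you cite from the proof of \Cref{proposition:threenumbers} is stated there under the assumption $n_2\ge 3$, so for $n_2=2$ you should verify it directly (it does hold: the directed bonds are exactly the $E(s_i)$, $E(t_j)$, $F(\{h_1\})$ and $F(\{h_2\})$), which is an easy check and not a real gap. The paper's duality argument is shorter but less explicit.
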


\begin{proof}
It is directly seen from \Cref{observation:obs} and \Cref{proposition:threenumbers} that indeed, odd diamonds and odd one-directions do not posses an odd dijoin.
Therefore it remains to show that all proper cut minors of these digraphs have odd dijoins.
Because both odd diamonds and odd one-directions are weakly $2$-connected, transitively reduced and acyclic, the only cut minor operation applicable to them in the first step is the contraction of a single edge.
By \Cref{lemma:maintainment} it therefore suffices to show that for both types of digraphs, the contraction of any edge results in a digraph admitting an odd dijoin.
We first consider odd diamonds.
Let $D=\mathcal{D}(n_1,2,n_3)$ with $n_1,n_2 \ge 1$ and $n_1+n_2$ odd, and let $e \in E(D)$ be arbitrary.
In the planar directed dual graph of $D$, an odd bicycle with $n_1+n_2$ vertices, there is a directed dual edge corresponding to $e$.
It is easily seen by duality that $D/e$ has an odd dijoin if and only if the odd bicycle of order $n_1+n_2 \ge 3$ with a single deleted edge has an edge set intersecting every directed cycle an odd number of times.
However, this is the case, because such a digraph is non-even by \Cref{thm:seymthomtheorem}.

Now we consider odd one-directions.
Let $D = \mathcal{D}(n_1,n_2,0)$ with $n_1,n_2 \ge 2$ and $n_1+n_2$ odd, and let $e=(x,y) \in E(D)$ be arbitrary.
Then in the digraph $D/e$, define $J$ to be the set of all edges incident with the contraction vertex.
It is easily observed that $J$ intersects every minimal directed cut exactly once and thus indeed, every proper cut minor has an odd dijoin.
This completes the proof.
\end{proof}

Now we are able to prove a dual version of \Cref{thm:seymthomtheorem} and characterise the existence of odd dijoins in terms of forbidden cut minors.

\begin{theorem}\label{thm:forbiddenminors}
A digraph admits an odd dijoin if and only if it does neither have an odd diamond nor an odd one-direction as a cut minor.
\end{theorem}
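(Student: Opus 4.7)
The plan is to prove both directions of the equivalence. The ``only if'' direction follows almost immediately by contraposition: if $D$ has an odd diamond or odd one-direction $H$ as a cut minor, then \Cref{lemma:verifyobstructions} tells us $H$ admits no odd dijoin, and \Cref{lemma:maintainment} lifts this to $D$.

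For the converse, I would assume $D$ admits no odd dijoin and produce a forbidden cut minor. Since each cut minor operation strictly decreases $|V|+|E|$, I can select a cut minor $D'$ of $D$ that is minimal under the cut minor relation among those admitting no odd dijoin; so $D'$ is a minimal obstruction, and by \Cref{prop:threenumberreduction} it has the form $\mathcal{D}(n_1, n_2, n_3)$. I would then perform a case analysis on $(n_1, n_2, n_3)$, invoking \Cref{observation:obs} when one of the outer layers is empty and \Cref{proposition:threenumbers} when all three parameters are at least $1$. Several cases are immediate: $n_2 = 0$ gives the empty digraph, which trivially admits an odd dijoin; if exactly one of $n_1, n_3$ is zero, \Cref{observation:obs} forces $n_1, n_2 \ge 2$ with $n_1 + n_2$ odd, which is an odd one-direction; the case $n_2 = 1$ with $n_1, n_3 \ge 1$ contradicts obstructionhood by \Cref{proposition:threenumbers}(i); and the case $n_2 = 2$, $n_1, n_3 \ge 1$, $n_1 + n_3$ odd is by (ii) exactly the class of odd diamonds. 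The only remaining case is $n_1, n_2, n_3 \ge 1$, $n_2 \ge 3$, and (by (iii)) at least one of $n_1, n_3$ being even.

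The hard part will be ruling out this last case. My plan is to exhibit a proper cut minor of $D'$ that still admits no odd dijoin, contradicting the minimality of $D'$. Using the edge-reversal symmetry $\mathcal{D}(n_1, n_2, n_3) \simeq \mathcal{D}(n_3, n_2, n_1)^{\mathrm{op}}$---which preserves both the cut minor relation and the odd-dijoin property---I would assume without loss of generality that $n_1$ is even. Fixing one vertex $v \in V_1$, I would then successively contract the $n_3$ edges from $v$ to $V_2$, identifying $v$ together with the whole of $V_2$ into a single new vertex $V'$. Because $n_2 - 1 \ge 2$, every remaining edge from $V_0$ to $V'$ is deletable through a path $u \to v' \to V'$ with $v' \in V_1 \setminus \{v\}$; after deleting all such edges as well as redundant parallel copies of the $n_2 - 1$ edges from $V_1 \setminus \{v\}$ to $V'$, the resulting digraph is isomorphic to $\mathcal{D}(n_1, n_2 - 1, 1)$. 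Since $n_1$ is even and $n_2 - 1 \ge 2$, \Cref{proposition:threenumbers} guarantees that $\mathcal{D}(n_1, n_2 - 1, 1)$ also has no odd dijoin, contradicting the minimality of $D'$. This forces $D'$ to be an odd diamond or an odd one-direction, and since $D'$ is a cut minor of $D$, the proof is complete.
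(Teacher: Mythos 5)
Your proposal is correct and takes essentially the same route as the paper: reduce to a minimal obstruction via \Cref{lemma:maintainment} and \Cref{lemma:verifyobstructions}, apply \Cref{prop:threenumberreduction}, settle the degenerate and $n_2\le 2$ cases via \Cref{observation:obs} and \Cref{proposition:threenumbers}, and kill the case $n_2\ge 3$ by exhibiting the proper cut minor $\mathcal{D}(n_1,n_2-1,1)$. The only difference is organisational: the paper constructs both $\mathcal{D}(1,n_2-1,n_3)$ and $\mathcal{D}(n_1,n_2-1,1)$ and uses the fact that they \emph{must} have odd dijoins to force $n_1\equiv n_3\equiv 1\ (\mathrm{mod}\ 2)$ before contradicting via \Cref{proposition:threenumbers}(iii), whereas you choose (after the reversal symmetry) the one minor whose parameters already violate \Cref{proposition:threenumbers} and contradict minimality directly, a slightly leaner endgame.
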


\begin{proof}
By \Cref{lemma:maintainment}, a digraph has an odd dijoin if and only if it does not contain a minimal obstruction as a cut minor.
Hence it suffices to show that a digraph $D$ is a minimal obstruction if and only if it is isomorphic to an odd diamond or an odd one-direction.
The fact that these digraphs indeed are minimal obstructions was proved in \Cref{lemma:verifyobstructions}.
So it remains to show that these are the only minimal obstructions.

Let $D$ be an arbitrary minimal obstruction.
By \Cref{prop:threenumberreduction} there are integers $n_1,n_2,n_3 \ge 0$ such that $D \simeq \mathcal{D}(n_1,n_2,n_3)$.
By the definition of a minimal obstruction, we know that $D$ has no odd dijoin, while for every edge $e \in E(D)$, the digraph $D/e$ is a cut minor of $D$ and therefore has one. 
We know due to \Cref{lemma:basicproperties} that $D$ is weakly $2$-connected.
Hence, we either have ${\text{min}(n_1,n_3) = 0}$, so (by symmetry) w.l.o.g. $n_3=0$, or $n_1,n_3 \ge 1$ and therefore $n_2 \ge 2$. 

In the first case, we know by \Cref{observation:obs} and using that $D$ has no odd dijoin, that $n_1,n_2 \ge 2$ and $n_1 \not\equiv n_2 \text{ (mod }2)$.
So $D$ is an odd one-direction, which verifies the claim in the case of $\text{min}(n_1,n_3)=0$. 

Next assume that $n_1,n_3 \ge 1$ and $n_2 \ge 2$.
Let $e=(x_1,x_2) \in E(D)$ with $x_i \in V_i$ for $i=1,2$ be an arbitrary edge going from the first layer $V_1$ to the second layer $V_2$.
Denote by $c$ the vertex of $D/e$ corresponding to the contracted edge $e$.
Then in the digraph $D/e$, all edges $\{(c,v_3) \; | \; v_3 \in V_3\}$ as well as all the edges in $\{(v_1,v_2) \; | \; v_1 \in V_1 \setminus \{x_1\},v_2 \in V_2 \setminus \{x_2\}\}$ admit parallel paths since $n_2 \ge 2$ and, therefore, are deletable.
Successive deletion yields a cut minor $D'$ of $D/e$, and thus of $D$, with vertex set
\begin{align*}
V(D') = (V_1\setminus \{x_1\}) \cup \{c\} \cup (V_2\setminus \{x_2\}) \cup V_3
\end{align*}
and edge set
\begin{align*}
E(D')\! =\! \{(v_1,c) \; | \; v_1 \in V_1\setminus\{x_1\}\} \!\cup\! \{(c,v_2) \; | \; v_2 \in V_2 \setminus\{x_2\}\} \!\cup\! \{(v_2,v_3) \; | \; v_2 \in V_2 \setminus \{x_2\}, v_3 \in V_3\}.
\end{align*}

Now after contracting all edges of $D'$ of the set $\{(v_1,c) \; | \; v_1 \in V_1\setminus\{x_1\}\}$ we find that $D'$, and hence $D$, has a proper cut minor isomorphic to $\mathcal{D}(1,n_2-1,n_3)$ with corresponding layers $\{c\}, V_2 \setminus \{x_2\}$ and $V_3$. 

Applying a symmetric argument (starting by contracting an edge going from $V_2$ to $V_3$), we find that $D$ also has a proper cut minor isomorphic to $\mathcal{D}(n_1,n_2-1,1)$. 

Using these insights, we now show that $n_2 = 2$ holds. 
Suppose for a contradiction that $n_2 \ge 3$ holds.
Assume first that $n_2 \ge 4$, and therefore $n_2 - 1 \ge 3$.
Using statement (iii) of \Cref{proposition:threenumbers} and that $\mathcal{D}(1,n_2-1,n_3)$ and $\mathcal{D}(n_1,n_2-1,1)$ both have odd dijoins, we must have $n_1 \equiv n_3 \equiv 1 \text{ (mod }2)$. 
In the case that $n_2=3$, we similarly observe from statement (ii) of \Cref{proposition:threenumbers} with the digraphs $\mathcal{D}(1,2,n_3)$ and $\mathcal{D}(n_1,2,1)$ that both $n_1$ and $n_3$ must be odd.
Now using statement (iii) of \Cref{proposition:threenumbers} with the digraph $D \simeq \mathcal{D}(n_1,n_2,n_3)$ we can conclude that $D$ must admit an odd dijoin as well, a contradiction.

Hence, we must have $n_2=2$.
Using again statement (ii) of \Cref{proposition:threenumbers} with $D \simeq \mathcal{D}(n_1,2,n_3)$, we get that $n_1+n_3$ must be odd.
Therefore $D$ is isomorphic to an odd diamond with $2+n_1+n_3$ many vertices.
This concludes the proof of the theorem.
\end{proof}

We are now ready to give the proof of \Cref{thm:forbiddencographicminors}.

\begin{proof}[Proof of \Cref{thm:forbiddencographicminors}]
Let $\vec{M}$ be an oriented bond matroid, and let $D$ be a digraph such that $\vec{M} \simeq M^\ast(D)$.
Let us first note that by definition, $\vec{M}$ is non-even if and only if $D$ has an odd dijoin.
Hence, for the equivalence claimed in this theorem it suffices to show that $D$ has an odd dijoin if and only if $M^\ast(D)$ does not have a \GB-minor isomorphic to $\vec{K}_{m,n}$ for $m, n \ge 2$ such that $m+n$ is odd.
Suppose first that $D$ has an odd dijoin and $M^\ast(D)$ is non-even. Then by \Cref{prop:GBminorcontainment}, every \GB-minor of $M^\ast(D)$ is non-even as well, and hence, no such minor can equal $M^\ast(\vec{K}_{m,n})$ for any $m, n \ge 2$ with $m+n$ is odd, since $\vec{K}_{m,n}$ does not have an odd dijoin for any such $m$ and $n$ by \Cref{lemma:verifyobstructions}.
This proves the first implication of the equivalence. 

Conversely, let us suppose that $M^\ast(D)$ does not have a \GB-minor isomorphic to $M^\ast(\vec{K}_{m,n})$ for any $m, n \ge 2$ such that $m+n$ is odd.
We shall show that $D$ admits an odd dijoin.
For this we use \Cref{thm:forbiddenminors} and verify that $D$ has neither an odd diamond nor an odd one-direction as a cut minor.
This however follows directly from the fact that the bond-matroid induced by any odd diamond of order $n$ is isomorphic to $M^\ast(\vec{K}_{2,n-2})$ as well as the easy observation that if $D'$ is a cut minor of $D$, then $M^\ast(D')$ is a \GB-minor of $M^\ast(D)$.
This finishes the proof of the claimed equivalence.
\end{proof}

\section{Concluding remarks}\label{sec:conclusion}
For every odd $k \ge 3$ it holds that $M(\Bidirected{C_k}) \simeq M^\ast(\vec{K}_{k,2}) \simeq M^\ast(\vec{K}_{2,k})$, and hence, the list of smallest excluded \GB-minors characterising non-evenness for cographic oriented matroids strictly extends the list for graphic ones.
We find this quite surprising and did not expect it when we initiated our research on the subject.

Seymour~\cite{seymregularsums} has proved a theorem about generating the class of regular matroids, showing that every regular matroid can be built up from graphic matroids, bond matroids and a certain $10$-element matroid $R_{10}$ by certain sum operations.
The matroid $R_{10}$ is regular, but neither graphic nor cographic.
It is given by the following totally unimodular representing matrix:
\begin{align*}
R_{10}=M\left[\left(\begin{matrix}
1 & 0 & 0 & 0 & 0 & -1 & 1 & 0 & 0 & 1 \cr
0 & 1 & 0 & 0 & 0 & 1 & -1 & 1 & 0 & 0 \cr
0 & 0 & 1 & 0 & 0 & 0 & 1 & -1 & 1 & 0 \cr
0 & 0 & 0 & 1 & 0 & 0 & 0 & 1 & -1 & 1 \cr
0 & 0 & 0 & 0 & 1 & 1 & 0 & 0 & 1 & -1 
\end{matrix} \right) \right]
.
\end{align*}

Seymour introduced three different kinds of sum operation which join two regular matroids $M_1$ and $M_2$ whose element sets are either disjoint (1-sum), intersect in a single non-loop element (2-sum) or in a common $3$-circuit (3-sum) into a bigger regular matroid $M_1 \Delta M_2$ (for a precise definition of these operations we refer to the introduction of \cite{seymregularsums}). 

\begin{theorem}[\cite{seymregularsums}]
Every regular matroid can be built up from graphic matroids, bond matroids and $R_{10}$ by repeatedly applying 1-sums, 2-sums and 3-sums.
\end{theorem}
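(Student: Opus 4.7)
The plan is to proceed by induction on $|E(M)|$, reducing an arbitrary regular matroid $M$ to smaller regular matroids via $k$-sum decompositions for $k \in \{1,2,3\}$, and then showing that any ``indecomposable'' regular matroid is necessarily graphic, cographic, or isomorphic to $R_{10}$. This is a deep classical theorem, and the strategy I would follow is essentially Seymour's original blueprint.

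First I would fix the $k$-sum operations precisely: the $1$-sum is a direct sum on disjoint ground sets, the $2$-sum glues along a shared element that is neither a loop nor a coloop, and the $3$-sum glues along a common triangle. A short linear-algebraic verification then shows that each of these operations preserves regularity; one simply block-concatenates TU matrices of the two summands along the shared elements with appropriate sign choices, and checks that the resulting matrix is still totally unimodular. With these sum operations in hand, induction reduces the theorem to the following structural claim: every regular matroid $M$ that admits no non-trivial $k$-sum decomposition for $k \in \{1,2,3\}$ is graphic, cographic, or $R_{10}$.

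For the easier half of the reduction I would show that any low-order separation of $M$ yields a $k$-sum factorisation. Given a $1$- or $2$-separation of $M$, an elementary row reduction of a TU representing matrix puts it into block form, from which one reads off two regular factor matroids, and checking that they reassemble via the correct sum is routine. For $3$-separations one must be more careful, since not every $3$-separation genuinely produces a $3$-sum -- one has to exclude degenerate configurations in which no common triangle is available on both sides. Absorbing those into a refined notion of \emph{internal $4$-connectivity} is a technical but essentially bookkeeping step.

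The main obstacle, and the genuinely hard part of the proof, is the indecomposable case. The first sub-step is to show that $R_{10}$ is a \emph{splitter} for the class of regular matroids: any $3$-connected regular matroid containing $R_{10}$ as a minor must itself equal $R_{10}$. This is handled by a splitter-theorem-style case analysis of single-element extensions of $R_{10}$, verifying that none of them is regular. The deepest remaining sub-step is to prove that a $3$-connected regular matroid without an $R_{10}$-minor and without a non-trivial $3$-sum decomposition must be graphic or cographic. Seymour accomplishes this via the auxiliary matroid $R_{12}$: the presence of an $R_{12}$-minor forces a $3$-sum decomposition through its canonical $3$-separation, while the simultaneous absence of $R_{10}$- and $R_{12}$-minors, combined with Tutte's excluded-minor characterisations of graphic and cographic matroids (with forbidden minors $U_{2,4}, F_7, F_7^\ast, M^\ast(K_5), M^\ast(K_{3,3})$ and their duals), forces $M$ into one of the two classes. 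Orchestrating this minor-theoretic end-game is by far the most intricate component of the argument, and is where I would simply follow Seymour's original proof rather than attempt a shortcut.
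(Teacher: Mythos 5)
This statement is not proved in the paper at all: it is quoted verbatim as Seymour's decomposition theorem and attributed to \cite{seymregularsums}, so there is no in-paper argument to compare yours against. What you have written is a roadmap of Seymour's original proof rather than a proof: the reduction of the theorem to the statement that an (internally $4$-connected / $3$-sum-indecomposable) regular matroid is graphic, cographic, or $R_{10}$ is the right frame, and the routine parts you describe (the $k$-sums preserve regularity via block TU matrices; $1$- and $2$-separations yield $1$- and $2$-sum factorisations) are indeed routine. But every step that carries the actual weight of the theorem is named and deferred, not argued: that $R_{10}$ is a splitter for regular matroids, that an $R_{12}$-minor forces a $3$-separation of the whole matroid (this needs the nontrivial fact that the canonical $3$-separation of $R_{12}$ extends to one of $M$, not merely that $R_{12}$ itself is $3$-separated), and above all that a $3$-connected regular matroid with no $R_{10}$- or $R_{12}$-minor is graphic or cographic. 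Each of these is a major theorem resting on further machinery (the Splitter Theorem, Tutte's excluded-minor characterisations of graphic and cographic matroids), and you explicitly state you would ``simply follow Seymour's original proof'' there.

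So the honest assessment is: your outline is a correct description of the known proof strategy, but as a blind proof attempt it has a genuine gap, namely all of the minor-theoretic endgame, which you cite rather than supply. Since the paper itself treats the theorem as a black box from \cite{seymregularsums}, your write-up is in effect doing the same thing at greater length; nothing in it is wrong, but nothing in it would let a reader verify the theorem without opening Seymour's paper.
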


This theorem shows that graphic matroids, bond matroids and $R_{10}$ constitute the most important building blocks of regular matroids.
Using a brute force implementation, we checked by computer that every orientation of $R_{10}$ containing no $M^\ast(\vec{K}_{m,n})$ as a \GB-minor for any $m, n \ge 2$ such that $m+n$ is odd, is already non-even. 
We therefore expect the total list of forbidden minors for all non-even oriented matroids to not be larger than the union of the forbidden minors for graphic (\Cref{thm:forbiddengraphicminors}) and cographic (\Cref{thm:forbiddencographicminors}) non-even oriented matroids. In other words, we conjecture the following.

\begin{conjecture}
	A regular oriented matroid $M$ is non-even if and only if none of its \GB-minors is isomorphic to $M^\ast(\vec{K}_{m,n})$ for some $m, n \ge 2$ such that $m+n$ is odd.
\end{conjecture}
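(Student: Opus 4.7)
The plan is to proceed by induction using Seymour's decomposition theorem stated just above. The base cases, namely graphic matroids, bond matroids and orientations of $R_{10}$, are handled by \Cref{thm:forbiddengraphicminors}, \Cref{thm:forbiddencographicminors} and the computer verification mentioned by the authors. Fix a regular oriented matroid $\vec{M}$ such that no \GB-minor of $\vec{M}$ is isomorphic to $M^\ast(\vec{K}_{m,n})$ for any $m,n\ge 2$ with $m+n$ odd; the goal is to show that $\vec{M}$ is non-even.

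First I would reduce to the totally cyclic case: by \Cref{lemma:farkas} any element missing from every directed circuit is irrelevant to non-evenness, so I may pass to $TC(\vec{M})$ without losing anything. \Cref{lemma:circuitspace} then produces a directed circuit basis $\mathcal{B}$ of the underlying regular matroid $M$, and \Cref{cor:matroidweighting} gives a set $J$ meeting each $C \in \mathcal{B}$ in an odd number of elements. By \Cref{proposition:matroidequivalence} it suffices to prove that every directed circuit of $\vec{M}$ is the symmetric difference of an odd number of circuits from $\mathcal{B}$, or equivalently that $J$ itself is already an odd hitting set for all directed circuits.

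Next I would invoke Seymour's theorem to decompose the underlying matroid $M$ as an iterated $1$-, $2$- or $3$-sum of graphic matroids, bond matroids and copies of $R_{10}$. The inductive step requires, for a sum $M = M_1 \Delta M_2$, the following package: (a) lift the given orientation $\vec{M}$ to orientations $\vec{M}_1, \vec{M}_2$ of the summands, exploiting that regular matroids have a unique reorientation class so that this lift is essentially forced once one orients the shared elements; (b) verify that every \GB-minor of $\vec{M}_i$ arises as a \GB-minor of $\vec{M}$, so that the forbidden-minor hypothesis descends to both summands and the inductive hypothesis applies to $\vec{M}_1, \vec{M}_2$; (c) combine certifying sets $J_1 \subseteq E(\vec{M}_1)$ and $J_2 \subseteq E(\vec{M}_2)$ that witness non-evenness of the summands into a single certifying set $J \subseteq E(\vec{M})$ for $\vec{M}$, using the description of the signed circuits of $M_1 \Delta M_2$ in terms of those of $M_1$ and $M_2$ (circuits either lie in one summand or cross the shared element/triangle in a controlled way).

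The hard part will be the $3$-sum. In the $1$-sum case the circuits are disjoint and $J := J_1 \cup J_2$ works trivially; in the $2$-sum along an element $e$, every circuit of $M$ is either a circuit of one summand avoiding $e$ or the symmetric difference of two circuits, one in each $M_i$, both containing $e$, and one can show $J := J_1 \triangle J_2$ (after possibly toggling $e$) is odd on every directed circuit of $\vec{M}$ by a short parity computation using the strong orthogonality (\ref{equ:strongortho}). The $3$-sum along a triangle $T$ is more subtle: a directed circuit of $\vec{M}$ that uses $T$ splits into two signed circuits of $\vec{M}_1, \vec{M}_2$, each meeting $T$ in exactly two elements, and the parity of $|C \cap J|$ decomposes as a sum of the two summand parities plus a correction term determined by how $T$ is oriented in each summand. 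Controlling this correction term is where I expect the main difficulty to lie; the likely resolution is to exploit the freedom in reorienting each summand so that the three elements of $T$ receive compatible signs, and to prove that if no such compatible choice existed then $\vec{M}$ would already contain one of the forbidden cographic \GB-minors, reducing to \Cref{thm:forbiddencographicminors}. A secondary technical point is that \GB-contractions and deletions producing a minor of $\vec{M}$ may interact badly with a fixed Seymour decomposition, so the induction must be performed on the total number of elements rather than on a prescribed decomposition tree, re-decomposing at each step.
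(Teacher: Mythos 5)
There is a genuine gap, and in fact a decisive one: the statement you are trying to prove is not proved in the paper at all --- it is stated there as an open \emph{conjecture}. The authors only verify (by brute-force computer search) that orientations of $R_{10}$ satisfy the claimed equivalence, and they themselves point to the Seymour-decomposition strategy you describe as ``the natural way of working on this conjecture''. So your proposal is an outline of the intended attack, not a proof, and the places where you write ``the hard part will be the $3$-sum'' and ``controlling this correction term is where I expect the main difficulty to lie'' are precisely the points that remain open.

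Concretely, the steps you defer are not routine. In item (c), a directed circuit of $\vec{M} = M_1\,\Delta\,M_2$ that crosses the shared element or triangle decomposes into two \emph{signed} circuits of $\vec{M}_1$ and $\vec{M}_2$ that contain the marker elements, and these need not be \emph{directed} circuits of the summands; the certificates $J_1, J_2$ obtained from the inductive hypothesis only constrain the parities of directed circuits, so they give no control over the crossing circuits. To make the induction go through you would need a strictly stronger inductive statement (parity control over signed circuits through the markers, or a careful choice of orientation of the markers), and it is exactly the failure to find such a statement that keeps this a conjecture. Item (b) is also asserted rather than proved: even though the summands in Seymour's theorem can be taken to be minors of $M$, you must check that the relevant deletions and contractions can be realised by \GB-operations (i.e.\ that the contracted elements are butterfly-contractible in the orientation at hand), which is not automatic. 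Finally, note that your reduction via \Cref{proposition:matroidequivalence} (``it suffices to show every directed circuit is an odd sum of basis circuits'') is just a restatement of non-evenness, not a simplification. In short, the proposal reproduces the authors' suggested programme but does not close any of the gaps that make the statement open.
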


The natural way of working on this conjecture would be to try and show that a smallest counterexample is not decomposable as the $1$-, $2$- or $3$-sum of two smaller oriented regular matroids.
Apart from the obvious open problem of resolving the computational complexity of the even circuit problem (\Cref{evencircuit1}) for regular oriented matroids in general, already resolving the case of bond matroids would be interesting.

\begin{problem}
Is there a polynomially bounded algorithm that, given as input a digraph $D$, decides whether or not $D$ contains a directed bond of even size?
Equivalently, is there a polynomially bounded recognition algorithm for digraphs admitting an odd dijoin?
\end{problem}

Conclusively, given our characterisation of digraphs admitting an odd dijoin in terms of forbidden cut minors, the following question naturally comes up.

\begin{problem}
Let $F$ be a fixed digraph.
Is there a polynomially bounded algorithm that, given as input a digraph $D$, decides whether or not $D$ contains a cut minor isomorphic to $F$?
\end{problem}

\section*{Acknowledgement}

We thank Winfried Hochst\"{a}ttler and Sven Jäger for discussions on the topic.

Karl Heuer was supported by the European Research Council (ERC) under the European Union's Horizon 2020 research and innovation programme (ERC consolidator grant DISTRUCT, agreement No.\ 648527).
Raphael Steiner was funded by DFG-GRK 2434.

\begin{bibdiv}
\begin{biblist}

\bib{oraclesOMs}{article}{
	author={Bachem, A. }, 
	author={Reinhold, A.},
	title={On the complexity of the Farkas-Property of oriented matroids}, 
	date={1989},
	note={\url{http://e-archive.informatik.uni-koeln.de/65} Preprint},
}

\bib{bang-jensen}{book}{
   author={Bang-Jensen, J.},
   author={Gutin, G.},
   title={Digraphs},
   series={Springer Monographs in Mathematics},
   edition={2},
   note={Theory, algorithms and applications},
   publisher={Springer-Verlag London, Ltd., London},
   date={2009},
   pages={xxii+795},
   isbn={978-1-84800-997-4},
   review={\MR{2472389}},
   doi={10.1007/978-1-84800-998-1},
}

\bib{bibel}{book}{
   author={Bj\"{o}rner, Anders},
   author={Las Vergnas, Michel},
   author={Sturmfels, Bernd},
   author={White, Neil},
   author={Ziegler, G\"{u}nter M.},
   title={Oriented matroids},
   series={Encyclopedia of Mathematics and its Applications},
   volume={46},
   edition={2},
   publisher={Cambridge University Press, Cambridge},
   date={1999},
   pages={xii+548},
   isbn={0-521-77750-X},
   review={\MR{1744046}},
   doi={10.1017/CBO9780511586507},
}

\bib{orientability}{article}{
   author={Bland, Robert G.},
   author={Las Vergnas, Michel},
   title={Orientability of matroids},
   journal={J. Combinatorial Theory Ser. B},
   volume={24},
   date={1978},
   number={1},
   pages={94--123},
   issn={0095-8956},
   review={\MR{485461}},
   doi={10.1016/0095-8956(78)90080-1},
}

\bib{packingdicycles}{article}{
   author={Guenin, Bertrand},
   author={Thomas, Robin},
   title={Packing directed circuits exactly},
   journal={Combinatorica},
   volume={31},
   date={2011},
   number={4},
   pages={397--421},
   issn={0209-9683},
   review={\MR{2861237}},
   doi={10.1007/s00493-011-1687-5},
}

\bib{gacslovasz}{article}{
   author={G\'{a}cs, P\'{e}ter},
   author={Lov\'{a}sz, L\'{a}szl\'{o}},
   title={Khachiyan's algorithm for linear programming},
   journal={Math. Programming Stud.},
   number={14},
   date={1981},
   pages={61--68},
   issn={0303-3929},
   review={\MR{600122}},
   doi={10.1007/bfb0120921},
}

\bib{oraclesmatroids}{article}{
   author={Hausmann, D.},
   author={Korte, B.},
   title={Algorithmic versus axiomatic definitions of matroids},
   journal={Math. Programming Stud.},
   number={14},
   date={1981},
   pages={98--111},
   issn={0303-3929},
   review={\MR{600125}},
   doi={10.1007/bfb0120924},
}

\bib{johnson2001directed}{article}{
   author={Johnson, Thor},
   author={Robertson, Neil},
   author={Seymour, P. D.},
   author={Thomas, Robin},
   title={Directed tree-width},
   journal={J. Combin. Theory Ser. B},
   volume={82},
   date={2001},
   number={1},
   pages={138--154},
   issn={0095-8956},
   review={\MR{1828440}},
   doi={10.1006/jctb.2000.2031},
}

\bib{digridthm}{article}{
   author={Kawarabayashi, K.},
   author={Kreutzer, Stephan},
   title={The Directed Grid Theorem},
   booktitle = {Proceedings of the Forty-Seventh Annual ACM Symposium on Theory of Computing},
   publisher={Association for Computing Machinery},
   address = {New York, NY, USA},
   date={2015},
   pages={655–664},
   isbn={9781450335362},
   doi={10.1145/2746539.2746586},
   eprint={1411.5681}
}

\bib{khachiyan}{article}{
   author={Ha\v{c}ijan, L. G.},
   title={A polynomial algorithm in linear programming},
   language={Russian},
   journal={Dokl. Akad. Nauk SSSR},
   volume={244},
   date={1979},
   number={5},
   pages={1093--1096},
   issn={0002-3264},
   review={\MR{522052}},
}

\bib{klee1984signsolvability}{article}{
   author={Klee, Victor},
   author={Ladner, Richard},
   author={Manber, Rachel},
   title={Signsolvability revisited},
   journal={Linear Algebra Appl.},
   volume={59},
   date={1984},
   pages={131--157},
   issn={0024-3795},
   review={\MR{743051}},
   doi={10.1016/0024-3795(84)90164-2},
}

\bib{mccuaig2004polya}{article}{
   author={McCuaig, William},
   title={P\'{o}lya's permanent problem},
   journal={Electron. J. Combin.},
   volume={11},
   date={2004},
   number={1},
   pages={Research Paper 79, 83},
   review={\MR{2114183}},
   doi={10.37236/1832 },
}

\bib{manber1986digraphs}{article}{
   author={Manber, Rachel},
   author={Shao, Jia Yu},
   title={On digraphs with the odd cycle property},
   journal={J. Graph Theory},
   volume={10},
   date={1986},
   number={2},
   pages={155--165},
   issn={0364-9024},
   review={\MR{890220}},
   doi={10.1002/jgt.3190100203},
}

\bib{oxley}{book}{
   author={Oxley, James},
   title={Matroid theory},
   series={Oxford Graduate Texts in Mathematics},
   volume={21},
   edition={2},
   publisher={Oxford University Press, Oxford},
   date={2011},
   pages={xiv+684},
   isbn={978-0-19-960339-8},
   review={\MR{2849819}},
   doi={10.1093/acprof:oso/9780198566946.001.0001},
}

\bib{robertson1999permanents}{article}{
   author={Robertson, Neil},
   author={Seymour, P. D.},
   author={Thomas, Robin},
   title={Permanents, Pfaffian orientations, and even directed circuits},
   journal={Ann. of Math. (2)},
   volume={150},
   date={1999},
   number={3},
   pages={929--975},
   issn={0003-486X},
   review={\MR{1740989}},
   doi={10.2307/121059},
}

\bib{recoggraphmatroids}{article}{
   author={Seymour, P. D.},
   title={Recognizing graphic matroids},
   journal={Combinatorica},
   volume={1},
   date={1981},
   number={1},
   pages={75--78},
   issn={0209-9683},
   review={\MR{602418}},
   doi={10.1007/BF02579179},
}

\bib{seymregularsums}{article}{
   author={Seymour, P. D.},
   title={Decomposition of regular matroids},
   journal={J. Combin. Theory Ser. B},
   volume={28},
   date={1980},
   number={3},
   pages={305--359},
   issn={0095-8956},
   review={\MR{579077}},
   doi={10.1016/0095-8956(80)90075-1},
}

\bib{seymour1987characterization}{article}{
   author={Seymour, Paul},
   author={Thomassen, Carsten},
   title={Characterization of even directed graphs},
   journal={J. Combin. Theory Ser. B},
   volume={42},
   date={1987},
   number={1},
   pages={36--45},
   issn={0095-8956},
   review={\MR{872406}},
   doi={10.1016/0095-8956(87)90061-X},
}

\bib{thomassen1986sign}{article}{
   author={Thomassen, Carsten},
   title={Sign-nonsingular matrices and even cycles in directed graphs},
   journal={Linear Algebra Appl.},
   volume={75},
   date={1986},
   pages={27--41},
   issn={0024-3795},
   review={\MR{825397}},
   doi={10.1016/0024-3795(86)90179-5},
}

\bib{tutteregularchar}{article}{
   author={Tutte, W. T.},
   title={A homotopy theorem for matroids. I, II},
   journal={Trans. Amer. Math. Soc.},
   volume={88},
   date={1958},
   pages={144--174},
   issn={0002-9947},
   review={\MR{101526}},
   doi={10.2307/1993243},
}

\bib{welsh}{book}{
   author={Welsh, D. J. A.},
   title={Matroid theory},
   note={L. M. S. Monographs, No. 8},
   publisher={Academic Press [Harcourt Brace Jovanovich, Publishers], London-New York},
   date={1976},
   pages={xi+433},
   review={\MR{0427112}},
}

\end{biblist}
\end{bibdiv}

\end{document}